\documentclass[11pt]{article}
\usepackage{amsfonts}
\usepackage{graphics}
\usepackage{indentfirst}
\usepackage{color}
\usepackage{cite}
\usepackage{latexsym}
\usepackage[paper=a4paper, left=1.6cm, right=1.6cm, top=1.8cm, bottom=1.6cm, headheight=5.5pt, footskip=0.8cm, footnotesep=0.8cm, centering, includefoot]{geometry}
\usepackage{amsmath}
\allowdisplaybreaks
\usepackage{amssymb}
\usepackage[colorlinks, linkcolor=red]{hyperref}
\hypersetup{urlcolor=red, citecolor=red}
\usepackage[dvips]{epsfig}
\usepackage{amscd}

\usepackage{amsthm}
\usepackage{mathrsfs}
\usepackage{verbatim}
\newtheorem{theorem}{Theorem}[section]
\newtheorem{remark}{Remark}[section]
\newtheorem{defn}{Definition}[section]
\newtheorem{lemma}{Lemma}[section]

\newtheorem{proposition}{Proposition}[section]

\allowdisplaybreaks

\let\pa=\partial
\let\f=\frac

\def\na{\nabla}

\renewcommand{\div}{{\rm div}}

\DeclareMathOperator{\divv}{div}

\DeclareMathOperator{\curl}{curl}

\makeatletter
\@addtoreset{equation}{section}
\makeatother
\makeatletter
\@addtoreset{equation}{section}
\makeatother

\title{Global weak solutions of 3D compressible magnetohydrodynamic equations subject to large external potential forces with discontinuous initial data and vacuum
\thanks{Chen's research was partially supported by Postgraduate Research and Innovation Project of Southwest University (No. SWUS25083). Zhong's research was partially supported by National Natural Science Foundation of China (No. 12371227) and Fundamental Research Funds for the Central Universities (No. SWU--KU24001).}
}

\author{Geyuan Chen,\ Xin Zhong
{\thanks{E-mail addresses: gychen17@outlook.com (G. Chen),  xzhong1014@amss.ac.cn (X. Zhong).}}
\date{}\\
\footnotesize School of Mathematics and Statistics, Southwest University, Chongqing 400715, P. R. China}

\begin{document}
\maketitle

\begin{abstract}
We investigate the compressible magnetohydrodynamic equations subject to large external potential forces with discontinuous initial data in a three-dimensional bounded domain under Navier-slip boundary conditions. We show the global existence of weak solutions for such an initial-boundary value problem provided the initial energy is suitably small. In particular, the initial data may contain vacuum states and possibly exhibit large oscillations. To overcome difficulties brought by boundary and large external forces, some new estimates based on the effective viscous flux play crucial roles.
\end{abstract}

\textit{Key words and phrases}. Compressible magnetohydrodynamic equations;
global weak solutions; Navier-slip boundary conditions; large external potential forces; large oscillation.

2020 \textit{Mathematics Subject Classification}. 76W05; 76N10.


\section{Introduction}
\par We consider a viscous isentropic compressible magnetohydrodynamic flow in a bounded domain $\Omega\subset\mathbb{R}^3$:
\begin{align}\label{a1}
\begin{cases}
\rho_t+\divv (\rho u)=0,\\
(\rho u)_t+\divv (\rho u\otimes u)-\mu\Delta u-(\mu+\lambda)\nabla\divv u+\nabla P=(\nabla\times H)\times H+\rho f,\\
H_{t}-\nabla\times(u\times H)+\nu\nabla\times(\nabla\times H)=0,\\
\divv H=0,
\end{cases}
\end{align}
where the unknowns $\rho$, $u=(u^1, u^2, u^3)$, $P=A\rho^{\gamma}\ (A>0, \gamma>1)$, and $  H=(H^1,H^2,H^3)$ stand for the density, velocity, pressure, and magnetic field, respectively. The constants $\mu$ and $\lambda$ represent the shear viscosity and the bulk viscosity, respectively, satisfying the physical restrictions
\begin{align*}
\mu>0,\quad 2\mu+3\lambda\ge 0,
\end{align*}
while the constant $\nu>0$ is the resistivity coefficient. We assume that the external force $f$ is a gradient of a scalar potential $\phi=\phi(x)$ (i.e., $f=\nabla\phi(x)$).

The system \eqref{a1} is supplemented with the initial data
\begin{align}
(\rho, \rho u, H)(x, 0)=(\rho_{0}, \rho_0u_{0}, H_{0})(x), \quad x\in \Omega,
\end{align}
and Navier-slip boundary conditions
\begin{gather}\label{a5}
u\cdot n=0, \ \curl u\times n=0,\,
H\cdot n=0, \ \curl H\times n=0,\quad x\in\partial\Omega,\ t>0,
\end{gather}
where $n$ is the unit outward normal vector to $\pa\Omega$.

Magnetohydrodynamics (MHD) is the study of the dynamics of conductive fluids
such as plasmas in the presence of electromagnetic fields, which was established in the 1940s by H. Alfv{\'e}n \cite{A41} and was successfully applied to geophysics and astrophysics (see, e.g., \cite{D17}). Due to its significant physical relevance, complexity, diverse phenomena, and mathematical difficulties, there have been numerous studies on the global well-posedness of solutions for the compressible MHD system. We refer the reader to \cite[Chapter 3]{LT12}, which provides a detailed derivation of \eqref{a1} from the general constitutive laws.

Recently significant progress has been made concerning the global existence of solutions for the isentropic compressible MHD equations \eqref{a1} in the absence of external force. Following the classical approach introduced by Matsumura and Nishida \cite{MN80,MN83} in the 1980s for compressible viscous flows, Li--Yu \cite{LY11} and Zhang--Zhao \cite{ZZ10} independently established the global unique classical solution when the initial data is a small perturbation of a stable reference state $(\rho_\infty,0,0)$ in $H^3(\mathbb{R}^3)$ with constant positive density $\rho_\infty$.
It should be pointed out that the initial data is close to a constant state with a small $H^3$-norm, which indicates the absence of vacuum states. However, as is well-known, the possible presence of vacuum is one of the major difficulties in the study of compressible MHD equations due to the high singularity and degeneracy of the system near the vacuum region. In this direction, it was shown in \cite{LH} that there exists a unique global classical solution to the 3D Cauchy problem with small initial energy but possibly large oscillations and vacuum states at both the interior domain and the far field. In addition, the small initial energy condition was relaxed in \cite{hou} and the global unique classical solution was proved under the smallness assumption of $\big[(\gamma-1)^{\frac19}+\nu^{-\frac14}\big]E_0$, where $E_0$ is the initial energy. Apart from the whole space case, the study of the compressible system in a domain with physical boundary conditions is also important mathematically and physically. More recently, in the 3D bounded domains and exterior domains with Navier-slip boundary conditions, the global existence and uniqueness of classical solutions to \eqref{a1} in the absence of external force with the regular initial data that are of small energy but possibly large oscillations and vacuum were studied in \cite{chen,chen1}.
On the other hand, for {\it large initial data}, Hu and Wang \cite{hu1} showed the global existence and large-time behavior of finite energy weak solutions with the adiabatic constant $\gamma>\f32$ and vacuum in 3D bounded domains under Dirichlet boundary conditions with the aid of weak convergence method developed by Lions \cite{PL} and Feireisl \cite{F04}. Yet the uniqueness and regularities for such weak solutions are major open questions.

The third type of solutions for compressible MHD equations are \textit{intermediate weak solutions} except \textit{small-classical solutions} \cite{LY11,ZZ10,chen,chen1,LH,hou} and \textit{large-energy weak solutions} \cite{hu1}. More precisely, by requiring initial data small in $L^2$ and initial density positive and essentially bounded, Suen and Hoff \cite{SH} proved the global existence of such solutions to \eqref{a1} without external force in $\mathbb{R}^3$ provided the viscosity coefficients $\mu$ and $\lambda$ satisfy an additional assumption
\begin{align}\label{a4}
\mu<2\mu+\lambda<\bigg(\frac32+\frac{\sqrt{21}}{6}\bigg)\mu.
\end{align}
This momentous result was later generalized in \cite{LS} where the condition \eqref{a4} was removed and vacuum was allowed initially.
It should be emphasized that the regularities of weak solutions obtained in \cite{SH} are stronger than finite-energy weak solutions in the sense that particle path can be defined in the non-vacuum region, but weaker than the usual strong solutions in the sense that discontinuity of the density can be transported along the particle path. Moreover, the uniqueness and continuous dependence of these weak solutions were established in \cite{S20} by modifying the initial conditions. Meanwhile, the optimal time-decay rates of such solutions were obtained in \cite{WZZ21}, that is, for $t\geq1$,
\begin{align*}
\begin{cases}
\|(\rho-\tilde{\rho},u,H)(t)\|_{L^r}\leq Ct^{-\frac32(1-\frac1r)},\ 2\leq r\leq\infty,\\
\|(\nabla u,\nabla H)(t)\|_{L^2}\leq Ct^{-\frac54}, \\
\|(\nabla^2H,H_t)(t)\|_{L^2}\leq Ct^{-\frac74}, \\
\end{cases}
\end{align*}
if $\|(\rho_0-\tilde{\rho},u_0,H_0)\|_{L^1}$ is bounded.

However, all results on global intermediate weak solutions \cite{LS,WZZ21,S20,SH} only concern the whole space case or the non-vacuum case. It remains completely open for the global existence of intermediate weak solutions to the isentropic compressible MHD flows with density containing vacuum initially in general bounded domains. This motivates us to investigate global weak solutions with discontinuous initial data and vacuum to the initial-boundary value problem \eqref{a1}--\eqref{a5}.

Before stating our main result precisely, we introduce the notations and conventions throughout.
The symbol denotes the end of a proof and $A\triangleq B$ means $A=B$ by definition. We denote by $\dot{g}\triangleq g_t+u\cdot\nabla g$ the material derivative of $g$ and write
\begin{align*}
\int fdx\triangleq\int_\Omega fdx, ~~\bar{f}\triangleq\frac{1}{|\Omega|}\int_\Omega fdx.
\end{align*}
For \(1 \leq p \leq \infty\) and \(0 \leq k \in \mathbb{Z}\), the standard Sobolev spaces are defined as follows
\[
\begin{cases}
L^p \triangleq L^p(\Omega), \ W^{k,p} \triangleq W^{k,p}(\Omega), \ H^k \triangleq W^{k,2}, \\
H_{\omega}^1\triangleq \{v\in H^1: (v \cdot n)|_{\partial\Omega}= (\curl v\times n)|_{\partial\Omega}= 0\}, \\
D^1 \triangleq \{v\in L^1_{\text{loc}}: \|\nabla v\|_{L^2}<\infty\},\ D^{1,p} \triangleq \{ v\in L^1_{\text{loc}}: \|\nabla v\|_{L^p}<\infty\}.
\end{cases}
\]
It is natural to expect an equilibrium state with density $\rho_s=\rho_s(x)$, velocity $u_s=u_s(x)$, and magnetic field $H_s=H_s(x)$ for the initial-boundary value problem \eqref{a1}--\eqref{a5}, which is a solution of
\begin{align}\label{1.8}
\begin{cases}
\divv(\rho_s u_s) = 0, & x\in\Omega, \\
\rho_s u_s\cdot\nabla u_s -\mu \Delta u_s - (\lambda + \mu) \nabla \divv u_s+ H_s\cdot\nabla H_s- \frac{1}{2}\nabla|H_s|^2 + \nabla P(\rho_s) = \rho_s \nabla \phi, & x\in\Omega, \\
-\nu\Delta H_s= H_s\cdot\nabla u_s- u_s\cdot\nabla H_s- H_s\divv u_s, & x\in\Omega,\\
\divv H_s= 0, & x\in\Omega,\\
u_s \cdot n = 0, \ \curl u_s \times n = 0, \
H_s \cdot n = 0, \ \curl H_s \times n = 0, & x\in\partial \Omega, \\
\int_{\Omega} \rho_s \, dx = \int_{\Omega} \rho_0 \, dx.
\end{cases}
\end{align}
Indeed, we have the following conclusion.
\begin{lemma}[\!\!{\cite{LY}}]\label{lem:a1}
Assume that $\Omega\subset\mathbb{R}^3$ is a bounded domain with smooth boundary and $\phi$ satisfies
\begin{equation}\label{1.9}
\phi \in H^2, \quad  \int \left(\frac{\gamma -1}{A \gamma} \Big(\phi - \inf\limits_{\Omega} \phi\Big) \right)^{\frac{1}{\gamma - 1}} \, dx < \int \rho_0 \, dx,
\end{equation}
then there exists a unique solution $(\rho_s, 0, 0)$ of \eqref{1.8} such that
\begin{equation}
\rho_s \in H^2, \quad 0 < \underline{\rho} \leq \inf\limits_{\Omega} \rho_s  \leq \sup\limits_{\Omega} \rho_s \leq \bar\rho,
\end{equation}
where $\underline{\rho}$ and $\bar\rho$ are positive constants depending only on $A$, $\gamma$, $\inf\limits_{\Omega} \phi$, and $\sup\limits_{\Omega} \phi$. In addition, if $\phi \in W^{2, q}$ for some $q \in (3, 6)$, then
\begin{equation}
\|\rho_s\|_{W^{2, q}} \leq C,
\end{equation}
where $C$ is a positive constant depending only on $A$, $\gamma$, $\inf\limits_{\Omega} \phi$, and $\|\phi\|_{W^{2, q}}$.
\end{lemma}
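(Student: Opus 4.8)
The plan is to reduce the stationary system \eqref{1.8} to a scalar hydrostatic balance. Choosing $u_s\equiv 0$ and $H_s\equiv 0$, the continuity equation, the magnetic equation, the constraint $\divv H_s=0$, and all the boundary conditions in \eqref{1.8} are satisfied trivially, while the momentum equation collapses to $\nabla P(\rho_s)=\rho_s\nabla\phi$ in $\Omega$, to be solved together with $\int\rho_s\,dx=\int\rho_0\,dx$. Introducing the enthalpy $h(\rho)\triangleq\frac{A\gamma}{\gamma-1}\rho^{\gamma-1}$, so that $\nabla h(\rho_s)=\rho_s^{-1}\nabla P(\rho_s)$ wherever $\rho_s>0$, the balance becomes $\nabla\big(h(\rho_s)-\phi\big)=0$, and since $\Omega$ is connected this forces $h(\rho_s)=\phi+C$ for some constant $C$, that is,
\begin{equation*}
\rho_s=\left(\frac{\gamma-1}{A\gamma}\,(\phi+C)\right)^{\frac{1}{\gamma-1}}.
\end{equation*}

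It then remains to fix $C$ by the mass constraint. For $C\ge-\inf_{\Omega}\phi$, set $\rho_s^{C}\triangleq\big(\frac{\gamma-1}{A\gamma}(\phi+C)_{+}\big)^{1/(\gamma-1)}\ge0$ and $I(C)\triangleq\int_{\Omega}\rho_s^{C}\,dx$, which is finite because $\phi\in H^{2}\hookrightarrow L^{\infty}$ in three dimensions. The map $I$ is continuous, nondecreasing, and strictly increasing on $[-\inf_{\Omega}\phi,\infty)$, with $I(-\inf_{\Omega}\phi)$ equal to the left-hand side of \eqref{1.9} — hence $I(-\inf_{\Omega}\phi)<\int\rho_0\,dx$ by hypothesis — and $I(C)\to\infty$ as $C\to\infty$; so by the intermediate value theorem there is a unique $C_{0}>-\inf_{\Omega}\phi$ with $I(C_{0})=\int\rho_0\,dx$, and uniqueness of $C_{0}$ yields uniqueness of $\rho_s\triangleq\rho_s^{C_{0}}$. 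Since $C_{0}>-\inf_{\Omega}\phi$ we have $\phi+C_{0}\ge\inf_{\Omega}\phi+C_{0}>0$ on $\overline{\Omega}$, so $\rho_s$ stays away from vacuum; putting $\underline{\rho}\triangleq\big(\frac{\gamma-1}{A\gamma}(\inf_{\Omega}\phi+C_{0})\big)^{1/(\gamma-1)}$ and $\bar\rho\triangleq\big(\frac{\gamma-1}{A\gamma}(\sup_{\Omega}\phi+C_{0})\big)^{1/(\gamma-1)}$ gives $0<\underline{\rho}\le\rho_s\le\bar\rho$, and, $\phi+C_{0}$ being bounded below by a positive constant, one checks directly that $\rho_s$ solves $\nabla P(\rho_s)=\rho_s\nabla\phi$ in the distributional sense, so $(\rho_s,0,0)$ is a solution of \eqref{1.8}.

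For the regularity I would write $\rho_s=g(\phi)$ with $g(t)\triangleq\big(\frac{\gamma-1}{A\gamma}(t+C_{0})\big)^{1/(\gamma-1)}$, which, together with its derivatives, is bounded on a neighbourhood of the range of $\phi$ because $t+C_{0}$ stays away from $0$ there. Then $\nabla\rho_s=g'(\phi)\nabla\phi$ and $\nabla^{2}\rho_s=g''(\phi)\,\nabla\phi\otimes\nabla\phi+g'(\phi)\,\nabla^{2}\phi$, so that $\nabla\phi\in H^{1}\hookrightarrow L^{6}$ and $\nabla^{2}\phi\in L^{2}$ give $\nabla^{2}\rho_s\in L^{2}$, hence $\rho_s\in H^{2}$; if in addition $\phi\in W^{2,q}$ with $q\in(3,6)$, then $W^{2,q}\hookrightarrow C^{1}$, so $\nabla\phi\in L^{\infty}$, whence $g''(\phi)\nabla\phi\otimes\nabla\phi\in L^{\infty}\subset L^{q}$ and $g'(\phi)\nabla^{2}\phi\in L^{q}$, i.e.\ $\rho_s\in W^{2,q}$. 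The only genuinely non-routine point — and where I would concentrate the work — is the bookkeeping that turns the above into the \emph{explicit} bounds asserted in the lemma: one must use the mass identity $I(C_{0})=\int\rho_0\,dx$ to localise $C_{0}$ and then carry it, together with the relevant embedding constants, through the formulas for $\underline{\rho}$, $\bar\rho$, and $\|\rho_s\|_{W^{2,q}}$ to obtain the claimed quantitative control. Everything else is the elementary integration of the hydrostatic balance on a connected domain plus standard composition and Sobolev embedding estimates.
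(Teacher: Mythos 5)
The paper itself does not prove this lemma; it is imported verbatim from \cite{LY}, so there is no in-paper argument to compare against. That said, your reconstruction is the standard and essentially correct route: set $u_s\equiv0$, $H_s\equiv0$, reduce the momentum balance to the hydrostatic identity $\nabla\bigl(h(\rho_s)-\phi\bigr)=0$ with $h(\rho)=\tfrac{A\gamma}{\gamma-1}\rho^{\gamma-1}$, use connectedness of $\Omega$ to integrate to $h(\rho_s)=\phi+C$, and then pin down the constant $C$ via the monotone map $C\mapsto I(C)$ and the intermediate value theorem, where the strict inequality in \eqref{1.9} is exactly what guarantees $C_0>-\inf_\Omega\phi$ and hence $\rho_s\ge\underline\rho>0$. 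The $H^2$ and $W^{2,q}$ regularity then follow from $\rho_s=g(\phi)$ by the chain rule together with $H^1(\Omega)\hookrightarrow L^6(\Omega)$ and (for $q>3$) $W^{2,q}(\Omega)\hookrightarrow C^1(\overline\Omega)$, exactly as you outline. This is the same scalar ODE-in-$x$ reduction that \cite{LY} uses.

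Two small points you should be aware of when doing the deferred ``bookkeeping.'' First, the constant $C_0$ is determined by $I(C_0)=\int\rho_0\,dx$, so $\underline\rho$ and $\bar\rho$ really depend on $\int\rho_0\,dx$ and on $|\Omega|$ as well, not solely on $A$, $\gamma$, $\inf_\Omega\phi$, $\sup_\Omega\phi$ as the lemma's phrasing suggests; the localization of $C_0$ goes through the two-sided estimate
\begin{equation*}
|\Omega|\Bigl(\tfrac{\gamma-1}{A\gamma}\bigl(\inf_\Omega\phi+C_0\bigr)\Bigr)^{\frac1{\gamma-1}}\le\int\rho_0\,dx\le|\Omega|\Bigl(\tfrac{\gamma-1}{A\gamma}\bigl(\sup_\Omega\phi+C_0\bigr)\Bigr)^{\frac1{\gamma-1}},
\end{equation*}
and those extra parameters cannot be avoided (this is a small imprecision in the cited statement, not a flaw in your argument). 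Second, in the regularity step it is worth saying explicitly that $g$ is $C^\infty$ on an interval $[\inf_\Omega\phi+C_0,\,\sup_\Omega\phi+C_0]$ strictly contained in $(0,\infty)$, so $g'$, $g''$ are bounded on the range of $\phi$ with bounds controlled by $\underline\rho$; this is what makes the composition estimates uniform. With those two refinements your proposal is complete.
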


It should be noted that the equilibrium density $\rho_s$ is in fact a solution of the rest state equations
\begin{align}
\begin{cases}\label{state density}
\nabla P(\rho_s) = \rho_s \nabla \phi,\ x\in\Omega, \\
\int \rho_s \, dx = \int \rho_0 \, dx.
\end{cases}
\end{align}
We denote the initial total energy of \eqref{a1} as
\begin{equation}
C_0 \triangleq \int \bigg(\frac{1}{2} \rho_0 |u_0|^2 + G(\rho_0, \rho_s)+ \f12|H_0|^2\bigg) \, dx,
\end{equation}
where $G(\rho, \rho_s)$ is the potential energy density given by
\begin{equation}
G(\rho, \rho_s) \triangleq \rho \int_{\rho_s}^{\rho} \frac{P(\xi) - P(\rho_s)}{\xi^2} \, d\xi.
\end{equation}
It is easy to check that there exists a positive constant $C= C(\underline\rho, \bar\rho)$ such that
\begin{align}
C^{-1}(\rho -\rho_s)^2 \leq G(\rho) \leq C(\rho -\rho_s)^2.
\end{align}

We recall the weak solutions of \eqref{a1}--\eqref{a5} as defined in \cite{SH}.
\begin{defn}\label{def:weak-solution}
A triple $(\rho, u, H)$ is said to be a global weak solution of \eqref{a1}--\eqref{a5} provided that
$(\rho - \rho_s, \rho u, H) \in C([0, \infty); H^{-1}(\Omega))$,
$\nabla u \in L^2(\Omega\times(0,\infty))$,
$H \in L^\infty(0, \infty; L^2) \cap L^2(0, \infty; D^1)$, $\phi$ satisfies \eqref{1.9}, and  $\divv H(\cdot, t) = 0$ in $\mathcal{D}'(\Omega)$ for $t > 0$.
Moreover, the following identities hold for any test function $\psi \in \mathcal{D}(\Omega \times (t_1, t_2))$ with $t_2 \geq t_1 \geq 0$ and $j = 1, 2, 3$,
\begin{gather}
\left.\int \rho \psi(x, t) dx\right|_{t_1}^{t_2} = \int_{t_1}^{t_2} \int \left(\rho \psi_t + \rho u \cdot \nabla \psi\right) dx dt, \label{eq:weak1} \\
\left.\int \rho u^j \psi(x, t) dx\right|_{t_1}^{t_2} + \int_{t_1}^{t_2} \int \left(\mu \nabla u^j \cdot \nabla \psi + (\mu + \lambda)(\divv u) \psi_{x_j}\right)dx dt \notag\\
= \int_{t_1}^{t_2} \int \Big(\rho u^j \psi_t + \rho u^j u \cdot \nabla \psi + P(\rho) \psi_{x_j} + \frac{1}{2}|H|^2 \psi_{x_j} - H^j H \cdot \nabla \psi+\rho\phi_{x_j}\psi\Big) dx dt, \label{eq:weak2} \\
\left.\int H^j \psi(x, t) dx\right|_{t_1}^{t_2} = \int_{t_1}^{t_2} \int \left(H^j \psi_t + H^j u \cdot \nabla \psi - u^j H \cdot \nabla \psi - \nu \nabla H^j \cdot \nabla \psi\right) dx dt. \label{eq:weak3}
\end{gather}
\end{defn}

We now state our main result on the global existence of weak solutions.
\begin{theorem}\label{thm:main}
Let $\Omega$ be a simply connected bounded domain in $\mathbb{R}^3$ and its smooth boundary $\partial\Omega$ has a finite number of 2-dimensional connected components. Let $\phi$ satisfy \eqref{1.9} with $\rho_s$ being the steady state density providing \eqref{state density}.
For given positive numbers $M_1, M_2$ (not necessarily small) and $\hat{\rho} \geq \bar\rho + 1$, assume that the initial data $(\rho_0, u_0, H_0)$ satisfies
\begin{align}
\begin{cases}
0 \leq \inf \rho_0 \leq \sup \rho_0 \leq \hat{\rho},\
(u_0, H_0)\in H_{\omega}^1, \ \divv H_0 = 0,\\
 \|\nabla u_0\|_{L^2}^2 \leq M_1,\ \|\nabla H_0\|_{L^2}^2 \leq M_2.
\end{cases}
\end{align}
There exists a positive constant \(\varepsilon\) depending on $\mu$, $\lambda$, $\nu$, $\gamma$, A, $\hat{\rho}$, $\Omega$, $\|\phi\|_{H^2}$, $\inf\limits_{\Omega} \phi$, $M_1$, and $ M_2$ such that if
\begin{equation}
C_0 \leq \varepsilon,
\end{equation}
then there is a global weak solution \((\rho, u, H)\) to \eqref{a1}--\eqref{a5} in the sense of Definition \ref{def:weak-solution} verifying
\begin{equation}
0 \leq \rho(x, t) \leq 2\hat{\rho}, \quad \forall x \in \Omega, \ t \geq 0.
\end{equation}
Moreover, for any $r\in [1,\infty)$, $p\in [1,6]$, and $t>0$, there exist positive constant $C$ depending only on $\mu$, $\lambda$, $\nu$, $\gamma$, A, $\hat{\rho}$, $\Omega$, $\|\phi\|_{H^2}$, $\inf\limits_{\Omega} \phi$, $M_1$, $ M_2$, $r$, and $p$ such that
\begin{equation}
\lim_{t\rightarrow\infty}\int\big(|\rho-\rho_s|^r+\sqrt\rho|u|^4+|H|^p\big)dx=0 .\label{1.21}
\end{equation}
where $r\in(2,\infty)$ and $p\in(2,6].$
\end{theorem}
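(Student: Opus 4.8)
The plan is to use the standard three-step programme for Hoff-type intermediate weak solutions of compressible flows: (i) regularize and solve approximate problems with strictly positive density; (ii) derive a priori estimates uniform in time \emph{and} in the regularization; (iii) pass to the limit by compactness. Each step must be adapted to the non-constant equilibrium $\rho_s$ forced by the large potential $\rho\nabla\phi$, to the Navier-slip boundary, and to the magnetic coupling; the effective viscous flux, modified to absorb $P(\rho_s)$ and $\tfrac12|H|^2$, is the organizing tool. For (i), I would mollify $(\rho_0,u_0,H_0)$ to obtain smooth data $(\rho_0^\delta,u_0^\delta,H_0^\delta)$ with $\delta\le\rho_0^\delta\le\hat\rho+\delta$, $\divv H_0^\delta=0$, $(u_0^\delta,H_0^\delta)\in H^1_\omega$, $\|\nabla u_0^\delta\|_{L^2}^2\le M_1$, $\|\nabla H_0^\delta\|_{L^2}^2\le M_2$, and $C_0^\delta\le 2C_0$, then invoke the local existence theory for strong solutions to \eqref{a1}--\eqref{a5} with positive density and Navier-slip conditions (as developed for the vacuum-free and whole-space cases) to obtain $(\rho^\delta,u^\delta,H^\delta)$ on a maximal interval $[0,T^\delta)$. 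The whole of (ii) then serves one purpose: to show that the smallness of $C_0$ propagates, in particular that $\sup_{[0,T^\delta)}\|\rho^\delta\|_{L^\infty}\le 2\hat\rho$, which forces $T^\delta=\infty$ with all bounds independent of $\delta$.

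The estimates in (ii), in order, are: \textbf{(1)} the basic energy balance---test the momentum equation with $u$ and the induction equation with $H$, discard boundary integrals using \eqref{a5} and $\divv H=0$, and turn the work of $\rho\nabla\phi$ into an exact time derivative via the continuity equation and the rest-state relation \eqref{state density}---yielding
\begin{align*}
&\sup_{t\ge0}\int\Big(\tfrac12\rho|u|^2+G(\rho,\rho_s)+\tfrac12|H|^2\Big)dx\\
&\qquad+\int_0^\infty\!\!\int\big(\mu|\nabla u|^2+(\mu+\lambda)(\divv u)^2+\nu|\curl H|^2\big)\,dxdt\le CC_0;
\end{align*}
\textbf{(2)} bounds on the effective viscous flux $F\triangleq(2\mu+\lambda)\divv u-\big(P(\rho)-P(\rho_s)\big)-\tfrac12|H|^2$, which satisfies $\mu\nabla\times\curl u-\nabla F=\rho\dot u-H\cdot\nabla H-(\rho-\rho_s)\nabla\phi$ and hence an elliptic (Neumann-type) problem with data inherited from \eqref{a5}, giving $\|\nabla F\|_{L^2}+\|\nabla\curl u\|_{L^2}\lesssim\|\rho\dot u\|_{L^2}+\||H||\nabla H|\|_{L^2}+\|(\rho-\rho_s)\nabla\phi\|_{L^2}+(\text{boundary})$ together with the accompanying $L^6$ bounds---here the $P(\rho_s)$-correction and the extra boundary and potential terms are absorbed using the energy bound and the smallness of $C_0$; \textbf{(3)} the first-order weighted estimate---test the momentum equation with $\dot u$ and the induction equation with $H_t$ and $\Delta H$, handle the boundary integrals through the geometry of $\partial\Omega$ and the inequality $\|\nabla v\|_{L^2}\lesssim\|\curl v\|_{L^2}+\|\divv v\|_{L^2}$ valid on $H^1_\omega$ under the stated topological hypotheses on $\Omega$---obtaining, with $\sigma=\min\{1,t\}$, $\sup_t\big(\sigma\|\sqrt\rho\dot u\|_{L^2}^2+\|\nabla H\|_{L^2}^2\big)+\int_0^\infty\sigma\big(\|\nabla\dot u\|_{L^2}^2+\|H_t\|_{L^2}^2+\|\nabla^2H\|_{L^2}^2\big)dt\le C$ schematically; \textbf{(4)} the density upper bound---express the continuity equation along particle trajectories as a scalar ODE of the form $\tfrac{d}{dt}\theta(\rho)=-\tfrac{1}{2\mu+\lambda}F+(\text{lower order})$ and run a Zlotnik/Hoff comparison argument, using the $L^1_tL^\infty_x$-type control of $F$ from (2)--(3) and the smallness of $C_0$, to upgrade $\sup_t\|\rho\|_{L^\infty}\le 2\hat\rho$ and thereby close the bootstrap; \textbf{(5)} propagate these into the time-uniform bounds $\sup_t(\|\nabla u\|_{L^2}^2+\|\nabla H\|_{L^2}^2)\le C$ and the integrability over $(1,\infty)$ of $\|\nabla\dot u\|_{L^2}^2$, $\|H_t\|_{L^2}^2$, $\|\nabla^2H\|_{L^2}^2$, $\big|\tfrac{d}{dt}\|\nabla u\|_{L^2}^2\big|$ and $\big|\tfrac{d}{dt}\|\nabla H\|_{L^2}^2\big|$.

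For (iii), the $\delta$-uniform bounds yield weak-$*$ limits $(\rho,u,H)$; the only non-routine point is the strong convergence of $\rho^\delta$ needed for the pressure, which I would obtain from the effective-viscous-flux identity together with the renormalized continuity equation in the Lions--Feireisl fashion (or, in the Hoff spirit, from the bound on $\rho^\delta\dot u^\delta$ plus an Aubin--Lions / div--curl argument), giving $\rho^\delta\to\rho$ a.e. and in $C([0,T];L^p)$ for every $p<\infty$; the magnetic terms pass using $H^\delta$ bounded in $L^\infty_tL^2_x\cap L^2_tD^1_x$ and $H^\delta_t$ in a space of negative order, and one then reads off \eqref{eq:weak1}--\eqref{eq:weak3} and $0\le\rho\le 2\hat\rho$. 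The large-time behavior \eqref{1.21} follows from (5) via the elementary fact that a nonnegative $g$ with $\int_1^\infty g\,dt<\infty$ and $\int_1^\infty|g'|\,dt<\infty$ satisfies $g(t)\to0$: apply it with $g=\|\nabla u\|_{L^2}^2$ (using the energy bound and $\int_1^\infty\|\nabla\dot u\|_{L^2}^2\,dt<\infty$) to get $\|\nabla u(t)\|_{L^2}\to 0$, whence $\int\sqrt\rho|u|^4\,dx\to 0$ by Sobolev embedding, a weighted Poincar\'e inequality, and the $L^\infty$ bound on $\rho$; similarly $\|\nabla H(t)\|_{L^2}\to 0$ and, since $\Omega$ is simply connected with the stated boundary structure, $\|H(t)\|_{L^2}\to 0$, so $\int|H(t)|^p\,dx\to 0$ for $p\in(2,6]$ by interpolation; finally $\|\rho-\rho_s\|_{L^2}\to 0$---obtained from the flux representation, the decay of $\divv u$ and $|H|^2$ in $L^2$, and the mass constraint $\int(\rho-\rho_s)\,dx=0$---and hence $\int|\rho-\rho_s|^r\,dx\to 0$ for $r\in(2,\infty)$ by interpolation against the $L^\infty$ bound.

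I expect the main obstacle to be step (ii)(4), the \emph{time-independent} density upper bound, which must be secured in the simultaneous presence of large oscillations, vacuum, a large non-constant potential force---so that $\rho_s$ is genuinely non-constant and must be threaded through both the definition of $F$ and the density ODE, whose forcing then carries $\rho_s$- and $\nabla\phi$-dependent corrections---and the Navier-slip boundary, which produces boundary integrals in the $\dot u$-, $H_t$- and $\Delta H$-testing of step (ii)(3) that have no whole-space analogue and must be absorbed using the curvature of $\partial\Omega$. The magnetic coupling compounds all of this through the quadratic terms $H\cdot\nabla H$ and $|H|^2$, which have to be dominated by the dissipation and the $\sigma$-weighted bounds---and ultimately by the smallness of $C_0$---rather than by any smallness of $H$ itself.
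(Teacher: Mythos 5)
Your plan matches the paper's framework in broad strokes---mollify to positive density, derive time- and $\delta$-uniform {\it a priori} estimates organized around the effective viscous flux $F$, close the density bound by a Zlotnik comparison, pass to the limit, and deduce decay---and the Hoff-style H\"older-continuity/Ascoli--Arzel\`a compactness for $u^\delta,H^\delta$ together with Lions-type strong convergence of $\rho^\delta$ is exactly what the paper does in Section 4. However, there is a concrete missing ingredient that makes your closure of step (ii) incomplete and your decay argument for $\rho-\rho_s$ circular: nowhere do you produce the time-\emph{integrated} pressure bound $\int_0^\infty\|\rho-\rho_s\|_{L^2}^2\,dt\le CC_0$. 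The paper obtains this in Lemma~\ref{lem:3.4} (estimate~\eqref{3.64}) by testing the momentum equation with $\rho_s^{-1}\mathcal{B}[\rho-\rho_s]$, where $\mathcal{B}$ is the Bogovskii operator of Lemma~\ref{lem:2.2}, expanding $P-P(\rho_s)$ to second order and exploiting the equilibrium relation $\nabla P(\rho_s)=\rho_s\nabla\phi$; the paper flags this as central. Without it, the term $\int_0^T\sigma^2\|P-P(\rho_s)\|_{L^4}^4\,dt$ that appears on the right of the $A_1$-estimate (see \eqref{3.30} and \eqref{3.68}) cannot be bounded uniformly in $T$ by the energy estimate alone---the latter gives only $\sup_t\|\rho-\rho_s\|_{L^2}^2\lesssim C_0$, hence a bound linear in $T$---so the bootstrap $A_1(T)+A_2(T)\le C_0^{1/2}$ never closes. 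The same gap reappears in your large-time argument: you propose to get $\|\rho-\rho_s\|_{L^2}\to 0$ ``from the flux representation, the decay of $\divv u$ and $|H|^2$,'' but the flux $F=(2\mu+\lambda)\divv u-(P-P(\rho_s))-\tfrac12|H|^2$ contains $P-P(\rho_s)$ itself, and the $L^2$ bound for $F$ supplied by the elliptic theory is again expressed in terms of $\|\rho-\rho_s\|_{L^2}$---so this is circular. The standard way out, and the one the paper uses (citing \cite{huang}), is precisely to combine $\int_0^\infty\|\rho-\rho_s\|_{L^2}^2\,dt<\infty$ with a bound on $\big|\tfrac{d}{dt}\|\rho-\rho_s\|_{L^2}^2\big|$ in $L^1(1,\infty)$. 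You should insert the Bogovskii-operator estimate as an explicit step in (ii) and route the density decay through it.

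Two smaller discrepancies worth noting: the paper's bootstrap is phrased as a three-quantity iteration on $A_1, A_2, A_3$ with a separate short-time estimate $A_3(\sigma(T))\le 2K$ depending on $M_1,M_2$ (Lemma~\ref{lem:2}), whereas your step (ii)(3) compresses this into a single weighted estimate; the separation matters because $A_3$ on $[0,\sigma(T)]$ is controlled by the \emph{sizes} $M_1, M_2$ of the initial gradients, not by $C_0$, and this is what lets one allow large $\|\nabla u_0\|_{L^2}$, $\|\nabla H_0\|_{L^2}$. Also, for the Zlotnik step your ODE $\tfrac{d}{dt}\theta(\rho)=-\tfrac{1}{2\mu+\lambda}F+\cdots$ should be split into the regimes $t\le\sigma(T)$ and $t\ge\sigma(T)$ with different choices of $(N_0,N_1,\xi^*)$, as in Lemma~\ref{lem:3.8}, because the $\sigma$-weights only control $\|F\|_{L^\infty}$ away from $t=0$.
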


\begin{remark}
It should be noted that there is no restriction on the potential force $\phi$ except \eqref{1.9} in Theorem \ref{thm:main}. Moreover, both large oscillations of the solutions and vacuum states are allowed.
\end{remark}

\begin{remark}
Our Theorem \ref{thm:main} generalizes the Cauchy problem \cite{LS} to the case of bounded domains. However, this is a non-trivial generalization because the analysis involves handling a number of surface integrals arising from the boundary. In addition, the presence of large external force introduces further difficulties.
\end{remark}

\begin{remark}
The motivation for considering large external forces in our study is as follows. As stated by Li and Qin \cite{LT12}, for planetary-scale flows (such as in the atmosphere and ocean), the Coriolis force arising from rotation and the pressure gradient force constitute the dominant balance, while the inertial force acts only as a mere modification on this scale. Neglecting the dominance of these forces makes it impossible to capture the most essential characteristics of such flows.
\end{remark}

We now outline the core framework and main mathematical difficulties in proving Theorem \ref{thm:main}.
Our strategy is to extend the standard local classical solutions (with strictly positive initial density) globally in time provided the initial energy is suitably small, and then let the lower bound of the initial density go to zero.
To this end, the pivotal aspect of our analysis is to derive uniform {\it a priori} estimates, independent of the lower bound of the density $\rho$.
In contrast to the Cauchy problem in \cite{LS,LH}, our setting features both a physical boundary and a meaningful external force, thus requiring some new observations and approaches.

The first challenge arises from the non-trivial external force, which will impose major difficulties due to its powerful effects including magnetic fields on the dynamic motion of compressible flows.
On the one hand, with the aid of the effective viscous flux
\begin{align*}
F\triangleq (2\mu+\lambda)\divv u-(P-P(\rho_s))-\f12|H|^2,
\end{align*}
we can rewrite equation $\eqref{a1}_2$ into
\begin{align}\label{1.23}
\rho\dot u-(\rho- \rho_s)\nabla \phi= \nabla F- \mu\curl\curl u+ H\cdot\nabla H,
\end{align}
which combined with the boundary conditions \eqref{a5} allows us to obtain $L^p$-estimates for $\nabla F$. The elliptic theory also gives the bound of
$\|\nabla \curl u\|_{L^p}$, which exhibits higher regularity compared to the weak solutions obtained in \cite{LH}.
Furthermore, this together with the inequality
\begin{align*}
\|\nabla u\|_{W^{k, q}} \leq C(\|\divv u\|_{W^{k, q}}+ \|\curl u\|_{W^{k, q}}) \ \ \text{for any} \ q > 1, k \geq 0
\end{align*}
shown in \cite{ZLZ20} for simply connected domains enables the derivation of estimates for $\|\nabla u\|_{L^p}$ after the nonlinear terms in $\eqref{a1}_3$ like $\nabla\times(u\times H)$ and $\nabla\times(\nabla\times H)$ being properly addressed.
On the other hand, inspired by \cite{HLX}, we multiply $\eqref{a1}_2$ by $\rho_s^{-1}\mathcal{B}[\rho - \rho_s]$ (see Lemma \ref{lem:2.2}) to expand the pressure term $P$ into the Taylor series of order 2 (see \eqref{3.65}--\eqref{3.66}), which confers the global integrability
and certain smallness property on $\|\rho- \rho_s\|_{L^2(\Omega\times(0, T))}$ (see \eqref{3.64}).
This plays an important role in our analysis and, moreover, gives a description of the decay
rate of the density $\rho$.

The second difficulty stems from the presence of boundary, which gives rise to several boundary integral terms like $\int_{\partial\Omega}F_t(\dot u\cdot n)dS$ and $\int_{\partial\Omega} F \dot u\cdot\nabla u\cdot n dS$. Such boundary terms do not admit direct estimations, as the trace theorem forces the use of higher-order derivatives in the interior.
To overcome it, following \cite{CGC}, the condition $(u\cdot n)|_{\partial\Omega}= 0$ yields
\begin{equation*}
u\cdot\nabla u\cdot n= -u\cdot \nabla n\cdot u, \quad \text{on} \ \partial \Omega,
\end{equation*}
and further
\begin{equation*}
(\dot u+ (u\cdot\nabla n)\times u^{\bot})\cdot n= 0,  \quad \text{on} \ \partial \Omega
\end{equation*}
with $u^{\bot}= -u\times n$, which circumvents the appearance of partial first-order derivatives in the boundary integral (see, e.g., \eqref{3.37}--\eqref{3.40}).

From \eqref{1.23} and the elliptic equation of $F$ (see \eqref{ee1}),
our goal is the bounds of $\|\nabla u\|_{L^p}$ and $\|\nabla H\|_{L^p}$,
and time-weighted bounds of $\|\rho\dot u\|_{L^p}$ and $\|H_t\|_{L^p}$ motivated by previous work
\cite{CGC,LS,chen,hou}, which leads us to make the {\it a priori hypothesis} (see \eqref{3.1}--\eqref{3.3}). Having above estimates at hand, we get
the desired bound of $\|F\|_{L^\infty}$. Then, we can obtain the key upper bound of density by using the Zlotnik inequality (see Lemma \ref{lem:2.8}) and thus close the {\it a priori hypothesis}
in a standard manner. It should be emphasized that the effective viscous flux plays an essential role in our analysis.

The rest of the paper is arranged as follows. In the next section, we recall some known facts and elementary inequalities which will be used later. Section \ref{sec3} is devoted to obtaining the {\it a priori} estimates, while the proof of Theorem \ref{thm:main} is given in Section \ref{sec4}.

\section{Preliminaries}\label{sec2}
 In this section we recall some known facts and elementary inequalities that will be used later. We start with the following Gagliardo--Nirenberg inequality (see \cite[Lemma 2.3]{LWZ}).
\begin{lemma}\label{lem:2.1}
(Gagliardo--Nirenberg inequality, special case).
Assume that $\Omega$ is a bounded Lipschitz domain in $\mathbb{R}^3$. For $p\in [2, 6]$, $q\in (1, \infty)$, and $r\in (3, \infty)$, there exist generic constants $C_i>0\ (i\in\{1,2,3,4\})$ which may depend only on $p$, $q$, $r$, and $\Omega$ such that, for any $f\in H^1$ and $g\in L^q\cap D^{1, r}$,
\begin{gather}
\|f\|_{L^p}\le C_1\|f\|_{L^2}^\f{6-p}{2p}\|\na f\|_{L^2}^\f{3p-6}{2p}+C_2\|f\|_{L^2},\label{f1}\\
\|g\|_{L^\infty}\le C_3\|g\|_{L^q}^{\frac{q(r-3)}{3r+q(r-3)}}\|\na g\|_{L^r}^{\frac{3r}{3r+q(r-3)}}+C_4\|g\|_{L^2}.\label{f2}
\end{gather}
Moreover, if $\int_{\Omega}f(x)dx=0$ or $(f\cdot n)|_{\partial\Omega}=0$ or $(f\times n)|_{\partial\Omega}=0$, we can choose $C_2=0$. Similarly, the constant $C_4=0$ provided $\int_{\Omega}g(x)dx=0$ or $(g\cdot n)|_{\partial\Omega}=0$ or $(g\times n)|_{\partial\Omega}=0$.
\end{lemma}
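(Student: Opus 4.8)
The plan is to reduce both estimates to classical Sobolev and Morrey embeddings on the bounded Lipschitz domain $\Omega$, combined with H\"older interpolation, and then to delete the lower-order terms under the listed conditions via Poincar\'e-type inequalities.

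For \eqref{f1} I would start from the embedding $H^1(\Omega)\hookrightarrow L^6(\Omega)$, valid on any bounded Lipschitz domain (e.g. by extending $f$ to $\mathbb{R}^3$ with a Calder\'on--Stein operator and applying the Sobolev inequality $\dot H^1(\mathbb{R}^3)\hookrightarrow L^6(\mathbb{R}^3)$), so that $\|f\|_{L^6}\le C(\|\nabla f\|_{L^2}+\|f\|_{L^2})$. For $p\in[2,6]$ one has $\frac1p=\frac{\theta}{2}+\frac{1-\theta}{6}$ with $\theta=\frac{6-p}{2p}$ and $1-\theta=\frac{3p-6}{2p}\in[0,1]$, whence $\|f\|_{L^p}\le\|f\|_{L^2}^{\theta}\|f\|_{L^6}^{1-\theta}$; combining this with the embedding and the elementary bound $(a+b)^s\le a^s+b^s$ for $s\in[0,1]$ gives $\|f\|_{L^p}\le C\|f\|_{L^2}^{\theta}\|\nabla f\|_{L^2}^{1-\theta}+C\|f\|_{L^2}$, which is \eqref{f1} in the general case. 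To obtain $C_2=0$ under any of the three conditions I would invoke the Poincar\'e inequality $\|f\|_{L^2}\le C\|\nabla f\|_{L^2}$: when $\int_\Omega f\,dx=0$ this is Poincar\'e--Wirtinger, and when $(f\cdot n)|_{\partial\Omega}=0$ or $(f\times n)|_{\partial\Omega}=0$ it follows by a compactness argument --- otherwise there are $f_k$ with $\|f_k\|_{L^2}=1$, $\|\nabla f_k\|_{L^2}\to0$ and the boundary condition, and by Rellich--Kondrachov a subsequence converges in $L^2$ to a (componentwise) constant $f$ with $\|f\|_{L^2}=1$ still satisfying the boundary condition in the trace sense, which is impossible because a nonzero constant vector field can be neither everywhere tangent to $\partial\Omega$ (by the divergence theorem) nor everywhere parallel to its normal ($\partial\Omega$ being compact without boundary). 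Then $\|f\|_{L^6}\le C\|\nabla f\|_{L^2}$, and the same interpolation yields \eqref{f1} with $C_2=0$.

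For \eqref{f2} I would use Morrey's inequality on $\Omega$ (a bounded Lipschitz domain is quasiconvex, so the H\"older seminorm is controlled by the gradient alone): $g$ has a continuous representative on $\overline\Omega$ with $|g(x)-g(y)|\le C\|\nabla g\|_{L^r}|x-y|^{1-3/r}$. Pick $x_0\in\overline\Omega$ with $|g(x_0)|$ essentially equal to $\|g\|_{L^\infty}$; then $|g(x)|\ge\frac12\|g\|_{L^\infty}$ whenever $|x-x_0|\le\rho_\ast:=\bigl(\|g\|_{L^\infty}/(2C\|\nabla g\|_{L^r})\bigr)^{r/(r-3)}$. Using the corkscrew bound $|B(x_0,\rho)\cap\Omega|\ge c\rho^3$ for $\rho$ below the Lipschitz scale $\rho_1$ of $\Omega$, I split into two cases. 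If $\rho_\ast\le\rho_1$, integrating $|g|^q$ over $B(x_0,\rho_\ast)\cap\Omega$ gives $\|g\|_{L^q}^q\ge c\|g\|_{L^\infty}^q\rho_\ast^3$, and inserting the value of $\rho_\ast$ and solving for $\|g\|_{L^\infty}$ produces exactly $\|g\|_{L^\infty}\le C\|g\|_{L^q}^{\frac{q(r-3)}{3r+q(r-3)}}\|\nabla g\|_{L^r}^{\frac{3r}{3r+q(r-3)}}$, the exponents being forced by the scaling $g\mapsto g(\lambda\,\cdot)$. If $\rho_\ast>\rho_1$, integrating over $B(x_0,\rho_1)\cap\Omega$ gives $\|g\|_{L^\infty}\le C\|g\|_{L^q}$, which in turn yields $\|g\|_{L^\infty}\le C\|g\|_{L^2}$ (bounding $\|g\|_{L^q}$ by $\|g\|_{L^2}$ when $q\le2$, and interpolating between $L^2$ and $L^\infty$ when $q\ge2$); the two cases together give \eqref{f2} in general. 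Finally, under any of the three conditions, in the case $\rho_\ast>\rho_1$ one has simultaneously $\|g\|_{L^\infty}\le C\|g\|_{L^q}$ and $\|g\|_{L^\infty}\le C\|g\|_{L^2}\le C\|\nabla g\|_{L^2}\le C\|\nabla g\|_{L^r}$ (the Poincar\'e step plus $r>2$ on a bounded domain), so the weighted geometric mean of these two bounds, with weights $1-a$ and $a$ for $a=\frac{3r}{3r+q(r-3)}$, reproduces the product form; hence $C_4=0$.

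The step I expect to require the most care is everything touching $\partial\Omega$: securing the H\"older estimate for $g$ with only $\|\nabla g\|_{L^r}$ on the right-hand side (i.e. exploiting the quasiconvexity of $\Omega$, or a suitable extension), establishing the uniform measure lower bound $|B(x_0,\rho)\cap\Omega|\gtrsim\rho^3$ up to the boundary, and making the compactness argument airtight --- in particular checking that each of the three constraints genuinely forces the limiting constant to vanish. The surrounding interpolation and bookkeeping are routine.
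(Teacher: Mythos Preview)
The paper does not prove Lemma~\ref{lem:2.1}; it simply cites \cite[Lemma~2.3]{LWZ} and moves on. Your proposal, by contrast, gives a complete self-contained argument, and it is correct. The route you take is the standard one: Sobolev embedding $H^1\hookrightarrow L^6$ plus H\"older interpolation for \eqref{f1}, Morrey's inequality plus a level-set/corkscrew argument for \eqref{f2}, and a compactness proof of the Poincar\'e inequality under each of the three side conditions to kill the lower-order constants. The points you flag as delicate---the quasiconvexity of a bounded Lipschitz domain (needed for the pure-gradient Morrey bound), the uniform measure density $|B(x_0,\rho)\cap\Omega|\gtrsim\rho^3$ up to the boundary, and the verification that a nonzero constant vector cannot satisfy $(f\cdot n)|_{\partial\Omega}=0$ or $(f\times n)|_{\partial\Omega}=0$---are all genuine but well known, and your sketches for them are sound. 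In short, your write-up supplies what the paper outsources.
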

Next, we introduce the Bogovskii operator in a bounded domain, which plays an important role in controlling $\|P\|_{L^2}^2$. One has the following conclusion (cf. \cite[Theorem III.3.1]{G}).
\begin{lemma}\label{lem:2.2}
Let $\Omega\subset\mathbb{R}^3$ be a bounded Lipschitz domain. Then, for any $p\in(1,\infty)$, there exists a linear operator $\mathcal{B}=\left[\mathcal{B}_{1}, \mathcal{B}_{2}, \mathcal{B}_{3}\right]: L^{p}(\Omega) \rightarrow \big(W_{0}^{1, p}(\Omega)\big)^{3}$ such that
\begin{align*}
\begin{cases}
\operatorname{div} \mathcal{B}[f]=f, & x \in \Omega, \\
\mathcal{B}[f]=0, & x \in \partial \Omega,
\end{cases}
\end{align*}
and
\begin{align*}
\|\nabla\mathcal{B}[f]\|_{L^{p}} \leq C(p,\Omega)\|f\|_{L^{p}}.
\end{align*}
Moreover, if $f=\operatorname{div} g$ with $g\in L^p(\Omega)$ satisfying $(g \cdot n)|_{\partial \Omega}=0$, it holds that
\begin{align*}
\|\mathcal{B}[f]\|_{L^{p}} \leq C(p,\Omega)\|g\|_{L^{p}}.
\end{align*}
\end{lemma}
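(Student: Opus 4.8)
The plan is to reconstruct the classical construction of Bogovskii. Since the identity $\operatorname{div}\mathcal B[f]=f$ together with $\mathcal B[f]\in W_0^{1,p}(\Omega)$ forces $\int_\Omega f\,dx=0$, I will in fact build $\mathcal B$ on the subspace $L^p_0(\Omega)=\{f\in L^p:\int_\Omega f\,dx=0\}$ (subtracting the mean on the complement). The argument I have in mind has three stages: an explicit integral formula on a domain star-shaped with respect to a ball, the $L^p$ bounds for that formula via Calder\'on--Zygmund theory, and a partition-of-unity globalization to a general bounded Lipschitz domain.

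First I would treat the model case in which $\Omega$ is star-shaped with respect to a ball $B$ whose closure lies inside $\Omega$. Fixing $\omega\in C_c^\infty(B)$ with $\int_\Omega\omega\,dx=1$, I would set, for $f\in C_c^\infty(\Omega)$,
\[
\mathcal B[f](x)\triangleq\int_\Omega f(y)\,N(x,y)\,dy,\qquad N(x,y)\triangleq(x-y)\int_1^\infty\omega\big(y+t(x-y)\big)\,t^2\,dt ,
\]
and then check by the divergence theorem that $\operatorname{div}_x\mathcal B[f]=f-\omega\int_\Omega f\,dx$, which equals $f$ on $L^p_0$. The star-shapedness guarantees that $\mathcal B[f]\in C_c^\infty(\Omega)$ when $f$ is, so $\mathcal B[f]\in W_0^{1,p}$; linearity is immediate from the formula.

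The heart of the matter is the second stage, the bound $\|\nabla\mathcal B[f]\|_{L^p}\le C\|f\|_{L^p}$. Differentiating under the integral produces a representation $\partial_{x_j}\mathcal B_i[f](x)=\mathrm{p.v.}\!\int_\Omega K_{ij}(x,x-y)f(y)\,dy+G_{ij}(x)f(x)$ in which, for each fixed $x$, the map $z\mapsto K_{ij}(x,z)$ is smooth away from the origin, homogeneous of degree $-3$, and has vanishing mean over $\{|z|=1\}$ — a genuine Calder\'on--Zygmund kernel — up to an error kernel that is $O(|x-y|^{-2})$ and a bounded multiplier $G_{ij}$, all with constants depending only on $\omega$, $\operatorname{diam}\Omega$, and the star-shapedness parameters. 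The Calder\'on--Zygmund theorem handles the principal-value part on $L^p$ for $1<p<\infty$, while the weakly singular error and the multiplier are controlled by Young's inequality / the Schur test. Since moreover $|N(x,y)|\le C|x-y|^{-2}$ is integrable on the bounded set $\Omega$, the same reasoning also gives $\|\mathcal B[f]\|_{L^p}\le C\|f\|_{L^p}$, so $\mathcal B$ extends by density to a bounded operator $L^p_0(\Omega)\to W_0^{1,p}(\Omega)$. For the refined estimate, when $f=\operatorname{div}g$ I would integrate by parts in $y$ to get $\mathcal B[f](x)=-\int_\Omega g(y)\cdot\nabla_yN(x,y)\,dy$ — the boundary integral vanishes, and by approximation it still vanishes in the limiting situation $(g\cdot n)|_{\partial\Omega}=0$ — and then observe that $\nabla_yN$ decomposes, exactly as before, into a degree $-3$ Calder\'on--Zygmund kernel plus a weakly singular remainder, yielding $\|\mathcal B[\operatorname{div}g]\|_{L^p}\le C\|g\|_{L^p}$.

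Finally, to pass to a general bounded Lipschitz $\Omega$, I would use the standard fact that $\Omega=\bigcup_{k=1}^N\Omega_k$ with each $\Omega_k$ a bounded Lipschitz domain star-shaped with respect to a ball and, after reordering, $\Omega_k\cap\Omega_{k+1}\ne\emptyset$. Given $f\in L^p_0(\Omega)$, an iterative construction with a subordinate partition of unity and fixed cut-offs in the overlaps writes $f=\sum_{k=1}^N f_k$ with $\operatorname{supp}f_k\subset\overline{\Omega_k}$, $\int_{\Omega_k}f_k\,dx=0$, and $\|f_k\|_{L^p}\le C\|f\|_{L^p}$; carrying the same decomposition through for $g$ (using $\psi_kf=\operatorname{div}(\psi_k g)-\nabla\psi_k\cdot g$ and absorbing the correction terms) produces $f_k=\operatorname{div}g_k$ with $(g_k\cdot n)|_{\partial\Omega_k}=0$ and $\|g_k\|_{L^p}\le C\|g\|_{L^p}$. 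Setting $\mathcal B[f]\triangleq\sum_{k=1}^N\widetilde{\mathcal B_k[f_k]}$ (extended by zero), with $\mathcal B_k$ the operator from the first two stages on $\Omega_k$, gives a linear operator with $\operatorname{div}\mathcal B[f]=f$, $\mathcal B[f]\in W_0^{1,p}(\Omega)$, and all the asserted estimates by summing the local ones. The step I expect to be the real obstacle is verifying in the second stage that the kernels $\partial_xN$ and $\nabla_yN$ genuinely split into admissible Calder\'on--Zygmund kernels — in particular the mean-zero cancellation over spheres — with error terms that are weakly singular \emph{uniformly in} $x\in\Omega$; the globalization, though geometrically delicate, is purely organizational once the decomposition of a Lipschitz domain into star-shaped pieces is taken as known.
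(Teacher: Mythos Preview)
The paper does not give its own proof of this lemma; it simply cites \cite[Theorem III.3.1]{G} (Galdi's book). Your sketch is precisely the classical Bogovskii construction presented there --- explicit integral formula on star-shaped domains, Calder\'on--Zygmund bounds for $\nabla\mathcal{B}$, integration by parts for the $\|\mathcal{B}[\operatorname{div} g]\|_{L^p}$ estimate, and globalization by partition of unity --- so your approach coincides with the one the paper defers to.
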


For the Lam\'e system
\begin{align}\label{lame}
\begin{cases}
-\mu\Delta u - (\lambda + \mu)\nabla\divv u = f, \quad x \in \Omega,\\
u \cdot n = 0, \quad \curl u \times n = 0, \quad x \in \partial\Omega,
\end{cases}
\end{align}
one has the following estimates (cf. \cite{a}).
\begin{lemma}\label{l23}
Let \(u\) be a solution of the Lam\'e system \eqref{lame}, there exists a positive constant \(C\) depending only on \(\lambda\), \(\mu\), \(q\), \(k\), and \(\Omega\) such that
\begin{enumerate}
    \item If \(f \in W^{k,q}\) for some \(q \in (1, \infty)\) and integer \(k \geq 0\), then \(u \in W^{k+2,q}\) and
    \begin{equation}
    \|u\|_{W^{k+2,q}} \leq C(\|f\|_{W^{k,q}} + \|u\|_{L^q}).
    \end{equation}
    \item If \(f = \nabla g\) with \(g \in W^{k,q}\) for some \(q \geq 1\) and integer \(k \geq 0\), then \(u \in W^{k+1,q}\) and
    \begin{equation}
    \|u\|_{W^{k+1,q}} \leq C(\|g\|_{W^{k,q}} + \|u\|_{L^q}).
    \end{equation}
\end{enumerate}
\end{lemma}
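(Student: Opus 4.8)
The plan is to decouple the Lam\'e system into two elliptic problems governed respectively by $\divv u$ and $\curl u$, to estimate each by classical $L^q$ elliptic theory, and to reconstruct $u$ through the div--curl inequality of \cite{ZLZ20}. Using the identity $\Delta u=\nabla\divv u-\curl\curl u$, the equation in \eqref{lame} is equivalent to
\[
-(2\mu+\lambda)\nabla\divv u+\mu\curl\curl u=f\qquad\text{in }\Omega.
\]
Taking the divergence and using $\divv\curl\equiv0$ gives $-(2\mu+\lambda)\Delta(\divv u)=\divv f$ in $\Omega$. Taking the inner product with the outward normal $n$ on $\partial\Omega$ and noting that $(\curl\curl u)\cdot n|_{\partial\Omega}$ is a purely tangential differential expression in $\curl u\times n|_{\partial\Omega}$ (up to a sign it equals $\divv_{\partial\Omega}(\curl u\times n)$), which vanishes by the boundary conditions in \eqref{lame}, one gets the Neumann condition $(2\mu+\lambda)\,\partial_n(\divv u)=-f\cdot n$ on $\partial\Omega$. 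The solvability condition $\int_\Omega\divv f\,dx=\int_{\partial\Omega}f\cdot n\,dS$ holds automatically, and $\int_\Omega\divv u\,dx=\int_{\partial\Omega}u\cdot n\,dS=0$; hence the $L^q$ theory for the Neumann Laplacian yields $\|\divv u\|_{W^{k+1,q}}\le C(\|f\|_{W^{k,q}}+\|\divv u\|_{L^q})$.

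Next, set $\omega:=\curl u$, so that $\divv\omega=0$; applying $\curl$ to the reformulated equation and using $-\Delta=\curl\curl$ on divergence-free fields gives $-\mu\Delta\omega=\curl f$ in $\Omega$, which is to be solved with the boundary condition $\omega\times n=0$ on $\partial\Omega$. Supplemented by the constraint $\divv\omega=0$, this is a regular elliptic boundary value problem for the vector Laplacian (cf.\ \cite{a}), so the associated $L^q$ estimate gives $\|\curl u\|_{W^{k+1,q}}\le C(\|f\|_{W^{k,q}}+\|\curl u\|_{L^q})$. Combining the two estimates with the inequality $\|\nabla u\|_{W^{k+1,q}}\le C(\|\divv u\|_{W^{k+1,q}}+\|\curl u\|_{W^{k+1,q}})$ of \cite{ZLZ20} (valid for simply connected $\Omega$), then using $\|\divv u\|_{L^q}+\|\curl u\|_{L^q}\le C\|\nabla u\|_{L^q}$ and absorbing $\|\nabla u\|_{L^q}\le\|u\|_{W^{1,q}}\le\varepsilon\|u\|_{W^{k+2,q}}+C_\varepsilon\|u\|_{L^q}$ by interpolation (legitimate since $k\ge0$), I obtain assertion (1): $\|u\|_{W^{k+2,q}}\le C(\|f\|_{W^{k,q}}+\|u\|_{L^q})$.

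For assertion (2) I take $f=\nabla g$ and sharpen both problems. The Neumann problem for $\divv u$ becomes $-(2\mu+\lambda)\Delta(\divv u)=\Delta g$ with $(2\mu+\lambda)\partial_n(\divv u)=-\partial_n g$, so $(2\mu+\lambda)\divv u+g$ is harmonic with vanishing Neumann datum and therefore equals its mean value $\bar g$ (since $\overline{\divv u}=0$); hence $(2\mu+\lambda)\divv u=\bar g-g$ and $\|\divv u\|_{W^{k,q}}\le C\|g\|_{W^{k,q}}$. Inserting this identity into the reformulated equation (recall $\curl f=0$) gives $\curl\curl u=0$; since $\Omega$ is simply connected we may write $\curl u=\nabla\psi$, and then $\divv\curl u=0$ forces $\psi$ to be harmonic while $\curl u\times n=0$ on $\partial\Omega$ forces $\psi$ to be constant on each connected component $\Gamma_i$ of $\partial\Omega$. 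As $\int_{\Gamma_i}\curl u\cdot n\,dS=0$ by Stokes' theorem on the closed surface $\Gamma_i$, the energy identity $\int_\Omega|\nabla\psi|^2\,dx=\int_{\partial\Omega}\psi\,\partial_n\psi\,dS=\sum_i(\psi|_{\Gamma_i})\int_{\Gamma_i}\curl u\cdot n\,dS=0$ yields $\curl u\equiv0$. Applying \cite{ZLZ20} once more, $\|u\|_{W^{k+1,q}}\le C(\|\divv u\|_{W^{k,q}}+\|\curl u\|_{W^{k,q}}+\|u\|_{L^q})\le C(\|g\|_{W^{k,q}}+\|u\|_{L^q})$, which is assertion (2).

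The hardest point is the $L^q$ estimate for $\omega=\curl u$ under the tangential boundary condition $\omega\times n=0$ together with $\divv\omega=0$: one has to check that this is a regular (Lopatinskii--Shapiro) boundary value problem for the vector Laplacian and invoke the corresponding Calder\'on--Zygmund theory --- this is exactly where \cite{a} enters --- and then run the induction on $k$ and bookkeep the lower-order terms by interpolation. A more delicate subtlety, relevant only to assertion (2), is that the space of fields satisfying $\curl w=0$, $\divv w=0$, $w\times n=0$ on $\partial\Omega$ need not be trivial for a simply connected domain with several boundary components, so the vanishing of the boundary fluxes of $\curl u$ is genuinely needed in order to conclude that $\curl u\equiv0$.
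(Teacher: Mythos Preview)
The paper does not give a proof of this lemma at all: it is stated as a known result with the parenthetical reference ``(cf.\ \cite{a})'', i.e.\ the Agmon--Douglis--Nirenberg $L^q$ theory for general elliptic systems, and is used as a black box throughout. So there is no argument in the paper to compare your proposal against beyond that single citation.

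Your route is genuinely different and more informative. Rather than invoking ADN for the full Lam\'e operator with the complementing boundary condition, you decouple: a scalar Neumann problem for $\divv u$, a vector problem for $\omega=\curl u$ with the tangential condition $\omega\times n=0$, and then the div--curl inequality of \cite{ZLZ20} to rebuild $u$. This buys you two things. First, in assertion~(2) you actually obtain the explicit identities $(2\mu+\lambda)\divv u=\bar g-g$ and $\curl u\equiv 0$, which are sharper than the stated estimate and explain \emph{why} one derivative is gained when $f=\nabla g$. Second, the argument makes transparent that the only place heavy elliptic machinery is needed is the vector problem for $\omega$ in assertion~(1) --- as you note, this is precisely where one still falls back on \cite{a} (or on the div--curl--Laplacian theory in \cite{A}) to verify the Lopatinskii--Shapiro condition. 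By contrast, the paper's one-line citation handles both assertions uniformly but gives no structural insight.

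Two small caveats. Your proof uses Lemma~\ref{lem:2.4} (the div--curl estimate), so it implicitly imports the simply-connectedness hypothesis on $\Omega$; that is consistent with how the paper uses Lemma~\ref{l23}, but it is worth flagging since the lemma as stated does not mention it. And in assertion~(2) the statement allows $q\ge 1$, whereas both the Neumann theory and Lemma~\ref{lem:2.4} require $q>1$; this is an artifact of the statement rather than of your argument.
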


The following two lemmas are given in \cite[Theorem 3.2]{ZLZ20} and \cite[Propositions 2.6--2.9]{A}.
\begin{lemma}\label{lem:2.4}
Let $k\ge 0$ be an integer and $1<q<+\infty$. Assume that $\Omega\subset\mathbb R^3$ is a simply connected bounded domain with $C^{k+1, 1}$ boundary $\pa\Omega$. Then, for $v\in W^{k+1, q}$ with $(v\cdot n)|_{\partial\Omega}=0$, it holds that
\begin{align}\label{ff5}
\|v\|_{W^{k+1, q}}\le C(\|\divv v\|_{W^{k, q}}+\|\curl v\|_{W^{k, q}}).
\end{align}
 In particular, for $k=0$, we have
 \begin{align}
 \|\na v\|_{L^q}\le C(\|\divv v\|_{L^q}+\|\curl v\|_{L^q}).
 \end{align}
\end{lemma}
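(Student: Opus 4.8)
The plan is to split $v$ into a gradient part and a solenoidal part, handling the first with the scalar Neumann problem for the Laplacian and the second with $L^q$ vector‑potential theory, and to invoke the topology of $\Omega$ — simple connectedness, together with $\partial\Omega$ being a compact $C^{k+1,1}$ surface with finitely many components $\Gamma_1,\dots,\Gamma_m$ — exactly where it is needed. First, since $(v\cdot n)|_{\partial\Omega}=0$, the divergence theorem gives $\int_\Omega\divv v\,dx=\int_{\partial\Omega}v\cdot n\,dS=0$, so there is a unique $p$ with $\int_\Omega p\,dx=0$ solving $\Delta p=\divv v$ in $\Omega$ and $\nabla p\cdot n=0$ on $\partial\Omega$. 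By the $L^q$ Calder\'on--Zygmund estimate for the Neumann Laplacian on a $C^{k+1,1}$ domain, and absorbing the lower‑order term via the mean‑zero normalisation (a standard compactness argument, the kernel of the problem being just the constants), one obtains $\|p\|_{W^{k+2,q}}\le C\|\divv v\|_{W^{k,q}}$, hence $\|\nabla p\|_{W^{k+1,q}}\le C\|\divv v\|_{W^{k,q}}$. Setting $w\triangleq v-\nabla p$, one then has $\divv w=0$, $\curl w=\curl v$, and $(w\cdot n)|_{\partial\Omega}=0$.

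Next I would build a vector potential for $w$ \emph{from its curl rather than from $w$ itself}, so that the resulting bound involves $\|\curl v\|$. Put $g\triangleq\curl v\in W^{k,q}$; then $\divv g=0$ and, by Stokes' theorem on each closed surface $\Gamma_i$, $\int_{\Gamma_i}g\cdot n\,dS=\int_{\Gamma_i}\curl v\cdot n\,dS=0$. These are precisely the compatibility conditions of the $L^q$ theory of vector potentials (Amrouche--Seloula, Amrouche--Bernardi--Dauge--Girault): there exists $\psi\in W^{k+1,q}$ with $\curl\psi=g$, $\divv\psi=0$, $(\psi\cdot n)|_{\partial\Omega}=0$, and $\|\psi\|_{W^{k+1,q}}\le C\|g\|_{W^{k,q}}=C\|\curl v\|_{W^{k,q}}$. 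Now $\curl(w-\psi)=\curl v-g=0$, and since $\Omega$ is simply connected the Poincar\'e lemma yields $\zeta\in W^{k+2,q}$ with $w-\psi=\nabla\zeta$; then $\Delta\zeta=\divv w-\divv\psi=0$ with $\nabla\zeta\cdot n=(w-\psi)\cdot n=0$ on $\partial\Omega$, so $\zeta$ is constant and $w=\psi$. (If one prefers not to rely on the gauge $\divv\psi=0$, one bounds $\nabla\zeta$ by the Neumann estimate of the first step, now with data $\divv\psi$ and $-\psi\cdot n|_{\partial\Omega}$, both controlled by $\|\psi\|_{W^{k+1,q}}$.) Either way $\|w\|_{W^{k+1,q}}\le C\|\curl v\|_{W^{k,q}}$, and combining with the first step,
\[
\|v\|_{W^{k+1,q}}\le\|\nabla p\|_{W^{k+1,q}}+\|w\|_{W^{k+1,q}}\le C\big(\|\divv v\|_{W^{k,q}}+\|\curl v\|_{W^{k,q}}\big),
\]
which is \eqref{ff5}; taking $k=0$ gives in particular the asserted $\|\na v\|_{L^q}\le C(\|\divv v\|_{L^q}+\|\curl v\|_{L^q})$.

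I expect the main obstacle to be analytic rather than structural: because we work in $L^q$ on a merely $C^{k+1,1}$ domain, the Hilbert‑space machinery (Gaffney's inequality, orthogonal Hodge decomposition) is unavailable, so one must lean on the $L^q$ Calder\'on--Zygmund theory for the Neumann Laplacian and, more substantially, on the $L^q$ regularity theory of vector potentials, all the while tracking the derivative count carefully so that $v$ gains exactly one derivative over $\divv v$ and $\curl v$ and not two. A secondary point to verify is that the compatibility and topological hypotheses — $\divv\curl v=0$, the vanishing flux of $\curl v$ through each $\Gamma_i$, and the simple connectedness of $\Omega$ — are exactly what make the vector‑potential step and the identification $w=\psi$ valid and what annihilate the space of harmonic Neumann fields, so that no lower‑order term $\|v\|_{L^q}$ survives on the right‑hand side.
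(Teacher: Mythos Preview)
Your argument is correct, but you should know that the paper itself does not prove this lemma at all: it is stated there as a known result and attributed to von Wahl \cite{ZLZ20} (with the companion Lemma~\ref{lm25} attributed to Aramaki \cite{A}). So there is no ``paper's own proof'' to compare against beyond a citation.

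That said, your route is essentially the standard one underlying those references. You perform an $L^q$ Helmholtz--Hodge decomposition: the scalar Neumann problem isolates the gradient part and delivers the $\divv v$ bound, and the $L^q$ vector-potential theory (Amrouche--Bernardi--Dauge--Girault, Amrouche--Seloula) handles the solenoidal remainder and delivers the $\curl v$ bound. You invoke simple connectedness at exactly the right place --- it forces the space of tangential harmonic fields ($\divv h=0$, $\curl h=0$, $h\cdot n|_{\partial\Omega}=0$) to be trivial, which is why no residual $\|v\|_{L^q}$ term appears on the right. The derivative bookkeeping is also correct: $C^{k+1,1}$ boundary regularity is precisely what is needed for the $W^{k+2,q}$ Neumann estimate and the $W^{k+1,q}$ vector-potential estimate. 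In short, you have supplied a self-contained proof where the paper only gives a pointer to the literature, and your proof is faithful to how that literature actually argues.
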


\begin{lemma}\label{lm25}
Let $k\geq0$ be an integer and $1<q<+\infty$. Suppose that $\Omega\subset\mathbb R^3$ is a bounded domain and its $C^{k+1,1}$ boundary $\partial\Omega$ has a finite number of 2-dimensional connected components. Then, for $v\in W^{k+1,q}$ with $(v\times n)|_{\partial\Omega}=0$, we have
$$\|v\|_{W^{k+1,q}}\leq C(\|\divv v\|_{W^{k,q}}+\|\curl v\|_{W^{k,q}}+\|v\|_{L^q}).$$
In particular, if  $\Omega$ has no holes, then
$$\|v\|_{W^{k+1,q}}\leq C(\|\divv v\|_{W^{k,q}}+\|\curl v\|_{W^{k,q}}).$$
\end{lemma}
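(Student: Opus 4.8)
\emph{Proof proposal.} The plan is to split $v$ into a gradient part and a solenoidal part, estimate each by $L^{q}$-elliptic regularity valid on $C^{k+1,1}$ domains, and then remove the lower-order term $\|v\|_{L^{q}}$ by a compactness argument when $\Omega$ has no holes. Write $g:=\divv v\in W^{k,q}$ and $\omega:=\curl v\in W^{k,q}$, so that $\divv\omega=0$. First peel off the gradient: let $\varphi$ solve the Dirichlet problem $\Delta\varphi=g$ in $\Omega$, $\varphi=0$ on $\partial\Omega$; standard $L^{q}$-elliptic regularity for the Dirichlet Laplacian on a $C^{k+1,1}$ domain gives $\varphi\in W^{k+2,q}$ with $\|\varphi\|_{W^{k+2,q}}\le C\|g\|_{W^{k,q}}$. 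Set $w:=v-\nabla\varphi$. Then $\divv w=0$, $\curl w=\omega$, and since $\varphi$ is constant on each boundary component, $\nabla\varphi$ is parallel to $n$ on $\partial\Omega$, so $w\times n=v\times n-\nabla\varphi\times n=0$ there; moreover $\|w\|_{W^{k+1,q}}$ controls $\|v\|_{W^{k+1,q}}$ up to $C\|g\|_{W^{k,q}}$. Thus it suffices to bound the solenoidal field $w$.

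For $w$ I would introduce a vector potential driven by $\omega$ so as to gain one derivative. By the Hodge-type decomposition theory for solenoidal fields, one can write $w=\curl\psi+h$, where $h$ belongs to the finite-dimensional space $\mathcal H$ of harmonic fields ($\divv h=\curl h=0$ in $\Omega$, $h\times n=0$ on $\partial\Omega$) — which is trivial precisely when $\Omega$ has no holes — and $\psi$ may be chosen with the gauge $\divv\psi=0$ in $\Omega$ and the boundary conditions $\psi\cdot n=0$, $\curl\psi\times n=0$ on $\partial\Omega$. Then $\curl\psi=w-h$ satisfies $\curl\psi\times n=0$ on $\partial\Omega$ and $\curl\curl\psi=\curl w=\omega$, hence $-\Delta\psi=\curl\curl\psi-\nabla\divv\psi=\omega$. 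Agmon--Douglis--Nirenberg regularity for $-\Delta$ under the boundary conditions $\psi\cdot n=0$, $\curl\psi\times n=0$ yields $\|\psi\|_{W^{k+2,q}}\le C(\|\omega\|_{W^{k,q}}+\|\psi\|_{W^{k+1,q}})$, while Lemma~\ref{lem:2.4} applied to $\psi$ (which has $\psi\cdot n=0$) gives $\|\psi\|_{W^{k+1,q}}\le C\|\curl\psi\|_{W^{k,q}}\le C\|w\|_{W^{k,q}}$. Interpolating $\|w\|_{W^{k,q}}\le\delta\|w\|_{W^{k+1,q}}+C_{\delta}\|w\|_{L^{q}}$ and absorbing, together with $\|h\|_{W^{k+1,q}}\le C\|h\|_{L^{2}}\le C\|w\|_{L^{q}}$ (equivalence of norms on $\mathcal H$), we obtain
\[
\|w\|_{W^{k+1,q}}\le\|\psi\|_{W^{k+2,q}}+\|h\|_{W^{k+1,q}}\le C\big(\|\omega\|_{W^{k,q}}+\|w\|_{L^{q}}\big),
\]
and combining with the estimate for $\nabla\varphi$ produces the general bound $\|v\|_{W^{k+1,q}}\le C(\|\divv v\|_{W^{k,q}}+\|\curl v\|_{W^{k,q}}+\|v\|_{L^{q}})$.

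Finally, when $\Omega$ has no holes the space $\mathcal H$ is trivial and the term $\|v\|_{L^{q}}$ can be dropped by a contradiction argument. If it could not, there would be a sequence $v_{m}$ with $v_{m}\times n=0$, $\|v_{m}\|_{W^{k+1,q}}=1$, and $\|\divv v_{m}\|_{W^{k,q}}+\|\curl v_{m}\|_{W^{k,q}}\to0$; by compactness of the embedding $W^{k+1,q}(\Omega)\hookrightarrow W^{k,q}(\Omega)$ a subsequence converges in $W^{k,q}$, hence in $L^{q}$ and with convergent traces, to some $v$ with $\divv v=0$, $\curl v=0$, $v\times n=0$, i.e. $v\in\mathcal H=\{0\}$. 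Then $\|v_{m}\|_{L^{q}}\to0$, and the general bound forces $1=\|v_{m}\|_{W^{k+1,q}}\le C(\|\divv v_{m}\|_{W^{k,q}}+\|\curl v_{m}\|_{W^{k,q}}+\|v_{m}\|_{L^{q}})\to0$, a contradiction; this yields the improved estimate.

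The principal obstacle I anticipate is establishing, with constants depending only on $k$, $q$, and $\Omega$ on a merely $C^{k+1,1}$ domain, the two structural ingredients used above: (i) the $W^{k+2,q}$-regularity for $-\Delta\psi=\omega$ under the mixed boundary conditions $\psi\cdot n=0$, $\curl\psi\times n=0$ (including the verification that this problem is Agmon--Douglis--Nirenberg elliptic and solvable, the compatibility $\int_{\Omega}\omega\cdot h=\int_{\partial\Omega}(w\times n)\cdot h=0$ holding for $h\in\mathcal H$); and (ii) the decomposition $w=\curl\psi+h$ with the gauge $\divv\psi=0$, $\psi\cdot n=0$ and the correct finite-dimensional corrector $h$, equivalently the solvability theory of the div--curl system under the boundary condition $v\times n=0$. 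Both are classical but delicate: in practice one first settles the $L^{2}$ case through the Friedrichs/Hodge decomposition and then bootstraps to general $q$ using the $L^{q}$ Calder\'{o}n--Zygmund theory for the scalar Laplacian, and one must carefully track which cohomology class ("no holes") governs the vanishing of $h$ — the relevant one here being attached to the tangential condition $v\times n=0$, which differs from the one appearing in Lemma~\ref{lem:2.4}. The remaining steps are routine bookkeeping once these are in hand.
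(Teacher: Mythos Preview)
The paper does not prove this lemma at all: it is quoted verbatim from Aramaki's work \cite[Propositions~2.6--2.9]{A} (the sentence immediately preceding Lemmas~\ref{lem:2.4} and~\ref{lm25} says so), so there is no ``paper's own proof'' to compare against. What you have written is a reasonable outline of the classical Hodge--Friedrichs route that Aramaki, and before him Kozono--Yanagisawa and von Wahl, follow to establish $L^q$ div--curl estimates.

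That said, one step in your sketch is not self-contained as written. You invoke Lemma~\ref{lem:2.4} to control $\|\psi\|_{W^{k+1,q}}$ by $\|\curl\psi\|_{W^{k,q}}$, but Lemma~\ref{lem:2.4} assumes $\Omega$ is \emph{simply connected}, whereas Lemma~\ref{lm25} does not. If $\Omega$ has nontrivial first Betti number (e.g.\ a solid torus), the gauge $\psi\cdot n=0$, $\divv\psi=0$ does not uniquely determine $\psi$ from $\curl\psi$, and the inequality $\|\psi\|_{W^{k+1,q}}\le C\|\curl\psi\|_{W^{k,q}}$ fails without an extra lower-order term. This is easily repaired --- either carry $\|\psi\|_{L^q}$ through and absorb it into $\|w\|_{L^q}$ via the decomposition, or fix $\psi$ uniquely by imposing orthogonality to the tangential harmonic fields --- but you should flag it explicitly rather than leaning on Lemma~\ref{lem:2.4} as stated. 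You correctly anticipate in your closing paragraph that the two boundary conditions are governed by different cohomology groups; the point is that this distinction already bites in the intermediate estimate for $\psi$, not only in the identification of $\mathcal H$.
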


Motivated by \cite{SH}, we set
\begin{align}\label{f5}
F\triangleq (2\mu+\lambda)\divv u-(P-P(\rho_s))-\f12|H|^2 , \quad \omega\triangleq\na\times u,
\end{align}
where $F$ and $\omega$ denote the {\it effective viscous flux} and the {\it vorticity}, respectively. For $F$, $\omega$, and $\na u$, we have the following key {\it a priori} estimates.
\begin{lemma}\label{lem:2.6}
Let \((\rho, u, H)\) be a smooth solution of \(\boldsymbol{\eqref{a1}}\)--\(\boldsymbol{\eqref{a5}}\). Then for any \(p \in [2, 6]\), there exists a positive constant \(C\) depending only on \(p, q, \mu, \lambda, \nu\), the domain \(\Omega\), and \(\|\phi\|_{H^2}\) such that the following estimates hold
\begin{gather}
\|\nabla u\|_{L^p}\leq C(\|\divv u\|_{L^p}+\|\curl u\|_{L^p}),\label{111}\\
\|\nabla H\|_{L^p}\leq C\|\curl H\|_{L^p},\label{112}\\
\|\nabla F\|_{L^p}\leq C\big(\|\rho\dot u\|_{L^p}+\|\rho-\rho_{s}\|_{L^{\frac{6p}{6-p}}}+\|H\cdot \nabla H\|_{L^p}\big),\label{2.12}\\
\|\nabla\curl u\|_{L^p}\leq C\big(\|\rho\dot u\|_{L^p}+\|\rho-\rho_{s}\|_{L^{\frac{6p}{6-p}}}+\|H\cdot \nabla H\|_{L^p}+\|\nabla u\|_{L^2}\big),\label{114}\\
\|F\|_{L^p}\leq C(\|\rho\dot u\|_{L^2}+\|H\cdot \nabla H\|_{L^2})^{\frac{3p-6}{2p}}\big(\|\nabla u\|_{L^2}+\|\rho-\rho_{s}\|_{L^2}+\|H\|_{L^4}^{2}\big)^{\frac{6-p}{2p}}
\notag \\
+(\|\nabla u\|_{L^2}+\|\rho-\rho_{s}\|_{L^p}+\|\rho-\rho_{s}\|_{L^3}+\|H\|_{L^{4}}^{2}),\label{115}\\
\|\curl u\|_{L^p}\leq C\big(\|\rho\dot u\|_{L^{2}}+\|H\cdot\nabla H\|_{L^{2}}\big)^{\frac{3p-6}{2p}}\|\nabla u\|_{L^{2}}^{\frac{6-p}{2p}}+C\big(\|\nabla u\|_{L^2}+\|\rho-\rho_{s}\|_{L^3}\big),\\
\|\nabla u\|_{L^p}\leq C\big(\|\rho\dot u\|_{L^2}+\|H\cdot\nabla H\|_{L^2}\big)^{\frac{3p-6}{2p}}\big(\|\nabla u\|_{L^2}+\|\rho-\rho_s\|_{L^2}+\|H\|_{L^4}^2\big)^{\frac{6-p}{2p}} \label{116} \notag \\
+C\big(\|\nabla u\|_{L^2}+\|\rho-\rho_s\|_{L^p}+\|\rho-\rho_s\|_{L^3}+\|H\|_{L^{2p}}^2\big).
\end{gather}
\end{lemma}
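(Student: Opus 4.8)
The plan is to read off from the reformulated momentum equation \eqref{1.23} a Neumann problem for $F$ and a companion div--curl problem for $\omega=\curl u$, which will give \eqref{2.12} and \eqref{114} directly, and then to obtain the remaining inequalities by Gagliardo--Nirenberg interpolation. For \eqref{111} and \eqref{112} I would simply invoke Lemma \ref{lem:2.4}: since $\Omega$ is simply connected with $u\cdot n=H\cdot n=0$ on $\partial\Omega$ and $\divv H=0$, one gets $\|\nabla u\|_{L^p}\le C(\|\divv u\|_{L^p}+\|\curl u\|_{L^p})$ and $\|\nabla H\|_{L^p}\le C(\|\divv H\|_{L^p}+\|\curl H\|_{L^p})=C\|\curl H\|_{L^p}$.

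For \eqref{2.12} I would set $w\triangleq\rho\dot u-(\rho-\rho_s)\nabla\phi-H\cdot\nabla H$, so that \eqref{1.23} (obtained from $\eqref{a1}_2$ via $\Delta u=\nabla\divv u-\curl\curl u$, $(\nabla\times H)\times H=H\cdot\nabla H-\tfrac12\nabla|H|^2$, and $\nabla P(\rho_s)=\rho_s\nabla\phi$) reads $w=\nabla F-\mu\curl\omega$. Taking the divergence and using $\divv\curl\omega=0$ gives $\Delta F=\divv w$ in $\Omega$; dotting with the unit normal on $\partial\Omega$ and using that the Navier-slip condition $\curl u\times n=0$ forces the tangential trace of $\omega$ to vanish on $\partial\Omega$, whence $(\curl\omega)\cdot n=0$ on $\partial\Omega$, gives $\partial F/\partial n=w\cdot n$ on $\partial\Omega$. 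Thus $w=\nabla F+(-\mu\curl\omega)$ is the Helmholtz decomposition of $w$, and the $L^p$-theory for the Neumann Laplacian (equivalently, $L^p$-continuity of the Helmholtz projection on the smooth bounded domain $\Omega$, $1<p<\infty$) yields $\|\nabla F\|_{L^p}+\|\curl\omega\|_{L^p}\le C\|w\|_{L^p}$. Since $\phi\in H^2$ gives $\|\nabla\phi\|_{L^6}\le C\|\phi\|_{H^2}$, Hölder's inequality with $\tfrac1p=\tfrac{6-p}{6p}+\tfrac16$ bounds $\|w\|_{L^p}\le\|\rho\dot u\|_{L^p}+C\|\rho-\rho_s\|_{L^{6p/(6-p)}}+\|H\cdot\nabla H\|_{L^p}$, which is \eqref{2.12}.

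For \eqref{114} I would apply Lemma \ref{lm25} to $\omega$ (which satisfies $\divv\omega=0$ and $\omega\times n=0$ on $\partial\Omega$): $\|\nabla\omega\|_{L^p}\le C(\|\curl\omega\|_{L^p}+\|\omega\|_{L^p})$, where $\|\curl\omega\|_{L^p}\le C\|w\|_{L^p}$ is already controlled. For $\|\omega\|_{L^p}$, $p\in[2,6]$, I would use $\|\omega\|_{L^p}\le C\|\omega\|_{H^1}\le C(\|\nabla u\|_{L^2}+\|\nabla\omega\|_{L^2})$, and the $p=2$ case of the preceding line together with $\|\omega\|_{L^2}\le C\|\nabla u\|_{L^2}$ and $\|w\|_{L^2}\le C(\|\rho\dot u\|_{L^2}+\|\rho-\rho_s\|_{L^3}+\|H\cdot\nabla H\|_{L^2})$ to control $\|\nabla\omega\|_{L^2}$; since on the bounded domain $\Omega$ lower $L^q$-norms are dominated by higher ones ($\|g\|_{L^2}\le C\|g\|_{L^p}$, $\|\rho-\rho_s\|_{L^3}\le C\|\rho-\rho_s\|_{L^{6p/(6-p)}}$ for $p\ge2$), \eqref{114} follows. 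The last three inequalities are interpolations via Lemma \ref{lem:2.1}. For \eqref{115}, \eqref{f1} gives $\|F\|_{L^p}\le C\|F\|_{L^2}^{(6-p)/(2p)}\|\nabla F\|_{L^2}^{(3p-6)/(2p)}+C\|F\|_{L^2}$; I would bound $\|F\|_{L^2}\le C(\|\nabla u\|_{L^2}+\|\rho-\rho_s\|_{L^2}+\|H\|_{L^4}^2)$ from the definition \eqref{f5} and $|P(\rho)-P(\rho_s)|\le C|\rho-\rho_s|$, and $\|\nabla F\|_{L^2}\le C(\|\rho\dot u\|_{L^2}+\|\rho-\rho_s\|_{L^3}+\|H\cdot\nabla H\|_{L^2})$ from \eqref{2.12}, then split off the $\|\rho-\rho_s\|_{L^3}$-contribution via $(a+b)^\theta\le a^\theta+b^\theta$ and absorb it into the additive term by Young's inequality. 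The bound for $\|\curl u\|_{L^p}$ is the same argument applied to $\omega$, now using \eqref{f1} with $C_2=0$ (legitimate since $\omega\times n=0$ on $\partial\Omega$), $\|\omega\|_{L^2}\le C\|\nabla u\|_{L^2}$, and the $p=2$ case of \eqref{114}. Finally \eqref{116} follows from \eqref{111}, $\|\divv u\|_{L^p}\le C(\|F\|_{L^p}+\|\rho-\rho_s\|_{L^p}+\|H\|_{L^{2p}}^2)$, and substitution of \eqref{115} together with the $\|\curl u\|_{L^p}$ bound.

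The main obstacle is the observation underlying \eqref{2.12}: that $w=\nabla F-\mu\curl\omega$ is precisely the Helmholtz decomposition of $w$, which rests on the identity $(\curl\omega)\cdot n=0$ on $\partial\Omega$. This is exactly where the Navier-slip condition $\curl u\times n=0$ enters and what makes the $L^p$ elliptic theory applicable; everything afterwards is careful but routine $L^p$-bookkeeping on the bounded domain $\Omega$ --- vector identities, Gagliardo--Nirenberg plus Young's inequality, and domination of lower by higher $L^q$-norms.
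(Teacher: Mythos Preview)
Your proposal is correct and follows essentially the same route as the paper: the Neumann problem for $F$ with $\partial F/\partial n=w\cdot n$ (which the paper writes out explicitly as \eqref{ee1} and handles by citing an $L^p$ elliptic estimate rather than invoking the Helmholtz projection), Lemma~\ref{lm25} applied to $\omega$ for \eqref{114}, and Gagliardo--Nirenberg interpolation for the remaining bounds. The only cosmetic differences are your Helmholtz-decomposition phrasing for \eqref{2.12} and your use of the $H^1\hookrightarrow L^p$ embedding in place of the interpolation inequality \eqref{f1} when bounding $\|\omega\|_{L^p}$ inside the proof of \eqref{114}; both lead to the same estimates.
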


\begin{remark}\label{remark:2.1}
If \(p = 6\), then \(\|\rho - \rho_s\|_{L^{\frac{6p}{6-p}}} \triangleq \|\rho - \rho_s\|_{L^\infty}\).
\end{remark}
\begin{proof}
We get \eqref{111} and \eqref{112} from \eqref{a5} and Lemma \ref{lem:2.4}.

By \eqref{a1}$_2$ and $\eqref{a5}$, one finds that the effective viscous flux $F$ satisfies
\begin{align}\label{ee1}
\begin{cases}
\Delta F=\div(\rho\dot u-H\cdot\nabla H-(\rho-\rho_s)\nabla\phi) &\text{in}~\Omega,\\
\f{\pa F}{\pa n}=(\rho\dot u-H\cdot\nabla H-(\rho-\rho_s)\nabla\phi)\cdot n &\text{on}~\pa\Omega.
\end{cases}
\end{align}
It follows from \cite[Lemma 4.27]{NS1} that
\begin{align}\label{ff14}
\|\na F\|_{L^p}&\le C\|\rho\dot u-H\cdot\nabla H-(\rho-\rho_s)\nabla\phi\|_{L^p}\notag\\
&\leq C\|\rho\dot u\|_{L^p}+C\|H\cdot\nabla H\|_{L^p}+C\|\rho-\rho_s\|_{L^{\frac{6p}{6-p}}}\|\nabla\phi\|_{L^6}\notag\\
&\leq C\big(\|\rho\dot u\|_{L^p}+\|H\cdot\nabla H\|_{L^p}+\|\rho-\rho_s\|_{L^{\frac{6p}{6-p}}}\big).
\end{align}
On the other hand, we rewrite $\eqref{a1}_2$ as
\begin{align*}
\mu\na\times \omega=\na F-\rho\dot u+H\cdot\nabla H+(\rho-\rho_s)\nabla\phi,
\end{align*}
which along with $(\omega\times n)|_{\pa\Omega}=0$, $\divv \omega=0$, and Lemma \ref{lm25} leads to
\begin{align}\label{ff16}
\|\nabla\curl u\|_{L^p}&\le C\big(\|\na\times\curl u\|_{L^p}+\|\curl u\|_{L^p}\big)\notag\\
&\leq C\big(\|\rho\dot u\|_{L^p}+\|H\cdot\nabla H\|_{L^p}+\|\rho-\rho_s\|_{L^{\frac{6p}{6-p}}}+\|\curl u\|_{L^p}\big)\notag\\
&\leq C\big(\|\rho\dot u\|_{L^p}+\|H\cdot\nabla H\|_{L^p}+\|\rho-\rho_s\|_{L^{\frac{6p}{6-p}}}+\|\nabla u\|_{L^2}\big),
\end{align}
where we have used
\begin{align}\label{z2.20}
\|\curl u\|_{L^p}&\le C\|\curl u\|_{L^2}^{\frac{6-p}{2p}}\|\nabla\curl u\|_{L^2}^{\frac{3p-6}{2p}}
+C\|\curl u\|_{L^2}\nonumber\\
&\leq C\big(\|\curl u\|_{L^2}+\|\nabla\curl u\|_{L^2}\big)\notag\\
&\leq C\big(\|\rho\dot u\|_{L^2}+\|H\cdot\nabla H\|_{L^2}+\|\rho-\rho_s\|_{L^{3}}+\|\nabla u\|_{L^2}\big).
\end{align}
\par Furthermore, one can deduce from \eqref{f1} and \eqref{2.12} that, for \( p \in [2,6] \),
\begin{align}
\| F \|_{L^p} &\leq C \| F \|_{L^2}^{\frac{6-p}{2p}} \| \nabla F \|_{L^2}^{\frac{3p-6}{2p}}+C\| F \|_{L^2}  \notag \\
&\leq C \left( \| \rho\dot u \|_{L^2} + \| \rho - \rho_s \|_{L^3}+\|H\cdot\nabla H\|_{L^2} \right)^{\frac{3p-6}{2p}} \left( \| \nabla u \|_{L^2} + \| \rho - \rho_s \|_{L^2} +\|H\|_{L^4}^2\right)^{\frac{6-p}{2p}} \notag \\
&\quad +C\left(\|\nabla u\|_{L^2}+ \|\rho- \rho_s\|_{L^3}+ \|H\|_{L^4}^2\right)\notag\\
&\leq C \left( \| \rho\dot u \|_{L^2}+ \|H\cdot\nabla H\|_{L^2} \right)^{\frac{3p-6}{2p}} \left( \| \nabla u \|_{L^2} + \| \rho - \rho_s \|_{L^2} +\|H\|_{L^4}^2\right)^{\frac{6-p}{2p}} \notag\\
&\quad +C\left(\|\nabla u\|_{L^2}+ \|\rho- \rho_s\|_{L^3}+ \|H\|_{L^4}^2\right),
\end{align}
which implies that
\begin{equation}
\| F \|_{L^p} \leq C \left( \| \rho\dot u \|_{L^2} + \| \nabla u \|_{L^2} + \| \rho - \rho_s \|_{L^3} +\|H\cdot\nabla H\|_{L^2}+\|H\|_{L^4}^2\right). \label{eq:2.18_simplified}
\end{equation}
Similarly, one gets that
\begin{align*}
\| \text{curl} \, u \|_{L^p} &\leq C \| \text{curl} \, u \|_{L^2}^{\frac{6-p}{2p}} \| \nabla \text{curl} \, u \|_{L^2}^{\frac{3p-6}{2p}} + C \| \text{curl} \, u \|_{L^2} \notag \\
&\leq C \left( \| \rho\dot u \|_{L^2} + \| \rho - \rho_s \|_{L^3} +\|H\cdot\nabla H\|_{L^2}+\| \nabla u \|_{L^2} \right)^{\frac{3p-6}{2p}} \| \nabla u \|_{L^2}^{\frac{6-p}{2p}}+ C \| \nabla u \|_{L^2} \notag \\
&\leq C \left( \| \rho\dot u \|_{L^2} +\|H\cdot\nabla H\|_{L^2} \right)^{\frac{3p-6}{2p}} \| \nabla u \|_{L^2}^{\frac{6-p}{2p}}+ C(\| \nabla u \|_{L^2}+\|\rho-\rho_s\|_{L^3}),
\end{align*}
which leads to
\begin{equation}
\| \text{curl} \, u \|_{L^p} \leq C \left( \| \rho\dot u \|_{L^2} + \| \nabla u \|_{L^2} + \| \rho - \rho_s \|_{L^3} +\|H\cdot\nabla H\|_{L^2}\right). \label{eq:2.19_simplified}
\end{equation}
By virtue of \eqref{ff5}, \eqref{114}, and \eqref{115}, it holds that
\begin{align}
\|\nabla u\|_{L^p} &\leq C(\|\divv u\|_{L^p}+\|\curl u\|_{L^p}) \notag \\
&\leq C\big( \|F\|_{L^p} + \|\curl u\|_{L^p} + \| P - P(\rho_s) \|_{L^p}+\||H|^2\|_{L^p}\big) \notag \\
&\leq C (\| \rho\dot u \|_{L^2}+\|H\cdot\nabla H\|_{L^2})^{\frac{3p-6}{2p}}\big(\| \nabla u \|_{L^2}+\|\rho-\rho_s\|_{L^2}+\|H\|_{L^4}^2\big)^{\frac{6-p}{2p}}\notag \\
&\quad + C\big(\| \nabla u \|_{L^2} + \| \rho - \rho_s \|_{L^3} + \| \rho - \rho_s \|_{L^p}+\|H\|_{L^{2p}}^2\big).
 \end{align}
This completes the proof.
\end{proof}

Furthermore, the following estimates for material derivative of $u$ have been obtained in \cite[Lemma 3.2]{CGC}.

\begin{lemma}\label{lem:2.7}
Under the assumption of Lemma \ref{l23}, there exists a positive constant $C$ depending only on $\Omega$ such that
\begin{gather}
\|\dot u\|_{L^6}\le C\big(\|\nabla\dot u\|_{L^2}+\|\nabla u\|_{L^2}^2\big),\label{2.26}\\
\|\nabla\dot u\|_{L^2}\le C\big(\|\divv\dot u\|_{L^2}+\|\curl\dot u\|_{L^2}+\|\nabla u\|_{L^4}^2\big).\label{d12}
\end{gather}
\end{lemma}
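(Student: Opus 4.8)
The plan is to exploit the Navier--slip structure to compensate for the fact that $\dot u=u_t+u\cdot\nabla u$ does \emph{not} itself satisfy the boundary conditions in \eqref{a5}. The starting point for both estimates is the boundary identity
\begin{equation*}
\dot u\cdot n=-\,u\cdot\nabla n\cdot u\qquad\text{on }\partial\Omega .
\end{equation*}
I would derive it as follows: since $u\cdot n\equiv0$ on $\partial\Omega$, differentiating in $t$ yields $u_t\cdot n=0$ on $\partial\Omega$; and, $u$ being tangent to $\partial\Omega$, applying the tangential operator $u\cdot\nabla$ to $u\cdot n=0$ gives $(u\cdot\nabla u)\cdot n=-u\cdot\nabla n\cdot u$ there. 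In particular $|\dot u\cdot n|\le C|u|^2$ on $\partial\Omega$.

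For \eqref{2.26} I would first record the Sobolev--Poincar\'e inequality
\begin{equation*}
\|v\|_{L^6(\Omega)}\le C\big(\|\nabla v\|_{L^2(\Omega)}+\|v\cdot n\|_{L^2(\partial\Omega)}\big),\qquad v\in H^1(\Omega;\mathbb R^3),
\end{equation*}
which follows by a routine compactness argument (a constant vector field with vanishing normal trace on the closed surface $\partial\Omega$ vanishes identically, and the trace $H^1(\Omega)\hookrightarrow L^2(\partial\Omega)$ is compact). Applying it with $v=\dot u$, using the boundary identity, the trace embedding $H^1(\Omega)\hookrightarrow L^4(\partial\Omega)$, and the Poincar\'e bound $\|u\|_{L^2}\le C\|u\|_{L^6}\le C\|\nabla u\|_{L^2}$ (valid since $u\cdot n|_{\partial\Omega}=0$, so that $C_2=0$ in \eqref{f1}), I get
\begin{equation*}
\|\dot u\|_{L^6}\le C\big(\|\nabla\dot u\|_{L^2}+\|u\|_{L^4(\partial\Omega)}^2\big)\le C\big(\|\nabla\dot u\|_{L^2}+\|\nabla u\|_{L^2}^2\big),
\end{equation*}
which is \eqref{2.26}.

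For \eqref{d12} the idea is to subtract from $\dot u$ an explicit quadratic correction that restores the boundary condition. Extending $n$ smoothly off $\partial\Omega$ and multiplying by a cut-off supported near $\partial\Omega$, still denoting the resulting field by $n$, set
\begin{equation*}
g\triangleq (u\cdot\nabla n)\times u^{\bot},\qquad u^{\bot}\triangleq -u\times n,\qquad \psi\triangleq \dot u+g .
\end{equation*}
A direct computation with the vector triple product and $u\cdot n|_{\partial\Omega}=0$ gives $g\cdot n=u\cdot\nabla n\cdot u$ on $\partial\Omega$, so $\psi\cdot n|_{\partial\Omega}=0$ by the boundary identity. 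Since $\Omega$ is simply connected, Lemma \ref{lem:2.4} together with $\|\divv g\|_{L^2}+\|\curl g\|_{L^2}\le C\|\nabla g\|_{L^2}$ gives
\begin{equation*}
\|\nabla\dot u\|_{L^2}\le\|\nabla\psi\|_{L^2}+\|\nabla g\|_{L^2}\le C\big(\|\divv\dot u\|_{L^2}+\|\curl\dot u\|_{L^2}+\|\nabla g\|_{L^2}\big).
\end{equation*}
Since $g$ is algebraically quadratic in $u$ with smooth bounded coefficients, $\nabla g$ consists only of terms of type $|u|\,|\nabla u|$ and $|u|^2$, so, using $\|u\|_{L^4}\le C\|u\|_{H^1}\le C\|\nabla u\|_{L^2}\le C\|\nabla u\|_{L^4}$,
\begin{equation*}
\|\nabla g\|_{L^2}\le C\big(\|u\|_{L^4}\|\nabla u\|_{L^4}+\|u\|_{L^4}^2\big)\le C\|\nabla u\|_{L^4}^2,
\end{equation*}
and \eqref{d12} follows.

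The hard part is \eqref{d12}: one must exhibit a correction $g$ that simultaneously cancels the normal trace of $\dot u$ on $\partial\Omega$ \emph{and} is purely quadratic in $u$, so that differentiating $g$ costs only one $L^4$-derivative of $u$ rather than the uncontrollable $\nabla^2u$; this is exactly what the Navier--slip identity $\dot u\cdot n=-u\cdot\nabla n\cdot u$ makes possible, through the vector-calculus identity verified above. In \eqref{2.26} the parallel difficulty---that $\dot u$ violates \emph{all} the conditions in \eqref{a5}---is instead circumvented by the normal-trace Sobolev--Poincar\'e inequality, which involves only $\dot u\cdot n$ and hence never meets the uncontrolled tangential boundary values of $u_t$.
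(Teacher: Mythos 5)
The proposal is correct. The paper does not prove Lemma~\ref{lem:2.7} itself but cites \cite[Lemma 3.2]{CGC}, and the proof sketched in the paper's introduction for handling $\dot u$ on the boundary is precisely the corrector device you use for \eqref{d12}: the identity $\dot u\cdot n=-u\cdot\nabla n\cdot u$ on $\partial\Omega$ and the quadratic field $g=(u\cdot\nabla n)\times u^{\perp}$ restoring $(\dot u+g)\cdot n|_{\partial\Omega}=0$, after which Lemma~\ref{lem:2.4} applies; your verification that $g\cdot n=(u\cdot\nabla n)\cdot(u^{\perp}\times n)=u\cdot\nabla n\cdot u$ on $\partial\Omega$ and that $\|\nabla g\|_{L^2}\le C\|\nabla u\|_{L^4}^2$ is sound. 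For \eqref{2.26} you add an extra ingredient --- the normal-trace Sobolev--Poincar\'e inequality $\|v\|_{L^6}\le C(\|\nabla v\|_{L^2}+\|v\cdot n\|_{L^2(\partial\Omega)})$, established by a compactness/contradiction argument using that a constant vector with $v\cdot n\equiv 0$ on the closed boundary vanishes --- and this is in fact necessary: running the corrector argument alone would only deliver $\|\dot u\|_{L^6}\le C(\|\nabla\dot u\|_{L^2}+\|\nabla u\|_{L^4}^2)$, since $\|g\|_{L^6}$ and $\|\nabla g\|_{L^2}$ each cost an $L^4$ norm of $\nabla u$, whereas the boundary route pays only $\|u\|_{L^4(\partial\Omega)}^2\le C\|u\|_{H^1(\Omega)}^2\le C\|\nabla u\|_{L^2}^4$ (and hence, after the square root implicit in your line, $\|\nabla u\|_{L^2}^2$). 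All auxiliary steps --- the trace embedding $H^1(\Omega)\hookrightarrow L^4(\partial\Omega)$, the Poincar\'e bound $\|u\|_{H^1}\le C\|\nabla u\|_{L^2}$ from $(u\cdot n)|_{\partial\Omega}=0$ via \eqref{f1} with $C_2=0$, and $\|u\|_{L^4}\le C\|\nabla u\|_{L^4}$ --- are correct. The only cosmetic remark is that the use of Lemma~\ref{lem:2.4} requires $\Omega$ simply connected, which Theorem~\ref{thm:main} provides but which the heading ``under the assumption of Lemma~\ref{l23}'' does not make explicit; that, however, is an ambiguity in the paper's statement, not a gap in your argument.
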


Finally, we need the following Zlotnik's inequality (see \cite[Lemma 1.3]{Z}), by which we can get the uniform (in time) upper bound of the density $\rho$.

\begin{lemma}\label{lem:2.8}
Suppose the function \( y \) satisfies
\begin{equation}
y'(t) = g(y) + b'(t) \quad \text{on } [0, T], \quad y(0) = y_{0},
\end{equation}
with \( g \in C(\mathbb{R}) \) and \( y, b \in W^{1,1}(0, T) \). If \( g(\infty) = -\infty \) and
\begin{equation}
b(t_{2}) - b(t_{1}) \leq N_{0} + N_{1}(t_{2} - t_{1})
\end{equation}
for all \( 0 \leq t_{1} < t_{2} \leq T \) with some \( N_{0} \geq 0 \) and \( N_{1} \geq 0 \), then
\begin{equation}
y(t) \leq \max \{y_{0}, \xi^*\} + N_{0} < \infty \quad \text{on } [0, T],\label{2.31}
\end{equation}
where \( \xi^* \) is a constant such that
\begin{equation}
g(\xi) \leq -N_{1} \quad \text{for } \xi \geq \xi^*.
\end{equation}
\end{lemma}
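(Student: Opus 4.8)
The plan is to prove the bound \eqref{2.31} by a direct comparison argument organized around the \emph{last} time the trajectory $y$ touches the threshold level $\xi^*$. As a preliminary step, observe that the constant $\xi^*$ in the statement exists: since $g(\infty)=-\infty$, for the given $N_1\ge 0$ one may pick $\xi^*$ with $g(\xi)\le -N_1$ for all $\xi\ge\xi^*$, and then $\max\{y_0,\xi^*\}+N_0$ is a finite real number, so only the inequality itself needs to be established. Because $y,b\in W^{1,1}(0,T)$, both functions are absolutely continuous on $[0,T]$, the equation holds in the form $y'=g(y)+b'$ for a.e.\ $t$, and the map $r\mapsto g(y(r))$ is continuous — hence integrable — on every subinterval; integrating then gives
\[
y(t)-y(s)=\int_s^t g(y(r))\,dr+\big(b(t)-b(s)\big),\qquad 0\le s\le t\le T.
\]

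Now fix $t\in[0,T]$. If $y(t)\le\xi^*$ then $y(t)\le\xi^*\le\max\{y_0,\xi^*\}+N_0$ and there is nothing to prove, so assume $y(t)>\xi^*$ and consider the set $S\triangleq\{s\in[0,t]:y(s)\le\xi^*\}$, which is closed by continuity of $y$. If $S=\emptyset$, then $y(s)>\xi^*$ and hence $g(y(s))\le -N_1$ for every $s\in[0,t]$, so the displayed identity with $s=0$, combined with $b(t)-b(0)\le N_0+N_1t$, gives $y(t)\le y_0-N_1t+N_0+N_1t=y_0+N_0\le\max\{y_0,\xi^*\}+N_0$. If $S\ne\emptyset$, put $\tau\triangleq\sup S$; since $y(t)>\xi^*$ we have $t\notin S$ and therefore $\tau<t$, while closedness of $S$ yields $\tau\in S$, i.e.\ $y(\tau)\le\xi^*$. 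On the other hand $y(s)>\xi^*$ for every $s\in(\tau,t]$ by definition of $\tau$, so letting $s\downarrow\tau$ forces $y(\tau)\ge\xi^*$; hence $y(\tau)=\xi^*$. Consequently $y(s)\ge\xi^*$ and so $g(y(s))\le -N_1$ on all of $[\tau,t]$, and the displayed identity with $s=\tau$, combined with $b(t)-b(\tau)\le N_0+N_1(t-\tau)$, gives $y(t)\le\xi^*-N_1(t-\tau)+N_0+N_1(t-\tau)=\xi^*+N_0\le\max\{y_0,\xi^*\}+N_0$. Since $t\in[0,T]$ was arbitrary, \eqref{2.31} follows.

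I do not anticipate any genuine obstacle in this argument; the only point that requires a little care is the existence and identification of the "last crossing time" $\tau$ — namely, that the level set $S$ is closed so that its supremum is attained, and that $y(\tau)$ equals $\xi^*$ exactly, which is precisely where the continuity supplied by the $W^{1,1}$ hypothesis is used. Everything else reduces to the elementary integral identity above together with the hypothesized linear bound on the increments of $b$ and the defining property of $\xi^*$.
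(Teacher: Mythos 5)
The paper does not prove Lemma \ref{lem:2.8}; it is cited directly from Zlotnik \cite{Z} with no argument reproduced. Your proof is correct and self-contained: the integrated form of the ODE is justified by absolute continuity of $y,b\in W^{1,1}$ together with continuity of $g\circ y$, and the case split on the last crossing time $\tau=\sup\{s\le t:y(s)\le\xi^*\}$ is handled carefully (closedness of the level set gives $\tau\in S$, continuity from the right gives $y(\tau)=\xi^*$). On $[\tau,t]$ (or on $[0,t]$ if the trajectory never dips below $\xi^*$) you have $g(y)\le -N_1$, so the growth of $b$ is absorbed except for the additive constant $N_0$, yielding exactly $\xi^*+N_0$ (resp.\ $y_0+N_0$), both dominated by $\max\{y_0,\xi^*\}+N_0$. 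This is the standard last-touching-time comparison argument for Zlotnik-type estimates and is a legitimate proof of the cited result; since the paper offers no proof, there is nothing to contrast it with.
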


\section{ A prior estimates}\label{sec3}

In this section we will establish the time-independent {\it a priori} bounds of the solutions to the problem \eqref{a1}--\eqref{a5}. Let \( T > 0 \) be fixed and \( (\rho, u, H) \) be a smooth solution of \eqref{a1}--\eqref{a5} in \( \Omega\times (0, T] \). Moreover, we set \( \sigma(t) \triangleq \min\{1, t\} \) and define
\begin{align}
A_1(T) &\triangleq \sup_{0\leq t \leq T}\big[\sigma\big(\|\nabla u\|_{L^2}^2+\|\nabla H\|_{L^2}^2\big)\big]
+\int_0^T\sigma\big(\|\sqrt{\rho}\dot{u}\|_{L^2}^2+\|H_t\|_{L^2}^2+\|\curl^2 H\|_{L^2}^2 \big) dt, \label{3.1} \\
A_2(T) &\triangleq \sup_{0 \leq t \leq T}\big[\sigma^2\big(\| \sqrt{\rho}\dot{u}\|_{L^2}^2+\|\curl^2 H\|_{L^2}^2+\|H_t\|_{L^2}^2\big)\big]
+\int_0^T\sigma^2\big(\|\nabla\dot{u}\|_{L^2}^2+\|\nabla H_t\|_{L^2}^2\big)dt, \label{3.2}\\
A_3(T) &\triangleq \sup_{0 \leq t \leq T} \big( \|\nabla u\|_{L^2}^2 + \|\nabla H\|_{L^2}^2 \big).\label{3.3}
\end{align}

This section is entirely devoted to proving the following conclusion.

\begin{proposition}\label{prop:3.1}
Under the conditions of Theorem \ref{thm:main}, there exist positive constants \(\varepsilon\) and \(K\) depending on $\mu$, $\lambda$, $\gamma$, $\nu$, A, $\inf\limits_{\Omega}\phi$, $\| \phi \|_{H^2}$, $\hat{\rho}$, $\Omega$, $M_1$, and $M_2$ such that if \((\rho, u, H)\) is a smooth solution of \eqref{a1}--\eqref{a5} in \(\Omega \times (0, T]\) satisfying
\begin{equation}\label{3.4}
\sup_{\Omega \times [0, T]} \rho \leq 2\hat{\rho}, \quad A_1(T) + A_2(T) \leq 2C_0^{\f12}, \quad A_3(\sigma(T)) \leq 3K,
\end{equation}
then we have
\begin{equation}
\sup_{\Omega \times [0, T]} \rho \leq \frac{7\hat{\rho}}{4}, \quad A_1(T) + A_2(T) \leq C_0^{\f12}, \quad A_3(\sigma(T)) \leq 2K,
\end{equation}
provided \(C_0 \leq \varepsilon\).
\end{proposition}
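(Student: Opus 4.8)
The plan is to close the continuity argument by deriving, one after another, strictly improved versions of each of the three bounds in \eqref{3.4} under the smallness hypothesis $C_0\leq\varepsilon$. The ordering is forced by the structure of the energy hierarchy, so I would proceed as follows. First I would establish the basic energy inequality: multiplying $\eqref{a1}_2$ by $u$, $\eqref{a1}_3$ by $H$, integrating by parts (using the Navier-slip conditions \eqref{a5} to kill the boundary terms in the viscous and magnetic dissipation), and using the relation $\nabla P(\rho_s)=\rho_s\nabla\phi$ to rewrite $\int\rho\nabla\phi\cdot u$ in terms of $\frac{d}{dt}\int G(\rho,\rho_s)\,dx$, one obtains that $\sup_t\int(\frac12\rho|u|^2+G(\rho,\rho_s)+\frac12|H|^2)\,dx+\int_0^T(\mu\|\nabla u\|_{L^2}^2+(\mu+\lambda)\|\divv u\|_{L^2}^2+\nu\|\curl H\|_{L^2}^2)\,dt\leq CC_0$. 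Next, the key auxiliary estimate is the bound on $\|\rho-\rho_s\|_{L^2(\Omega\times(0,T))}$: testing $\eqref{a1}_2$ against $\rho_s^{-1}\mathcal{B}[\rho-\rho_s]$ (Lemma \ref{lem:2.2}), expanding $P(\rho)-P(\rho_s)$ to second order in $\rho-\rho_s$, and controlling the commutator and material-derivative terms using Lemma \ref{lem:2.2} and the energy bound, one gets $\int_0^T\|\rho-\rho_s\|_{L^2}^2\,dt\leq CC_0+(\text{small})\cdot A_1(T)$, as advertised in \eqref{3.64}.

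The core of the argument is then the set of weighted first-order estimates controlling $A_1(T)$ and $A_2(T)$. I would multiply \eqref{1.23} by $\sigma\dot u$ and integrate; the main new terms relative to the Cauchy problem are (i) the boundary integrals $\int_{\partial\Omega}F(\dot u\cdot n)\,dS$ and similar, which I would handle via the identity $(\dot u+(u\cdot\nabla n)\times u^\perp)\cdot n=0$ on $\partial\Omega$ from \cite{CGC} to trade the normal trace of $\dot u$ for a quadratic expression in $u$, thereby avoiding interior second derivatives; (ii) the magnetic coupling $\int\sigma(H\cdot\nabla H)\cdot\dot u$, for which one integrates by parts and uses the $H$-equation; and (iii) the force term $\int\sigma(\rho-\rho_s)\nabla\phi\cdot\dot u$, absorbed using the $L^2$ smallness of $\rho-\rho_s$ and Lemma \ref{lem:2.6}. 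In parallel I would test $\eqref{a1}_3$ by $\sigma H_t$ and by $\sigma\curl^2H$ to get the magnetic-field contributions to $A_1$, and then differentiate $\eqref{a1}_3$ in $t$, test by $\sigma^2 H_t$ to get the $A_2$ magnetic pieces. Differentiating \eqref{1.23} in $t$ and testing by $\sigma^2\dot u$ gives the $\|\nabla\dot u\|_{L^2}^2$ integral; here the boundary terms $\int_{\partial\Omega}F_t(\dot u\cdot n)\,dS$ are again dealt with via the same geometric identity. All nonlinear terms are estimated with Lemma \ref{lem:2.1}, Lemma \ref{lem:2.6} (to convert $\|\nabla F\|$, $\|\nabla u\|_{L^p}$, $\|F\|_{L^\infty}$ into $\|\rho\dot u\|_{L^2}$, $\|H\cdot\nabla H\|_{L^2}$, $\|\rho-\rho_s\|$), and Lemma \ref{lem:2.7}; the upshot should be a closed system of Gronwall-type inequalities yielding $A_1(T)+A_2(T)\leq CC_0^{1/2}+C C_0^{1/2}(A_1+A_2)\leq C_0^{1/2}$ once $\varepsilon$ is small. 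For $A_3(\sigma(T))$, i.e.\ the time-uniform bound on $\|\nabla u\|_{L^2}^2+\|\nabla H\|_{L^2}^2$ on the initial layer $[0,\sigma(T)]$, I would run the same first-order estimate without the $\sigma$ weight over $[0,\sigma(T)]$, using $\|\nabla u_0\|_{L^2}^2\leq M_1$, $\|\nabla H_0\|_{L^2}^2\leq M_2$ and the short time interval to absorb the non-small contributions, which gives $A_3(\sigma(T))\leq 2K$ for $K$ chosen depending on $M_1,M_2$.

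Finally, the density bound: from the estimates above together with Lemma \ref{lem:2.6} (specifically \eqref{115} at $p=\infty$ via Lemma \ref{lem:2.1}'s $L^\infty$ interpolation) and the $A_2$ bound on $\sigma^2\|\sqrt\rho\dot u\|_{L^2}^2$, one controls $\int_0^T\|F\|_{L^\infty}\,dt$. Writing the mass equation along particle paths as $\dot\rho=-\rho\,\divv u=-\frac{\rho}{2\mu+\lambda}(F+P(\rho)-P(\rho_s)+\frac12|H|^2)$ and applying the Zlotnik inequality (Lemma \ref{lem:2.8}) with $g(\rho)=-\frac{\rho}{2\mu+\lambda}(P(\rho)-P(\rho_s))$ (which satisfies $g(\infty)=-\infty$) and $b(t)$ collecting the integral of $F$ and $|H|^2$ terms (shown to satisfy $b(t_2)-b(t_1)\leq N_0+N_1(t_2-t_1)$ with $N_0$ small and $N_1$ controlled), one concludes $\sup\rho\leq\max\{\hat\rho,\xi^*\}+N_0\leq\frac{7\hat\rho}{4}$ by taking $\varepsilon$ small. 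The main obstacle I anticipate is the bookkeeping in the weighted first-order estimates: making sure every boundary integral is genuinely absorbed by the geometric trick rather than generating an uncontrolled interior second-derivative term, and tracking the precise powers of $C_0$ so that the final inequalities are self-improving rather than merely bounded. The magnetic coupling adds one more layer to this, since the $H$-estimates and $u$-estimates are mutually dependent and must be closed simultaneously.
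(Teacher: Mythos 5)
Your proposal follows essentially the same route as the paper: basic energy estimates, the Bogovskii-operator test against $\rho_s^{-1}\mathcal{B}[\rho-\rho_s]$ to control $\int_0^T\|\rho-\rho_s\|_{L^2}^2\,dt$, $\sigma$-weighted and $\sigma^2$-weighted estimates on $\dot u$ and $H_t$ with boundary integrals handled via the geometric identity $(\dot u+(u\cdot\nabla n)\times u^\perp)\cdot n=0$, the unweighted short-time estimate for $A_3(\sigma(T))$, and the Zlotnik inequality for the density bound. The only cosmetic difference is the ordering of the intermediate lemmas (the paper proves the $A_3$ bound before $A_1,A_2$, and defers the Bogovskii/$L^4$-time-integral bounds until after; it also applies Zlotnik in two stages on $[0,\sigma(T)]$ and $[\sigma(T),T]$ separately), but the argument structure and the key estimates are identical.
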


\begin{proof}
Proposition \ref{prop:3.1} is a consequence of the following Lemmas \ref{lem:3.1}--\ref{lem:3.8}.
\end{proof}

We begin with the basic energy estimates of \( (\rho, u, H) \).
\begin{lemma}\label{lem:3.1}
Let $(\rho, u, H)$ be a smooth solution of \eqref{a1}--\eqref{a5} in $\Omega$ $\times$ $(0,T]$ satisfying $\rho\leq 2\hat\rho $, then there is a positive constant $C$ depending only on $\mu$, $\lambda$, $\gamma$, $\nu$, $\hat\rho$, and $\Omega$ such that
\begin{align}\label{3.7}
&\sup_{0 \leq t \leq T}\int\bigg(\f12\rho|u|^2+(\rho-\rho_s)^2+\f12|H|^2\bigg) dx \notag \\
&\quad+ \int_0^T \big[(\lambda+2\mu)\|\divv u\|_{L^2}^2 + \mu\|\curl u\|_{L^2}^2 + \nu\|\curl H\|_{L^2}^2\big]dt \leq C C_0.
\end{align}
\end{lemma}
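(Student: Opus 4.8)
The plan is to derive the basic energy identity by testing the momentum equation $\eqref{a1}_2$ with $u$, the magnetic equation $\eqref{a1}_3$ with $H$, and combining them with an appropriate reformulation of the mass equation $\eqref{a1}_1$ that produces the potential-energy term $G(\rho,\rho_s)$. Concretely, I would multiply $\eqref{a1}_2$ by $u$ and integrate over $\Omega$; using the boundary conditions \eqref{a5} (which, via the identity $\int \Delta u\cdot u\,dx = -\|\curl u\|_{L^2}^2 - \|\divv u\|_{L^2}^2$ plus $\int \nabla\divv u\cdot u\,dx = -\|\divv u\|_{L^2}^2$ valid when $u\cdot n=0$ and $\curl u\times n=0$), the viscous terms yield exactly $(\lambda+2\mu)\|\divv u\|_{L^2}^2 + \mu\|\curl u\|_{L^2}^2$. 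The convective term $\int \divv(\rho u\otimes u)\cdot u\,dx$ cancels against half the time derivative of the kinetic energy after using $\eqref{a1}_1$. The Lorentz force $\int ((\nabla\times H)\times H)\cdot u\,dx$ and the induction nonlinearity $-\int \nabla\times(u\times H)\cdot H\,dx$ combine and cancel (this is the standard MHD energy cancellation), while $\nu\int \nabla\times(\nabla\times H)\cdot H\,dx = \nu\|\curl H\|_{L^2}^2$ using $H\cdot n=0$ and $\curl H\times n=0$.

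The delicate point is the pressure and external-force terms: $\int \nabla P\cdot u\,dx + \int \rho\nabla\phi\cdot u\,dx$. Here I would use the steady-state relation \eqref{state density}, i.e. $\nabla P(\rho_s) = \rho_s\nabla\phi$, to rewrite $\rho\nabla\phi = \frac{\rho}{\rho_s}\nabla P(\rho_s)$, and then show that $\int(\nabla P(\rho) - \frac{\rho}{\rho_s}\nabla P(\rho_s))\cdot u\,dx = \frac{d}{dt}\int G(\rho,\rho_s)\,dx$. This is the standard relative-energy computation: writing $\int \nabla(P(\rho)-P(\rho_s))\cdot u\,dx$ and integrating by parts against $\divv u$, then substituting $\divv u$ via the mass equation $\rho_t + \divv(\rho u) = 0$ and matching terms against the definition $G(\rho,\rho_s) = \rho\int_{\rho_s}^\rho \frac{P(\xi)-P(\rho_s)}{\xi^2}\,d\xi$; because $\rho_s$ is time-independent and $\nabla P(\rho_s)=\rho_s\nabla\phi$ one checks the $\rho_s$-dependent cross terms cancel exactly. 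I expect this identity is where the bulk of the careful bookkeeping lies, though it is entirely analogous to the corresponding computation for compressible Navier--Stokes with potential force (as in \cite{HLX}); the only new feature is carrying $\frac12|H|^2$ along, which poses no difficulty.

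Having assembled the identity
\[
\frac{d}{dt}\int\Big(\tfrac12\rho|u|^2 + G(\rho,\rho_s) + \tfrac12|H|^2\Big)dx + (\lambda+2\mu)\|\divv u\|_{L^2}^2 + \mu\|\curl u\|_{L^2}^2 + \nu\|\curl H\|_{L^2}^2 = 0,
\]
I would integrate in $t$ from $0$ to any $T$, use the equivalence $C^{-1}(\rho-\rho_s)^2 \le G(\rho,\rho_s)\le C(\rho-\rho_s)^2$ (valid since $\underline\rho\le\rho_s\le\bar\rho$ and $\rho\le 2\hat\rho$, with $C=C(\hat\rho)$), and recall the definition of $C_0$ to conclude \eqref{3.7} with a constant depending only on $\mu,\lambda,\gamma,\nu,\hat\rho,\Omega$. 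The hypothesis $\rho\le 2\hat\rho$ enters only through this norm equivalence. No smallness of $C_0$ is needed at this stage; the identity is exact, so the main obstacle is purely the correct derivation of the $G(\rho,\rho_s)$ time-derivative identity and verifying all boundary integrals vanish under \eqref{a5}.
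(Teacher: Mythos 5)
Your proposal is correct and follows essentially the same route as the paper: test the momentum equation with $u$ and the induction equation with $H$ (so the magnetic cross-terms cancel and the boundary conditions \eqref{a5} make the viscous and resistive terms yield exactly $(\lambda+2\mu)\|\divv u\|_{L^2}^2+\mu\|\curl u\|_{L^2}^2$ and $\nu\|\curl H\|_{L^2}^2$), and then use the steady-state relation $\nabla P(\rho_s)=\rho_s\nabla\phi$ together with the mass equation to turn the combined pressure/force contribution into $\frac{d}{dt}\int G(\rho,\rho_s)\,dx$. The paper carries out that last step by multiplying the continuity equation by $\partial_\rho G$ to get \eqref{w8}, which is exactly the ``relative-energy computation'' you describe, so the two arguments are the same up to presentation (and the stray ``$+$'' before $\int\rho\nabla\phi\cdot u\,dx$ is just a sign typo, corrected immediately after when you write $\nabla P(\rho)-\tfrac{\rho}{\rho_s}\nabla P(\rho_s)$).
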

\begin{proof}
Multiplying $\eqref{a1}_1$ by $G'(\rho)$ and integrating the resulting equality over $\Omega$, we have
\begin{align}\label{w8}
\frac{d}{dt} \int G(\rho)dx+\int P\divv udx
-\frac{A\gamma}{\gamma-1}\int\rho^{\gamma-1}_s\divv(\rho u)dx = 0.
\end{align}
According to \eqref{state density} and
\begin{align*}
\Delta u=\nabla\divv u-\curl\curl u,
\end{align*}
we can rewrite $\eqref{a1}_2$ as
\begin{gather}\label{lo}
\rho\dot u - (\lambda + 2\mu) \nabla \divv u + \mu \nabla \times \curl u + \nabla P =H\cdot\nabla H- \f12\nabla|H|^2+ \frac{A\gamma}{\gamma - 1} \rho \nabla \rho^{\gamma - 1}_s.
\end{gather}
Multiplying \eqref{lo} by \( u \), integrating the resulting equality over \( \Omega \), and using \eqref{a5}, one gets that
\begin{align}\label{l1}
& \f12\frac{d}{dt}\int\rho|u|^2dx+(\lambda+2\mu)\int(\divv u)^2 dx
+\mu\int|\curl u|^2dx \notag\\
&=\int P\divv udx-\int H\cdot\nabla u\cdot H dx+\f12\int|H|^2\divv udx
-\frac{A\gamma}{\gamma-1}\int\rho^{\gamma-1}_s\divv(\rho u)dx.
\end{align}
Multiplying $\eqref{a1}_3$ by $H$ and integrating the resultant over $\Omega$, we have
\begin{align}\label{cz1}
\f12\frac{d}{dt}\int |H|^2 dx +\nu\int|\curl H|^2 dx= \int H\cdot\nabla u\cdot H dx- \f12\int |H|^2\divv u dx.
\end{align}
Combining \eqref{w8}, \eqref{l1}, and \eqref{cz1} together and integrating the resultant over $(0,T)$, one then obtains the desired \eqref{3.7}.
\end{proof}

\begin{lemma}\label{lem:2}
Let the condition \eqref{3.4} be satisfied, then there exist positive constants $K$ and $\varepsilon_1$ depending on $\mu$, $\lambda$, $\nu$, $\gamma$, $A$, $\hat\rho$, $\|\phi\|_{H^2}$, and $\inf\limits_{\Omega} \phi$ such that
\begin{align}\label{3.10}
A_3(\sigma(T)) + \int_0^{\sigma(T)}\big(\|\sqrt{\rho}\dot{u}\|_{L^2}^2 + \|H_t\|_{L^2}^2 + \|\curl^2 H\|_{L^2}^2 \big) dt \leq 2K,
\\ \label{3.11}
\sup_{0 \leq t \leq T} \|\nabla H\|_{L^2}^2 + \int_0^T \big( \|H_t\|_{L^2}^2 + \|\curl^2 H\|_{L^2}^2 \big)dt \leq C \|\nabla H_0\|_{L^2}^2,
\end{align}
provided $C_0 \leq \varepsilon_1$.
\end{lemma}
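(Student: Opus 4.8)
The plan is to prove the two estimates separately, both resting on energy identities for the magnetic field $H$ alone. First I would establish \eqref{3.11}, which is essentially a closed estimate for $H$ driven only by $\|\nabla u\|_{L^2}$ and the $H$-energy. Multiply $\eqref{a1}_3$ by $-\curl\curl H$ (equivalently, use the identity $-\Delta H=\curl\curl H$ together with $\divv H=0$), integrate over $\Omega$, and exploit the boundary conditions $H\cdot n=0$, $\curl H\times n=0$ to kill the boundary terms arising from integration by parts. This yields a differential inequality of the form
\begin{align*}
\frac{d}{dt}\|\nabla H\|_{L^2}^2+\nu\|\curl^2 H\|_{L^2}^2\le C\big(\|\nabla u\|_{L^2}^2+\|\nabla H\|_{L^2}^2\big)\|\nabla H\|_{L^2}^2+\cdots,
\end{align*}
after controlling the nonlinear terms $\nabla\times(u\times H)$ via Hölder, Gagliardo--Nirenberg \eqref{f1}, and \eqref{112}. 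To recover $\|H_t\|_{L^2}$ one uses $\eqref{a1}_3$ directly: $H_t=\nabla\times(u\times H)-\nu\curl\curl H$, so $\|H_t\|_{L^2}^2\le C\|\curl^2 H\|_{L^2}^2+C\|\,|u||\nabla H|+|H||\nabla u|\,\|_{L^2}^2$, the latter again handled by Sobolev embedding. Here the key point is that $\|\nabla u\|_{L^2}$ is already under control from the $A_3$-bound in \eqref{3.4} on $[0,\sigma(T)]$ and from the basic energy estimate \eqref{3.7} on $[0,T]$; the $H$-energy $\|H\|_{L^2}^2\le CC_0$ is likewise small by Lemma \ref{lem:3.1}. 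Gronwall's inequality and the smallness of $C_0$ (so that $\|H\|_{L^\infty_t L^4}$ and the cubic terms stay absorbable) then close \eqref{3.11} with the constant depending only on $\|\nabla H_0\|_{L^2}^2$.

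For \eqref{3.10}, the strategy is to combine the $H^1$-estimate for $u$ with the $H$-estimate just obtained, over the short time interval $[0,\sigma(T)]$ where $\sigma\equiv t$. Multiply \eqref{lo} by $\dot u$ (or by $u_t$) and integrate; the dissipation produces $\frac12\frac{d}{dt}\big((\lambda+2\mu)\|\divv u\|_{L^2}^2+\mu\|\curl u\|_{L^2}^2\big)+\int\rho|\dot u|^2\,dx$ on the good side, and on the right one gets the pressure term $\int P\,\divv\dot u$, the potential term $\int(\rho-\rho_s)\nabla\phi\cdot\dot u$, the magnetic terms $\int(H\cdot\nabla H-\frac12\nabla|H|^2)\cdot\dot u$, and commutator terms from $\dot u=u_t+u\cdot\nabla u$. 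The pressure and potential terms are handled by writing $\int P\,\divv\dot u\,dx$ via the effective-flux rewriting, integrating by parts in $t$, and using $\eqref{a1}_1$ to trade $P_t$ for $\divv u$; this is exactly where Lemma \ref{lem:2.6} (the bounds on $F$, $\curl u$, $\nabla u$) enters. The boundary integrals created when integrating $\int\nabla\divv u\cdot\dot u$ and $\int\nabla\times\curl u\cdot\dot u$ by parts are treated with the trick $(\dot u+(u\cdot\nabla n)\times u^{\bot})\cdot n=0$ on $\partial\Omega$ recalled in the introduction, so no interior first derivatives appear on the boundary. Adding a suitable multiple of the $H$-estimate from \eqref{3.11} and of the basic energy estimate, and using $\sigma(t)=t$ to absorb the initial-layer singularity of $\|\sqrt\rho\dot u\|_{L^2}$ at $t=0$, one arrives at a bound of the form $A_3(\sigma(T))+\int_0^{\sigma(T)}(\|\sqrt\rho\dot u\|_{L^2}^2+\|H_t\|_{L^2}^2+\|\curl^2 H\|_{L^2}^2)\,dt\le C(M_1,M_2,\dots)$, which we name $2K$; the constant $K$ is frozen here, depending only on the listed parameters, and smallness of $C_0\le\varepsilon_1$ is used to absorb all the genuinely nonlinear (superquadratic) contributions into the left-hand side.

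The main obstacle I anticipate is the bookkeeping of the coupled nonlinear and boundary terms in the $u$-estimate for \eqref{3.10}: one must simultaneously (i) handle $\int P\divv\dot u$ without losing a derivative, which forces the $\frac{d}{dt}$-into-$P_t$-into-$\divv u$ maneuver and hence a careful use of $\eqref{a1}_1$ and the bound $\rho\le 2\hat\rho$; (ii) control the Lorentz-force contribution $\int(H\cdot\nabla H)\cdot\dot u$, for which one integrates by parts to move the derivative off $H$ and uses $\|\nabla\dot u\|_{L^2}$ (via \eqref{d12}) against $\|H\|_{L^\infty}\|\nabla H\|_{L^2}$ or similar, absorbing $\|\nabla\dot u\|_{L^2}^2$ into the dissipation with a small coefficient; and (iii) dispose of the boundary integrals using the geometric identity on $\partial\Omega$. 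Each piece is individually standard, but making all the small-parameter absorptions consistent — so that what is left is controlled purely by $C_0^{1/2}$ and the fixed constant $K$ — is where care is needed. The interval being $[0,\sigma(T)]$ rather than $[0,T]$ is what makes this tractable: on $\sigma(T)\le 1$ the time integrals of lower-order quantities are automatically bounded, and the weight $\sigma$ is not even needed inside the integrals here.
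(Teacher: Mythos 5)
Your overall strategy --- a combined first-order energy estimate for $(u,H)$ for \eqref{3.10} and a closed $H$-energy estimate for \eqref{3.11}, with absorption closed by the a priori hypothesis $A_3(\sigma(T))\le 3K$ from \eqref{3.4} together with smallness of $C_0$ --- is the same as the paper's, and the order in which you establish the two inequalities is immaterial (the needed smallness of $\int_0^T\|\nabla u\|_{L^2}^4\,dt$ follows from \eqref{3.4} and \eqref{3.7} alone). The paper simply multiplies $\eqref{a1}_2$ by $u_t$ rather than $\dot u$, so all boundary contributions vanish outright from $(u_t\cdot n)|_{\partial\Omega}=0$ and $(\curl u\times n)|_{\partial\Omega}=0$; the $(\dot u+(u\cdot\nabla n)\times u^\perp)\cdot n=0$ device you invoke is not used until the $A_2$-estimate of Lemma \ref{lem:3.2}, where the material derivative is applied to the equation. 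For the $H$-equation, the paper uses the identity $\int|H_t-\nu\Delta H|^2\,dx=\int|H\cdot\nabla u-u\cdot\nabla H-H\divv u|^2\,dx$ to produce $\nu\frac{d}{dt}\|\curl H\|_{L^2}^2+\nu^2\|\curl^2 H\|_{L^2}^2+\|H_t\|_{L^2}^2$ on the left in one shot; your $-\curl\curl H$ multiplier plus a separate recovery of $\|H_t\|_{L^2}$ accomplishes the same thing.

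There is, however, one concrete gap in your plan for \eqref{3.10}. You propose to handle the Lorentz-force contribution $\int(H\cdot\nabla H)\cdot\dot u\,dx$ by integrating by parts onto $\dot u$ and then absorbing $\|\nabla\dot u\|_{L^2}^2$ into the dissipation. But the dissipation available in this lemma is $\|\sqrt\rho\dot u\|_{L^2}^2$ (that is what appears under the time integral in \eqref{3.10}), not $\|\nabla\dot u\|_{L^2}^2$; since $\rho$ may vanish, $\|\sqrt\rho\dot u\|_{L^2}$ does not control $\|\nabla\dot u\|_{L^2}$, and \eqref{d12} goes the wrong way for that purpose. The $\|\nabla\dot u\|_{L^2}^2$ dissipation only appears at the $A_2$ level (Lemma \ref{lem:3.2}), not here. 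The paper avoids this by writing the Lorentz contribution as an exact time derivative $\frac{d}{dt}\int(\tfrac12|H|^2\divv u - H\cdot\nabla u\cdot H)\,dx$ plus commutator terms ($I_1$ in \eqref{3.12}) involving only $H_t$, $\nabla u$, and $H$, which are then controlled by $\|H_t\|_{L^2}$, $\|\curl^2 H\|_{L^2}$, $\|\nabla u\|_{L^2}$, and $\|\nabla H\|_{L^2}$ --- quantities that are either in the dissipation or in $A_3$. You would need to reorganize your estimate of the magnetic term in this $\frac{d}{dt}$-plus-lower-order form (or some equivalent) rather than pushing a derivative onto $\dot u$.
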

\begin{proof}
Multiplying $\eqref{a1}_2$ by $u_t$ and integration by parts, we obtain that
\begin{align}\label{141}
& \frac{1}{2}\frac{d}{dt}\left((2\mu+\lambda)\|\divv u\|_{L^2}^2+\mu\|\curl u\|_{L^2}^2\right) +\|\sqrt{\rho}\dot{u}\|_{L^2}^2 \notag\\
&= \frac{d}{dt} \int \bigg( \frac{1}{2} |H|^2 \divv u - H \cdot \nabla u \cdot H + \left( P(\rho) - P(\rho_s) \right)\divv u+(\rho-\rho_s)\nabla\phi\cdot u\bigg) dx \notag\\
&\quad+ \int \big( H_t \cdot \nabla u \cdot H + H \cdot \nabla u \cdot H_t - H \cdot H_t \divv u\big) dx +\int\big(\rho u \cdot \nabla u \cdot \dot{u}+\rho_t\nabla\phi\cdot u
-P_t \divv u\big) dx.
\end{align}
It follows from $\eqref{a1}_3$ that
\begin{align}\label{3.13}
\nu \frac{d}{dt} \|\curl H\|_{L^2}^2+\nu^2\|\curl^2 H\|_{L^2}^2+ \|H_t\|_{L^2}^2
&= \int|H_t-\nu\Delta H|^2 dx \notag\\
&= \int|H\cdot\nabla u-u\cdot\nabla H-H\divv u|^2 dx,
\end{align}
which combined with \eqref{141} gives that
\begin{align}\label{3.12}
 &\frac{d}{dt}\bigg( \frac{1}{2} (2\mu+ \lambda) \|\divv u\|_{L^2}^2 +\f12\mu \|\curl u\|_{L^2}^2 + \nu \|\curl H\|_{L^2}^2 \bigg) + \|\sqrt{\rho} \dot{u}\|_{L^2}^2 + \|H_t\|_{L^2}^2 + \nu^2 \|\curl^2 H\|_{L^2}^2\notag \\
&= \frac{d}{dt} \int \bigg( \frac{1}{2} |H|^2 (\divv u) - H \cdot \nabla u \cdot H + \left( P(\rho) - P(\rho_s) \right)\divv u+ (\rho-\rho_s)\nabla\phi\cdot u \bigg) dx \notag\\
&\quad+ \int \left( H_t \cdot \nabla u \cdot H + H \cdot \nabla u \cdot H_t- H \cdot H_t\divv u \right) dx - \int P_t (\divv u) dx+ \int \rho u \cdot \nabla u \cdot \dot{u} dx \notag\notag\\
&\quad+ \int |H \cdot \nabla u - u \cdot \nabla H - H \divv u|^2 dx + \int \rho_t\nabla\phi\cdot u dx\triangleq \frac{d}{dt} I_0 + \sum_{i=1}^5 I_i.
\end{align}

Next, we estimate each term $I_i\ (i\in\{1,2,\cdots,5\})$. By Lemma \ref{lem:2.1} and Young's inequality, we have that, for any \( \delta > 0 \),
\begin{align}\label{I_1}
I_1 &\leq C \|H\|_{L^\infty} \|H_t\|_{L^2} \|\nabla u\|_{L^2} \notag\\
&\leq C \|\nabla H\|_{L^2}^{\f12} \|\curl^2 H\|_{L^2}^{\f12} \|H_t\|_{L^2} \|\nabla u\|_{L^2} +C\|\nabla u\|_{L^2}\|H_t\|_{L^2}\|\nabla H\|_{L^2}\notag\\
&\leq\delta\left(\|\curl^2 H\|_{L^2}^2+\|H_t\|_{L^2}^2 \right)+ C(\delta)\big(\|\nabla u\|_{L^2}^2+ 1\big)\|\nabla u\|_{L^2}^2\|\nabla H\|_{L^2}^2.
\end{align}
By $\eqref{a1}_1$ and $P=A\rho^\gamma$, we arrive at
\begin{equation}\label{3.16}
P_t = -u \cdot \nabla P -\gamma P \divv u,
\end{equation}
which together with Lemma \ref{lem:2.6}, \eqref{f5}, \eqref{3.4}, and \eqref{3.7} yields that
\begin{align}\label{I_2}
I_2 &=\int\bigg[\gamma P (\divv u)^2 + \frac{1}{2\mu + \lambda} \Big( F + P - P(\tilde{\rho}) + \frac{1}{2} |H|^2 \Big) \left( u \cdot \nabla P  \right) \bigg]dx \notag\\
&\leq C\|\nabla u\|_{L^2}^2- \frac{1}{2\mu+\lambda}\int \big(\nabla F+ \nabla (P- P(\rho_s))+ H^j\nabla H^j \big)\cdot(P-P(\rho_s))u dx \notag\\
&\quad - \frac{1}{2\mu+\lambda}\int \Big(F+(P-P(\rho_s))+ \f12|H|^2\Big )\divv u(P-P(\rho_s)) dx\notag\\
&\quad + \frac{1}{2\mu+\lambda}\int \Big(F+(P-P(\rho_s))+\f12|H|^2\Big )\rho_s\nabla\phi\cdot u dx\notag\\
&\leq C\big(\|\nabla u\|_{L^2}^2+ \|u\|_{L^6}\|\nabla F\|_{L^2}\|P-P(\rho_s)\|_{L^3}+ \|u\|_{L^6}\||H||\nabla H|\|_{L^2}\|P-P(\rho_s)\|_{L^3}\notag\\
&\quad+ \|\nabla u\|_{L^2}\|P-P(\rho_s)\|_{L^4}^2+ \|\nabla u\|_{L^2}\|F\|_{L^2}\|P-P(\rho_s)\|_{L^\infty}+ \|H\|_{L^4}^2\|P-P(\rho_s)\|_{L^\infty}\|\nabla u\|_{L^2}\notag\\
&\quad+ \|F\|_{L^2}\|u\|_{L^6}\|\nabla\phi\|_{L^3}+ \|P-P(\rho_s)\|_{L^3}\|u\|_{L^6}\|\nabla\phi\|_{L^2}+ \|H\|_{L^6}^2\|u\|_{L^6}\|\nabla\phi\|_{L^2}\big)\notag\\
&\leq C\big(\|\nabla u\|_{L^2}^2+\|P-P(\rho_s)\|_{L^4}^4\big)+ C\|\nabla u\|_{L^2}\big(\|\nabla F\|_{L^2}+ \||H||\nabla H|\|_{L^2}\big)\|P-P(\rho_s)\|_{L^3}\notag\\
&\quad+ C\|\nabla u\|_{L^2}\big(\|\nabla u\|_{L^2}+\|\rho-\rho_s\|_{L^3}+ \|\nabla H\|_{L^2}^2\big)\notag\\
&\leq C\big(\|\nabla u\|_{L^2}^2+ C_0\big)+ CC_0^{\f13}\|\nabla u\|_{L^2}\big(\|\rho\dot u\|_{L^2}+ \|\rho-\rho_s\|_{L^3}+ \||H||\nabla H|\|_{L^2}\big)\notag\\
&\quad+ C\|\nabla u\|_{L^2}\big(\|\nabla u\|_{L^2}+\|\rho-\rho_s\|_{L^3}+ \|\nabla H\|_{L^2}^2\big)\notag\\
&\leq \delta\big(\|\rho\dot u\|_{L^2}^2+ \|\curl^2 H\|_{L^2}^2\big)+ C(\delta)\Big(C_0^\f23+ \|\nabla u\|_{L^2}^2+ \|\nabla u\|_{L^2}^{\f43}\|\nabla H\|_{L^2}^2+ \|\nabla u\|_{L^2}\|\nabla H\|_{L^2}^2\Big),
\end{align}
where we have used
\begin{align}\label{3.19}
\||H||\nabla H|\|_{L^2} \leq C \|H\|_{L^\infty} \|\nabla H\|_{L^2}
\leq C\Big(\|\nabla H\|_{L^2}^{\f32} \|\curl^2 H\|_{L^2}^{\f12}+\|\nabla H\|_{L^2}^2\Big).
\end{align}
Using \eqref{f1}, Lemma \ref{lem:2.6}, \eqref{3.4}, \eqref{3.7}, and \eqref{3.19}, we deduce that
\begin{align}\label{I_3}
I_3&= \int \rho u\cdot\nabla u\cdot\dot u dx\notag\\
&\leq C\|\sqrt{\rho}\dot{u}\|_{L^2} \|\nabla u\|_{L^3} \|u\|_{L^6}
\leq C\|\sqrt{\rho}\dot{u}\|_{L^2} \|\nabla u\|_{L^2}^{\f32} \|\nabla u\|_{L^6}^{\f12} \notag\\
&\leq C\|\sqrt{\rho}\dot u\|_{L^2}\|\nabla u\|_{L^2}^{\f32}\Big(\|\sqrt{\rho}\dot u\|_{L^2}^{\f12}+ \||H||\nabla H|\|_{L^2}^{\f12}+\|\rho-\rho_s\|_{L^6}^{\f12}+\|\nabla u\|_{L^2}^{\f12}\Big)\notag\\
&\leq C\|\sqrt{\rho}\dot{u}\|_{L^2} \|\nabla u\|_{L^2}^{\f32} \Big( 1 + \|\sqrt{\rho}\dot{u}\|_{L^2}^{\f12} + \|\nabla H\|_{L^2}^{\f34} \|\curl^2 H\|_{L^2}^{\f14}+ \|\nabla H\|_{L^2}+ \|\nabla u\|_{L^2}^{\f12} \Big)\notag \\
& \leq \delta \big( \|\sqrt{\rho}\dot{u}\|_{L^2}^2+\|\curl^2 H\|_{L^2}^2 \big)+ C(\delta)\big(1 + \|\nabla u\|_{L^2}^6  + \|\nabla u\|_{L^2}^4 \|\nabla H\|_{L^2}^2 + \|\nabla H\|_{L^2}^6\big),
\\ \label{I_4}
I_4 &\leq C \|u\|_{L^6}^2 \|\nabla H\|_{L^3}^2 + C\|\nabla u\|_{L^2}^2 \|H\|_{L^\infty}^2 \notag\\
&\leq C\big(\|\nabla u\|_{L^2}^2 \|\nabla H\|_{L^2} \|\curl^2 H\|_{L^2}+ \|\nabla u\|_{L^2}^2\|\nabla H\|_{L^2}^2\big)\notag \\
&\leq \delta \|\curl^2 H\|_{L^2}^2 + C(\delta)\big(\|\nabla u\|_{L^2}^2+1\big)\|\nabla u\|_{L^2}^2 \|\nabla H\|_{L^2}^2,
\\ \label{I_5}
I_5 &\leq \int\divv (\rho u)\nabla\phi\cdot u dx = -\int \rho u\cdot\nabla (\nabla\phi\cdot u)dx\notag\\
&\leq C\|u\|_{L^4}^2\|\nabla^2\phi\|_{L^2}+ C\|u\|_{L^3}\|\nabla u\|_{L^2}\|\nabla\phi\|_{L^6}\leq C\|\nabla u\|_{L^2}^2.
\end{align}

Hence, putting \eqref{I_1}, \eqref{I_2}, and \eqref{I_3}--\eqref{I_5} into \eqref{3.12}, integrating the resultant over $(0, \sigma(T))$, and choosing $\delta > 0$ sufficiently small, we conclude from Lemma \ref{lem:2.4}, \eqref{3.4}, \eqref{3.7}, and Young's inequality that
\begin{align}\label{3.24}
&A_3(\sigma(T)) + \int_0^{\sigma(T)}\big(\|\sqrt{\rho}{u}\|_{L^2}^2 + \|H_t\|_{L^2}^2 + \|\curl^2 H\|_{L^2}^2 \big)dt\notag\\
&\leq C \big(1+\|\nabla u_0\|_{L^2}^2 + \|\nabla H_0\|_{L^2}^2 \big)+C C_0^{\f12}\sup_{0 \leq t \leq \sigma(T)} \|\nabla H\|_{L^2}^3 \notag\\
&\quad + C\int_0^{\sigma(T)}\big(\|\nabla u\|_{L^2}^6+\|\nabla u\||_{L^2}^3+ \|\nabla u\|_{L^2}^2+ \|\nabla u\|_{L^2}^4\|\nabla H\|_{L^2}^2+\|\nabla u\|_{L^2}^{\f43}\|\nabla H\|_{L^2}^2 \notag\\
&\quad + \|\nabla u\|_{L^2}\|\nabla H\|_{L^2}^2 + \|\nabla H\|_{L^2}^2+ \|\nabla u\|_{L^2}^2\|\nabla H\|_{L^2}^2+ \|\nabla H\|_{L^2}^3+ \|\nabla H\|_{L^2}^6\big)dt \notag\\
&\leq C(M_1, M_2) + C C_0^{\f12} A_3^{\f32}(\sigma(T))+ CC_0A_3^{2}(\sigma(T))  \notag\\
&\leq K + C C_0^{\frac12} A_3^2(\sigma(T))
\end{align}
with \( K \triangleq C(M_1, M_2) \), where we have used
\begin{align}\label{I_0}
I_0 &\leq C \|\nabla u\|_{L^2} \left( \|P - P(\tilde{\rho})\|_{L^2} + \|H\|_{L^4}^2 \right)+ C\|u\|_{L^6}\|\rho-\rho_s\|_{L^3}\|\nabla\phi\|_{L^2} \notag\\
&\leq C \|\nabla u\|_{L^2}\Big( C_0^{\f12} + \|H\|_{L^2}^{\f12} \|\nabla H\|_{L^2}^{\f32}\Big)+ CC_0^{\f13}\|\nabla u\|_{L^2} \notag\notag\\
&\leq \delta\|\nabla u\|_{L^2}^2+ C(\delta)\Big(C_0^{\f23}+C_0^{\f12}\|\nabla H\|_{L^2}^3\Big)
\end{align}
due to \eqref{3.4}, \eqref{3.7}, and \eqref{f1}.
As an immediate consequence of \eqref{3.24}, one obtains \eqref{3.10} provided that\\
\begin{equation*}
A_3(\sigma(T)) \leq 3K \quad \text{and} \quad C_0 \leq \varepsilon_{1,1} \triangleq \min\{1, \, (9C K)^{-2}\}.
\end{equation*}

Finally, we shall prove \eqref{3.11}. One deduces from \eqref{3.13} and \eqref{I_4} that
\begin{align}\label{3.28}
\sup_{0 \leq t \leq T} \|\nabla H\|_{L^2}^2 + \int_0^T \left( \|H_t\|_{L^2}^2 + \|\curl^2 H\|_{L^2}^2 \right) dt &\leq C \|\nabla H_0\|_{L^2}^2 + C_1 \sup_{0 \leq t \leq T} \|\nabla H\|_{L^2}^2 \int_0^T \|\nabla u\|_{L^2}^4 dt.
\end{align}
Moreover, it follows from \eqref{3.4}, \eqref{3.7}, and \eqref{3.10} that
\begin{align}\label{3.29}
&\int_0^T \big(\|\nabla u\|_{L^2}^4+ \|\nabla H\|_{L^2}^4 \big)dt = \int_0^{\sigma(T)} \big(\|\nabla u\|_{L^2}^4+ \|\nabla H\|_{L^2}^4 \big)dt + \int_{\sigma(T)}^T \big(\|\nabla u\|_{L^2}^4+ \|\nabla H\|_{L^2}^4\big) dt \notag\\
&\leq \sup_{0 \leq t \leq \sigma(T)} \|\nabla u\|_{L^2}^2 \int_0^{\sigma(T)} \|\nabla u\|_{L^2}^2 dt + \sup_{\sigma(T) \leq t \leq T}\big( \sigma \|\nabla u\|_{L^2}^2 \big)\int_{\sigma(T)}^T \|\nabla u\|_{L^2}^2 dt \notag\\
&\quad + \sup_{0 \leq t \leq \sigma(T)} \|\nabla H\|_{L^2}^2 \int_0^{\sigma(T)} \|\nabla H\|_{L^2}^2 dt + \sup_{\sigma(T) \leq t \leq T} \big(\sigma \|\nabla H\|_{L^2}^2 \big)\int_{\sigma(T)}^T \|\nabla H\|_{L^2}^2 dt \leq C(K) C_0,
\end{align}
which combined with \eqref{3.28} leads to \eqref{3.11} provided that
\begin{align*}
C_0 \leq \varepsilon_1 \triangleq \min\left\{ \varepsilon_{1,1}, \, \frac{1}{2 C_1 C(K)} \right\}. \tag*{\qedhere}
\end{align*}
\end{proof}

Next, we give the estimates for $A_1(T)$ and $A_2(T)$.

\begin{lemma}\label{lem:3.2}
Let the condition \eqref{3.4} be satisfied, then there is a positive constant $\varepsilon_{2}$ depending only on $\mu$, $\lambda$, $\nu$, $A$, $\gamma$, $\hat{\rho}$, $\|\phi\|_{H^2}$, and $\inf\limits_{\Omega}\phi$ such that
\begin{gather}\label{3.30}
A_1(T) \leq C C_0 + C \int_0^T \sigma^2 \|P-P(\rho_s)\|_{L^4}^4 dt,
\\ \label{3.31}
A_2(T) \leq C C_0 + C A_1(T) + C \int_0^T \sigma^2 \|\nabla u\|_{L^4}^4 dt,
\end{gather}
provided $C_0 \leq \varepsilon_{2}$.
\end{lemma}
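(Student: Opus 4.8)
The plan is to carry out two successive energy-type estimates at the level of $\dot u$ (for $A_1$) and time-weighted $\dot u$ (for $A_2$), each obtained by differentiating the momentum equation $\eqref{lo}$ and testing against $\sigma^k\dot u$ with $k=1,2$. First, to establish $\eqref{3.30}$, I would multiply $\eqref{lo}$ (or equivalently the rewritten form $\eqref{1.23}$ together with $H\cdot\nabla H$) by $\sigma\dot u$ and integrate over $\Omega$. The left side produces $\tfrac{d}{dt}\big[\sigma(\cdots\|\divv u\|_{L^2}^2+\cdots\|\curl u\|_{L^2}^2)\big]$ plus $\sigma\|\sqrt\rho\,\dot u\|_{L^2}^2$ up to $\sigma'(\cdots)$ terms which are supported on $[0,1]$ and controlled by Lemma \ref{lem:2}. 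On the right we encounter the convective term $\int\sigma\rho u\cdot\nabla u\cdot\dot u$, the pressure commutator $\int\sigma P_t\divv\dot u$ — handled via $\eqref{3.16}$, $P_t=-u\cdot\nabla P-\gamma P\divv u$, integrating by parts and using $\divv\dot u=\divv u_t+\divv(u\cdot\nabla u)$ — the potential-force term $\int\sigma(\rho-\rho_s)\nabla\phi\cdot\dot u$, and the magnetic terms $\int\sigma(H\cdot\nabla H)\cdot\dot u$. Each of these is bounded using Lemmas \ref{lem:2.1}, \ref{lem:2.6}, \ref{lem:2.7}, the basic energy bound $\eqref{3.7}$, the a priori hypothesis $\eqref{3.4}$, and Young's inequality, absorbing $\delta\sigma\|\nabla\dot u\|_{L^2}^2$ and $\delta\sigma\|\nabla H_t\|_{L^2}^2$ into the left side after also differentiating $\eqref{a1}_3$ in time and testing against $\sigma H_t$ to produce $\tfrac{d}{dt}(\sigma\|H_t\|_{L^2}^2)+\sigma\|\nabla H_t\|_{L^2}^2$ plus lower-order terms. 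Integrating in $t$ over $(0,T)$, the boundary-value-at-$t=0$ contributions are absorbed into $CC_0$ (using that $\sigma(0)=0$ kills the genuinely weighted pieces and Lemma \ref{lem:2} controls the rest on $[0,\sigma(T)]$), and the only term that resists being absorbed or bounded by $CC_0$ is the quartic pressure term, which is left explicit as $C\int_0^T\sigma^2\|P-P(\rho_s)\|_{L^4}^4\,dt$ — this yields $\eqref{3.30}$.

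For $\eqref{3.31}$, I would repeat the scheme with the extra weight $\sigma^2$: multiply the time-differentiated momentum equation by $\sigma^2\dot u$ and the time-differentiated magnetic equation by $\sigma^2 H_t$. The structure is the same, but now the energy identity reads $\tfrac{d}{dt}\big[\sigma^2(\|\sqrt\rho\dot u\|_{L^2}^2+\|H_t\|_{L^2}^2+\|\curl^2H\|_{L^2}^2)\big]+\sigma^2(\|\nabla\dot u\|_{L^2}^2+\|\nabla H_t\|_{L^2}^2)\lesssim 2\sigma\sigma'(\cdots)+(\text{nonlinear})$. The term $2\sigma\sigma'\|\sqrt\rho\dot u\|_{L^2}^2$ etc.\ is supported on $[0,1]$ and integrates to something bounded by $\int_0^{\sigma(T)}\|\sqrt\rho\dot u\|_{L^2}^2\,dt\le CA_1(T)$, explaining the $CA_1(T)$ on the right of $\eqref{3.31}$. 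Among the nonlinear terms, the convective term $\int\sigma^2\rho u\cdot\nabla u\cdot\dot u_t$ (after integrating by parts in $t$ inside the material derivative) and the commutators involving $\nabla u\cdot\nabla u$ generate, after Hölder and Gagliardo--Nirenberg, contributions bounded by $\delta\sigma^2\|\nabla\dot u\|_{L^2}^2$ plus $C\sigma^2\|\nabla u\|_{L^4}^4$ plus terms already controlled by $CC_0$ via $\eqref{3.7}$, $\eqref{3.10}$, and $\eqref{3.11}$; the pressure and potential-force terms are similarly estimated using $\eqref{2.12}$ and $\|\rho-\rho_s\|_{L^q}\le C$. Integrating over $(0,T)$ and choosing $\delta$ small gives $\eqref{3.31}$.

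The main obstacle I anticipate is the careful treatment of the convective and pressure-commutator terms $\int\sigma^k P_t\divv\dot u\,dx$ and $\int\sigma^k\rho(u\cdot\nabla u)\cdot\dot u\,dx$, because the material derivative $\dot u$ does not satisfy a homogeneous boundary condition directly — one must exploit $(u\cdot n)|_{\partial\Omega}=0$ to rewrite $u\cdot\nabla u\cdot n=-u\cdot\nabla n\cdot u$ and hence $(\dot u+(u\cdot\nabla n)\times u^{\perp})\cdot n=0$ on $\partial\Omega$, as indicated in the introduction, so that Lemma \ref{lem:2.7} applies to $\dot u$ after subtracting the correction $(u\cdot\nabla n)\times u^{\perp}$; the resulting surface/lower-order terms must then be shown to be harmless. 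A secondary difficulty is that the magnetic terms $H\cdot\nabla H$ and $\nabla\times(u\times H)$ must be estimated jointly with the fluid part so that the dissipation $\sigma^k(\|\nabla\dot u\|_{L^2}^2+\|\nabla H_t\|_{L^2}^2)$ genuinely closes; this relies on $\eqref{112}$, $\eqref{3.19}$, and the $\|\curl^2H\|_{L^2}$ control already secured in Lemma \ref{lem:2}. Everything else is routine bookkeeping with Young's inequality and the smallness of $C_0$.
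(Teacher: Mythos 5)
Your overall scheme for the momentum equation is essentially the paper's: for $\eqref{3.30}$ the paper weights the already-derived identity $\eqref{3.12}$ by $\sigma$ and integrates, reusing the bounds on $I_0,\dots,I_5$ from the previous lemma, and for $\eqref{3.31}$ it applies $\sigma^2\dot u_j[\partial_t+\divv(u\,\cdot)]$ to the rewritten momentum equation $\eqref{1234}$, with the boundary integral $\int_{\partial\Omega}\sigma^2 F_t\,\dot u\cdot n\,dS$ handled via $u\cdot\nabla u\cdot n=-u\cdot\nabla n\cdot u$. You correctly identify the quartic pressure and $\|\nabla u\|_{L^4}^4$ terms as the ones that must remain explicit, and the $\sigma'$-terms being supported on $[0,\sigma(T)]$.

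However, there is a genuine gap in your handling of the magnetic field for $\eqref{3.30}$. You propose time-differentiating $\eqref{a1}_3$ and testing against $\sigma H_t$, which produces
$\tfrac{d}{dt}\bigl(\sigma\|H_t\|_{L^2}^2\bigr)+\sigma\|\nabla H_t\|_{L^2}^2$ plus lower order. After time-integration that controls $\sup_t\sigma\|H_t\|_{L^2}^2$ and $\int_0^T\sigma\|\nabla H_t\|_{L^2}^2\,dt$. But $A_1(T)$ contains none of these: it requires
$\sup_t\sigma\|\nabla H\|_{L^2}^2$ together with $\int_0^T\sigma\bigl(\|H_t\|_{L^2}^2+\|\curl^2 H\|_{L^2}^2\bigr)\,dt$. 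Your estimate does not yield these quantities — in particular it offers no mechanism to extract $\int\sigma\|H_t\|_{L^2}^2$ or $\int\sigma\|\curl^2 H\|_{L^2}^2$, nor does it control $\sup\sigma\|\nabla H\|_{L^2}^2$ — and the $\delta\sigma\|\curl^2H\|_{L^2}^2$ and $\delta\sigma\|H_t\|_{L^2}^2$ terms arising from $I_1$ and $I_4$ would then have nowhere to be absorbed. The paper instead uses a different and crucial device: it squares the original magnetic equation, i.e. writes
$H_t-\nu\Delta H = H\cdot\nabla u-u\cdot\nabla H-H\divv u$
and integrates $|H_t-\nu\Delta H|^2=|H\cdot\nabla u-u\cdot\nabla H-H\divv u|^2$, yielding the identity
$\nu\tfrac{d}{dt}\|\curl H\|_{L^2}^2+\nu^2\|\curl^2 H\|_{L^2}^2+\|H_t\|_{L^2}^2=\int|H\cdot\nabla u-u\cdot\nabla H-H\divv u|^2\,dx$
(which is $\eqref{3.13}$). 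Weighted by $\sigma$ and added to the $u_t$-identity $\eqref{141}$, this delivers exactly the magnetic ingredients of $A_1(T)$ and absorbs the $\delta$-terms. Your proposal omits this and would not close; you effectively imported the $A_2$-level magnetic argument (time-differentiated equation tested against $\sigma^2 H_t$, which the paper does use for $\eqref{3.31}$) into the $A_1$ step, where it does not serve.

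A secondary point: at the end of the $A_2$ estimate one must still recover $\sup_t\sigma^2\|\curl^2 H\|_{L^2}^2$, which is not produced directly by either the time-differentiated magnetic equation or the $J$-terms; the paper obtains it a posteriori from the original magnetic equation (estimate $\eqref{3.62}$), bounding $\|\curl^2 H\|_{L^2}^2\lesssim\|H_t\|_{L^2}^2+\|\nabla H\|_{L^2}^2\|\nabla u\|_{L^2}^4+\|\nabla u\|_{L^2}^2\|\nabla H\|_{L^2}^2$. Your sketch does not mention this step, though it is needed to complete $A_2(T)$.
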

\begin{proof}
Multiplying \eqref{3.12} by \(\sigma(t) \) and integrating the resulting equation over \((0, T)\), we obtain from \eqref{3.4}, \eqref{3.7}, \eqref{I_1}, \eqref{I_2}, \eqref{I_3}--\eqref{I_5}, and \eqref{I_0} that
\begin{align*}
A_1(T) &\leq C \sigma(\|\nabla u_0\|_{L^2}^2+ \|\nabla H_0\|_{L^2}^2) + C \sigma'\int_0^T\bigg(\f12\mu\|\curl u\|_{L^2}^2+ \f12(2\mu +\lambda)\|\divv u\|_{L^2}^2 + \nu\|\curl H\|_{L^2}^2\bigg)dt \notag\\
&\quad + C\int_{0}^{T}\int\sigma'\bigg(\f12|H|^2\divv u- H\cdot\nabla u\cdot H+ (P-P(\rho_s))\divv u+ (\rho-\rho_s)\nabla\phi\cdot u\bigg)dx dt\notag\\
&\quad + C\sigma \sup_{0 \leq t \leq T} I_0+ C\int_{0}^{T} \sigma\sum_{i=1}^5 I_i dt \notag\\
&\leq CC_0^{\f12}\big(1+\sup_{0 \leq t \leq T}\|\nabla H\|_{L^2}\sup_{0 \leq t \leq T}\big(\sigma\|\nabla H\|_{L^2}^2\big)\big)+ C\int_{0}^{T}\sigma\big(\|\nabla u\|_{L^2}^6+\|\nabla u\||_{L^2}^3+ \|\nabla u\|_{L^2}^2\notag\\
&\quad + \|\nabla u\|_{L^2}^4\|\nabla H\|_{L^2}^2+ \|\nabla u\|_{L^2}\|\nabla H\|_{L^2}^2+ \|\nabla H\|_{L^2}^2+ \|\nabla u\|_{L^2}^2\|\nabla H\|_{L^2}^2 + \|\nabla H\|_{L^2}^6\big)dt\notag\\
&\quad+ C\int_{0}^{T}\sigma^2\|P-P(\rho_s)\|_{L^4}^4 dt + C\int_{0}^{\sigma(T)}\sigma'\big(\|\nabla u\|_{L^2}^2+ \|\nabla H\|_{L^2}^2+ \|\rho-\rho_s\|_{L^2}\big) dt\notag\\
&\leq CC_0^{\f12}+ C\int_{0}^{T}\sigma^2\|P-P(\rho_s)\|_{L^4}^4 dt.
\end{align*}

It is noted that $\eqref{a1}_2$ can be rewritten as
\begin{align}\label{1234}
\rho\dot u= \nabla F- \mu\curl\curl u+ H\cdot\nabla H+ (\rho-\rho_s)\nabla\phi.
\end{align}
Operating \(\sigma^2 \dot{u}_j [\partial/\partial t + \divv(u \cdot)]\) to $\eqref{1234}^j$, summing with respect to \(j\), and integrating over \(\Omega\), we get that
\begin{align}\label{3.34}
&\frac{d}{dt} \int\frac{\sigma^2}{2} \rho |\dot{u}|^2 dx -  \int \sigma \sigma' \rho |\dot{u}|^2 dx \notag \\
& = \int \sigma^2 \left( \dot{u} \cdot \nabla F_t + \dot{u}^j \divv(u \partial_j F) \right) dx+ \mu \int \sigma^2 \left( -\dot{u} \cdot \nabla \times \text{curl} u_t - \dot{u}^j  \divv((\nabla \times \text{curl} u)^j u) \right) dx \notag\\
&\quad+ \int \sigma^2(\dot u^j\partial_t(H^i\partial_i H^j))+ \dot u^j\divv (H^i\partial_i H^j u)) dx+ \int \sigma^2 \left( \rho_t \dot{u} \cdot \nabla \phi + \dot{u}^j
\divv((\rho - \rho_s) \partial_j \psi u) \right) dx
\notag\\
& \triangleq J_1 + \mu J_2 + J_3 + J_4.
\end{align}
For \( J_1 \), by \eqref{a5}, Lemma \ref{lem:2.6}, and \eqref{3.16}, a direct computation yields that
\begin{align}\label{3.35}
J_1 &= \int_{\partial\Omega} \sigma^2 F_t \dot{u} \cdot n  dS - \int \sigma^m F_t  \divv\dot{u}  dx - \int \sigma^2 u \cdot \nabla\dot{u}^j \partial_j F dx \notag\\
&= \int_{\partial\Omega} \sigma^2 F_t \dot{u} \cdot n  dS - (\lambda + 2\mu) \int \sigma^2 (\divv\dot{u})^2  dx + (\lambda + 2\mu) \int \sigma^2 \divv\dot{u}\partial_iu^j\partial_ju^idx \notag\\
&\quad - \gamma \int \sigma^2 P \divv\dot{u} \divv u \, dx + \int \sigma^2 \divv\dot{u}  u \cdot \nabla F \, dx - \int \sigma^2 u \cdot \nabla\dot{u}^j  \partial_j F  dx\notag \\
&\quad + \int \sigma^2\divv\dot{u}  H \cdot H_t  dx + \f12\int \sigma^2 \divv\dot{u}  u \cdot \nabla |H|^2   dx + \int \sigma^2 u\cdot\nabla P(\rho_s)\divv\dot u dx\notag\\
&\leq \int_{\partial\Omega} \sigma^2 F_t \dot{u} \cdot n  dS - (\lambda + 2\mu) \int \sigma^2 (\divv\dot{u})^2  dx + \frac{\delta}{12} \sigma^2 \|\nabla\dot{u}\|_{L^2}^2 + C \sigma^2 (\|\nabla u\|_{L^4}^4+ \|\nabla u\|_{L^2}^2)\notag\\
&\quad + C \sigma^2\big(\|\nabla u\|_{L^2}^2 \|\nabla F\|_{L^3}^2 + C_0^{\f14} \|\nabla H_t\|_{L^2}^2 + \|\nabla u\|_{L^2}^2 \|\nabla H\|_{L^2}^2 \|\nabla H\|_{L^6}^2\big),
\end{align}
where in the third inequality we have used
\begin{align*}
F_t &= (2\mu + \lambda)\divv u_t - P_t- H\cdot H_t \notag\\
&= (2\mu + \lambda)\divv\dot{u}-(2\mu + \lambda)\divv(u \cdot \nabla u) + u \cdot \nabla P + \gamma P \divv u- H\cdot H_t \notag\\
&= (2\mu + \lambda)\divv\dot{u}-(2\mu + \lambda)\partial_iu^j\partial_ju^i - u \cdot \nabla F + \gamma P\divv u+ u\cdot\nabla P(\rho_s)- \f12u\cdot \nabla|H|^2- H\cdot H_t.
\end{align*}
For the first term on the right-hand side of \eqref{3.35}, we get that
\begin{align}\label{3.37}
&\int_{\partial\Omega} \sigma^2 F_t \dot{u} \cdot n dS\notag\\
&  = - \int_{\partial\Omega} \sigma^2 F_t (u \cdot \nabla n \cdot u) dS\notag\\
& = - \Big(\int_{\partial\Omega}\sigma^2 (u \cdot \nabla n \cdot u) F dS\Big)_t+ 2 \sigma \sigma' \int_{\partial\Omega} (u \cdot \nabla n \cdot u) F dS + \sigma^2 \int_{\partial\Omega} F \dot{u} \cdot \nabla n \cdot u dS \notag\\
&\quad+ \sigma^2 \int_{\partial\Omega} F u \cdot \nabla n \cdot \dot{u} dS - \sigma^2 \int_{\partial\Omega} F (u \cdot \nabla) u \cdot \nabla n \cdot u  dS - \sigma^2 \int_{\partial\Omega} F u \cdot \nabla n \cdot (u \cdot \nabla) u dS \notag\\
&\leq - \left( \int_{\partial\Omega} \sigma^2 (u \cdot \nabla n \cdot u) F dS \right)_t + C \sigma \sigma' \|\nabla u\|_{L^2}^2 \|F\|_{H^1} + \frac{\delta}{12} \sigma^2 \|\nabla\dot{u}\|_{L^2}^2 + C \sigma^2 (\|\nabla u\|_{L^2}^4 +  \|\nabla u\|_{L^2}^2 \|F\|_{H^1}^2) \notag\\
&\quad + C\sigma^2 \|\nabla F\|_{L^6} \|\nabla u\|_{L^2}^3 + C\sigma^2 \|F\|_{H^1} \|\nabla u\|_{L^2} \left( \|\nabla u\|_{L^4}^2 + \|\nabla u\|_{L^2}^2 \right),
\end{align}
where we have utilized
\begin{align*}
\Big|\int_{\partial\Omega}(u\cdot\nabla n\cdot u)F dS\Big| \leq C\|\nabla u\|_{L^2}^2 \|F\|_{H^1}.
\end{align*}
Due to
\begin{align*}
u = -(u \times n) \times n = u^{\perp} \times n \ \text{ on } \partial \Omega
\end{align*}
and
\begin{align*}
\divv\big(\nabla u^{i} \times u^{\perp}\big)= u^{\perp}\cdot\curl\nabla u^i-\nabla u^{i} \cdot \curl u^{\perp}= -\nabla u^{i} \cdot \curl u^{\perp},
\end{align*}
where $u^{\perp} \triangleq -u \times n$. Thus, we obtain that
\begin{align}\label{3.40}
- \int_{\partial \Omega} F(u \cdot \nabla) u \cdot \nabla n \cdot u dS
= & -\int_{\partial \Omega} n \cdot \big(\nabla u^{i} \times u^{\perp}\big) \nabla_{i} n \cdot Fu dS \notag\\
= & -\int \divv\big(\big(\nabla u^{\perp} \times u^{\perp}\big) \nabla_{i} n \cdot Fu \big) dx \notag\\
= & -\int \nabla\left(\nabla_{i} n \cdot Fu \right) \cdot \big(\nabla u^{i} \times u^{\perp}\big) dx + \int \nabla u^{i} \cdot \curl u^{\perp} \nabla_{i} n \cdot Fu  dx \notag\\
\leq & C \int |\nabla F||\nabla u||u|^2 dx + C \int |F|\left(|\nabla u|^{2}|u| + |\nabla u||u|^{2}\right) dx \notag\\
\leq &C\|\nabla F\|_{L^6}\|\nabla u\|_{L^2}^3+ C\|F\|_{H^1}\|\nabla u\|_{L^2}\big(\|\nabla u\|_{L^4}^4+ \|\nabla u\|_{L^2}^2\big),
\end{align}
Similarly, we can get
\begin{align}\label{3.41}
-\int_{\partial\Omega} Fu\cdot\nabla n\cdot(u\cdot\nabla)u dS
\leq C\|\nabla F\|_{L^6}\|\nabla u\|_{L^2}^3+ C\|F\|_{H^1}\|\nabla u\|_{L^2}
\big(\|\nabla u\|_{L^4}^2+ \|\nabla u\|_{L^2}^2\big),
\end{align}
which together with Lemma \ref{lem:2.7}, \eqref{3.35}, \eqref{3.37}, \eqref{3.7}, and \eqref{3.40} leads to
\begin{align}\label{J_1}
J_1 &\leq - (\lambda + 2\mu) \int \sigma^2 (\divv\dot{u})^2 \, dx - \left( \int_{\partial\Omega} \sigma^2 (u \cdot \nabla n \cdot u) F \, dS \right)_t \notag\\
&\quad + \frac{\delta}{2} \sigma^2 \|\nabla\dot{u}\|_{L^2}^2 + C \sigma^2 C_0^{\f14} \|\nabla H_t\|_{L^2}^2 + C \sigma^2\|\nabla u\|_{L^2}^4 \|\curl^2 H\|_{L^2}^2 \notag\\
&\quad + C \sigma^2 \|\nabla u\|_{L^4}^4 + C \sigma^2 \left( \|\sqrt{\rho} \dot{u}\|_{L^2}^2 + \|\curl^2 H\|_{L^2}^2 \right) \|\nabla u\|_{L^2}^2 + C \sigma^2 \|\nabla H\|_{L^2}^2 \|\nabla u\|_{L^2}^6 \notag\\
&\quad + C \sigma^2 \|\nabla u\|_{L^2}^2 \left( 1 + \|\nabla u\|_{L^2}^2 + \|\nabla u\|_{L^2}^4 + \|\nabla H\|_{L^2}^2 + \|\nabla H\|_{L^2}^4 + \|\nabla H\|_{L^2}^6 \right) \notag\\
&\quad + C \sigma\sigma'\big(\|\sqrt{\rho}\dot u\|_{L^2}^2+ \|\curl^2 H\|_{L^2}^2\big)\|\nabla u\|_{L^2}^2+ C\|\nabla u\|_{L^2}^2\big(1+ \|\nabla u\|_{L^2}^2+ \|\nabla H\|_{L^2}^2\big),
\end{align}
where the following estimates are employed
\begin{align*}
\|\nabla F\|_{L^3}^2\|\nabla u\|_{L^2}^2 &\leq C(\|\sqrt{\rho}\dot u\|_{L^3}^2+ \|\rho- \rho_s\|_{L^6}^2+ \|H\|_{L^6}^2\|\nabla H\|_{L^6}^2)\|\nabla u\|_{L^2}^2\notag\\
&\leq C\big(\|\sqrt{\rho}\dot u\|_{L^2}\|\sqrt{\rho}\dot u\|_{L^6}+ \|\rho- \rho_s\|_{L^6}^2+ \|\curl^2 H\|_{L^2}^2\|\nabla H\|_{L^2}^2+ \|\nabla H\|_{L^2}^4\big)\|\nabla u\|_{L^2}^2\notag\\
&\leq C\big(\|\sqrt{\rho}\dot u\|_{L^2}\|\nabla\dot u\|_{L^2}+ \|\sqrt{\rho}\dot u\|_{L^2}\|\nabla u\|_{L^2}^2+ \|\rho- \rho_s\|_{L^6}^2\notag\\
&\quad + \|\curl^2 H\|_{L^2}^2\|\nabla H\|_{L^2}^2+ \|\nabla H\|_{L^2}^4\big)\|\nabla u\|_{L^2}^2\notag\\
&\leq \frac{\delta}{12}\|\nabla\dot u\|_{L^2}^2+ C\big(\|\sqrt{\rho}\dot u\|_{L^2}^2+\|\curl^2 H\|_{L^2}^2\big)\|\nabla u\|_{L^2}^2\notag\\
&\quad + C\|\nabla u\|_{L^2}^2\big(1+ \|\nabla u\|_{L^2}^2+ \|\nabla u\|_{L^2}^4+ \|\nabla H\|_{L^2}^4\big),
\\
\|\nabla u\|_{L^2}^2\|F\|_{H^1}^2 &\leq C\|\nabla u\|_{L^2}^2\big(\|\sqrt{\rho}\dot u\|_{L^2}^2+ \|\rho-\rho_s\|_{L^3}^2+ \|\curl^2 H\|_{L^2}^2\|\nabla H\|_{L^2}^2+ \|\nabla H\|_{L^2}^4+ \|\nabla u\|_{L^2}^2\big)\notag\\
&\leq C\big(\|\sqrt{\rho}\dot u\|_{L^2}^2+ \|\curl^2 H\|_{L^2}^2\big)\|\nabla u\|_{L^2}^2+ C\|\nabla u\|_{L^2}^2\big(1+ \|\nabla u\|_{L^2}^2+ \|\nabla H\|_{L^2}^4\big),
\\
\|\nabla F\|_{L^6}\|\nabla u\|_{L^2}^3 &\leq C\Big(\|\nabla\dot u\|_{L^2}+ \|\nabla u\|_{L^2}^2+ \|\rho- \rho_s\|_{L^{\infty}}+ \|\nabla H\|_{L^2}^{\f12}\big(\|\curl^2 H\|_{L^2}+ \|\nabla H\|_{L^2}\big)^{\f32}\Big)\|\nabla u\|_{L^2}^3\notag\\
&\leq \frac{\delta}{12}\|\nabla\dot u\|_{L^2}^2+ C\|\nabla u\|_{L^2}^4\|\curl^2 H\|_{L^2}^2+ C\|\nabla H\|_{L^2}^2+ C\|\nabla H\|_{L^2}^2\|\nabla u\|_{L^2}^6\notag\\
&\quad + C\|\nabla u\|_{L^2}^2\big(1+ \|\nabla u\|_{L^2}^2+ \|\nabla u\|_{L^2}^4\big)+ C\|\nabla H\|_{L^2}^2,
\end{align*}
and
\begin{align*}
\|\nabla u\|_{L^2}^2\|\nabla H\|_{L^2}^2\|\nabla H\|_{L^6}^2 &\leq C\|\nabla u\|_{L^2}^2\|\nabla H\|_{L^2}^2\big(\|\curl^2 H\|_{L^2}^2+ \|\nabla H\|_{L^2}^2\big)\notag\\
&\leq C\|\curl^2 H\|_{L^2}^2\|\nabla u\|_{L^2}^2+ C\|\nabla u\|_{L^2}^2\|\nabla H\|_{L^2}^6+ C\|\nabla u\|_{L^2}^2\|\nabla H\|_{L^2}^2.
\end{align*}

Observing that \(\text{curl}\,u_t = \text{curl}\,\dot{u} - u \cdot \nabla\text{curl}\,u - \nabla u^i \times \nabla_i u\), we get that
\begin{align}\label{J_2}
J_2 &= - \int \sigma^2 |\text{curl}\,\dot{u}|^2 \, dx + \int \sigma^2 \text{curl}\,\dot{u} \cdot (\nabla u^i \times \nabla_i u) \, dx \notag\\
&\quad + \int \sigma^2 u \cdot \nabla\text{curl}\,u \cdot \text{curl}\,\dot{u} \, dx + \int \sigma^2 u \cdot \nabla\dot{u} \cdot (\nabla \times \text{curl}\,u) \, dx \notag\\
&\leq - \int \sigma^2 |\text{curl}\,\dot{u}|^2 \, dx + \frac{\delta}{12}\sigma^2\|\nabla\dot u\|_{L^2}^2+ C\sigma^2\|\nabla u\|_{L^4}^4.
\end{align}
A direct computation shows that
\begin{align}\label{J_3}
J_3 &= - \sigma^2 \int \nabla \dot{u} : (H \otimes H)_t \, dx - \mu \int \sigma^2 H \cdot \nabla H^j u \cdot \nabla \dot{u}^j \, dx \notag\\
&\leq C \sigma^2\big( \| \nabla \dot{u} \|_{L^2} \| H \|_{L^3} \| H_t \|_{L^6} + \| \nabla \dot{u} \|_{L^2} \| H \|_{L^6} \| \nabla H \|_{L^6} \| u \|_{L^6} \big) \notag\\
&\leq \frac{\delta}{12} \sigma^2 \| \nabla \dot{u} \|_{L^2}^2 + C \sigma^2 \big( \| \nabla H \|_{L^2}^4 + \| \nabla u \|_{L^2}^4 \big) \| \curl^2 H \|_{L^2}^2 \notag\\
&\quad + C \sigma^2 C_{0}^{\f12} \| \nabla H_t \|_{L^2}^2 + C \sigma^2 \| \nabla H \|_{L^2}^4 \| \nabla u \|_{L^2}^2,
\\ \label{J_4}
J_4&= \int \sigma^2\big(\rho u\cdot\nabla(\dot u\cdot\nabla\phi)-(\rho-\rho_s)(u\cdot\nabla\dot u)\cdot\nabla\phi\big)dx \notag\\
&\leq \sigma^2\big(\|u\|_{L^3}\|\nabla\dot u\|_{L^2}\|\nabla\phi\|_{L^6}+ \|u\|_{L^3}\|\dot u\|_{L^6}\|\nabla^2\phi\|_{L^2}\big)\notag\\
&\leq \frac{\delta}{12}\sigma^2\|\nabla\dot u\|_{L^2}^2+ C(\delta)\sigma^2\big(\|\nabla u\|_{L^2}^2 +\|\nabla u\|_{L^2}^4\big).
\end{align}

Consequently, substituting \eqref{J_1}--\eqref{J_4} into \eqref{3.34} gives that
\begin{align}\label{3.46}
&\Big(\f{\sigma^2}{2}\|\sqrt{\rho}\dot u\|_{L^2}^2\Big)_t +(2\mu+ \lambda)\sigma^2\|\divv\dot u\|_{L^2}^2+ \mu\sigma^2\|\curl\dot u\|_{L^2}^2 \notag\\
&\leq -\bigg(\int_{\partial\Omega}\sigma^2(u\cdot\nabla n\cdot u)F dS\bigg)_t+ \delta\sigma^2\|\nabla\dot u\|_{L^2}^2+ C\sigma^2C_0^{\f12}\|\nabla H_t\|_{L^2}^2+ C\sigma^2\|\nabla u\|_{L^4}^4\notag\\
&\quad + C\sigma^2\big(\|\nabla H\|_{L^2}^4+ \|\nabla u\|_{L^2}^4\big)\|\curl^2 H\|_{L^2}^2+ C\sigma^2\big(\|\sqrt{\rho}\dot u\|_{L^2}^2+ \|\curl^2 H\|_{L^2}^2\big)\|\nabla u\|_{L^2}^2\notag\\
&\quad + C\sigma^2\|\nabla H\|_{L^2}^2\|\nabla u\|_{L^2}^6+ C\sigma^2\|\nabla u\|_{L^2}^2\big(1+\|\nabla u\|_{L^2}^2+\|\nabla u\|_{L^2}^4+\|\nabla H\|_{L^2}^2 +\|\nabla H\|_{L^2}^4+C\|\nabla H\|_{L^2}^6\big)\notag\\
&\quad+ C\sigma\sigma'\big(\|\sqrt{\rho}\dot u\|_{L^2}^2+ \|\curl^2 H\|_{L^2}^2+ \|\nabla u\|_{L^2}^2 + \|\nabla u\|_{L^2}^4+\|\nabla H\|_{L^2}^2\|\nabla u\|_{L^2}^2\big).
\end{align}
By \eqref{3.1} and \eqref{3.4}, choosing $\delta$ small enough, and integrating \eqref{3.46} over $(0,T)$, we get that
\begin{align}\label{3.47}
&\sup_{0 \leq t \leq T}\big(\sigma^2\|\sqrt{\rho}\dot{u}\|_{L^2}^2\big)+ \int_0^T \sigma^2 \|\nabla \dot{u}\|_{L^2}^2 dt\notag \\
&\leq - \sup_{0 \leq t \leq T}\int_{\partial \Omega} \sigma^2 (u \cdot \nabla n \cdot u) F dS + C C_0^\f12 \int_0^T \sigma^2 \|\nabla H_t\|_{L^2}^2 dt + C \int_0^T \sigma^2 \|\nabla u\|_{L^4}^4 dt\notag\\
&\quad + C C_0 \sup_{0 \leq t \leq T} \big[\sigma^2 \big(\|\sqrt{\rho}\dot u\|_{L^2}^2+ \|\curl^2 H\|_{L^2}^2 \big)\big]+ C C_0 \sup_{0 \leq t \leq T}\big[ \sigma \left( \|\nabla u\|_{L^2}^2 \|\nabla H\|_{L^2}^2 + \|\nabla u\|_{L^2}^2 \right)\big] \notag\\
&\quad + C C_0 \sup_{0 \leq t \leq \sigma(T)}\big(\sigma \|\nabla u\|_{L^2}^2\big) + C \int_0^{\sigma(T)} \sigma \left( \|\sqrt{\rho}\dot{u}\|_{L^2}^2 + \|\curl^2 H\|_{L^2}^2 \right) \, dt +CC_0^{\f12} \notag\\
&\leq -\sup_{0 \leq t \leq T}\int_{\partial \Omega} \sigma^2(u\cdot\nabla n\cdot u)F dS+ CC_0^{\f12}\Big(1+\int _0^T \sigma^2 \|\nabla H_t\|_{L^2}^2 dt\Big)
+C A_{1}(T)+ C\int _0^T \sigma^2\|\nabla u\|_{L^4}^4 dt.
\end{align}
For the boundary term on the right-hand side of \eqref{3.47}, one infers from Lemma \ref{lem:2.6} that
\begin{align*}
\int_{\partial \Omega} (u \cdot \nabla n \cdot u) F dS
\leq C \|\nabla u\|_{L^2}^2 \|F\|_{H^1}
\leq \f14\|\sqrt{\rho}\dot u\|_{L^2}^2 + \f14\|\curl^2 H\|_{L^2}^2 + C \left( \|\nabla u\|_{L^2}^2 + \|\nabla u\|_{L^2}^4 \right).
\end{align*}
Therefore, it follows that
\begin{align}\label{3.49}
&\sup_{0 \leq t \leq T}\big(\sigma^2\|\sqrt{\rho}\dot{u}\|_{L^2}^2\big)+ \int_0^T \sigma^2 \|\nabla \dot{u}\|_{L^2}^2 \, dt -CC_0^{\f12}\int_0^T \sigma^2\|\nabla H_t\|_{L^2}^2 dt\notag\\
&\leq \frac{\sigma^2}{4}\sup_{0 \leq t \leq T}\|\curl^2 H\|_{L^2}^2+ CC_0^{\f12}+ CC_0^{\f12}\int_0^T \sigma^2\|\nabla H_t\|_{L^2}^2 dt + CA_{1}(T)+ C\int_0^T\sigma^2\|\nabla u\|_{L^4}^4 dt
\end{align}
due to \eqref{3.4}.

Next, we need to estimate the term $\|\nabla H_t\|_{L^2}$. Noticing that
\begin{align*}
\begin{cases}
H_{tt} + \nu \nabla \times (\curl H_t) = (H \cdot \nabla u - u \cdot \nabla H - H \divv u)_t, \quad &\text{in } \Omega, \\
H_t \cdot n = 0, \quad \curl H_t \times n = 0, \quad &\text{on } \partial \Omega,
\end{cases}
\end{align*}
we obtain that
\begin{align}\label{3.51}
& \frac{1}{2} \frac{d}{dt}\sigma^2 \|H_t\|_{L^2}^2 + \sigma^2 \|\text{curl} H_t\|_{L^2}^2 -\sigma \sigma' \|H_t\|_{L^2}^2 \, dx \notag\\
&= \int \sigma^2 (H_t \cdot \nabla u - u \cdot \nabla H_t - H_t \divv u) \cdot H_t dx+ \int \sigma^2 (H \cdot \nabla \dot{u} - \dot{u} \cdot \nabla H - H \divv \dot{u}) \cdot H_t dx \notag\\
&\quad- \int \sigma^2 (H \cdot \nabla (u \cdot \nabla u) - (u \cdot \nabla u) \cdot \nabla H - H \divv (u \cdot \nabla u)) \cdot H_t dx \triangleq K_1 + K_2 + K_3 .
\end{align}
By Lemmas \ref{lem:2.1} and \ref{lem:2.6}, a direct calculation leads to
\begin{align}\label{K_1}
K_1 &\leq C\sigma^2 \big(\|H_t\|_{L^3}\|H_t\|_{L^6}\|\nabla u\|_{L^2}+ \|u\|_{L^6}\|H_t\|_{L^3}\|\nabla H_t\|_{L^2}\big)\notag\\
&\leq \f18\sigma^2\|\nabla H_t\|_{L^2}^2+ C\sigma^2\|\nabla u\|_{L^2}^4\|H_t\|_{L^2}^2.
\end{align}
Similarly, one has that
\begin{align}\label{K_2}
K_2 &\leq C \sigma^2 \|H\|_{L^3} \|H_t\|_{L^6} \|\nabla \dot{u}\|_{L^2} - \int_{\partial \Omega} \sigma^2 (\dot{u} \cdot n) (H \cdot H_t) \, dS + \int \sigma^2 \dot{u} \cdot \nabla H_t \cdot H \, dx \notag\\
&\leq \int_{\partial \Omega} \sigma^2 (u \cdot \nabla n \cdot u) (H \cdot H_t) \, dS +\f18\sigma^2\|\nabla H_t\|_{L^2}^2+ CC_{0}^{\f12} \sigma^2 \left( \|\nabla \dot{u}\|_{L^2}^2 + \|\nabla u\|_{L^2}^4  \right)
\end{align}
due to
\begin{align*}
\int \sigma^2 \dot u\cdot\nabla H\cdot H_t dx =\int_{\partial \Omega}\sigma^2(\dot u\cdot n)(H\cdot H_t) dS- \int \sigma^2\divv\dot u H\cdot H_t dx -\int \dot u\cdot\nabla H_t\cdot H dx.
\end{align*}
By the trace theorem and Lemma \ref{lem:2.6}, it follows that
\begin{align*}
\int_{\partial\Omega}\sigma^2(u\cdot\nabla n\cdot u)(H\cdot H_t) dS
&\leq C\sigma\big(\|u\|_{L^6}\|\nabla u\|_{L^2}\|H\|_{L^6}\|H_t\|_{L^6}+ \|u\|_{L^6}^2\|\nabla H\|_{L^2}\|H_t\|_{L^6}\notag\\
&\quad + \|u\|_{L^6}^2\|\nabla H_t\|_{L^2}\|H\|_{L^6}+ \|u\|_{L^4}^2\|H\|_{L^3}\|H_t\|_{L^6}\big)\notag\\
&\leq \f18\sigma^2\|\nabla H_t\|_{L^2}^2+ C\sigma^2\|\nabla u\|_{L^2}^4\|\nabla H\|_{L^2}^2.
\end{align*}
Using similar arguments, one has that
\begin{align}\label{K_3}
K_3 &= \int \sigma^2 H \cdot \nabla H_t \cdot (u \cdot \nabla u) dx + \int_{\partial \Omega} \sigma^2 H \cdot H_t (u \cdot \nabla u \cdot n) dS
- \int \sigma^2 u \cdot \nabla u \cdot \nabla H_t \cdot H dx \notag\\
&\leq \f14 \sigma^2 \|\nabla H_t\|_{L^2}^2 + C \sigma^2 \left( \|\nabla u\|_{L^2}^4 + \|\nabla H\|_{L^2}^4 \right) \big( \|\sqrt{\rho}\dot{u}\|_{L^2}^2 + \|\curl^2 H\|_{L^2}^2 \big)\notag \\
&\quad +C\sigma^2\|\nabla u\|_{L^2}^2\|\nabla H\|_{L^2}^2\big(\|\nabla u\|_{L^2}^2+ \|\nabla H\|_{L^2}^2+1\big).
\end{align}

Putting \eqref{K_1}--\eqref{K_3} into \eqref{3.51} and choosing $\delta$ suitably small, we have
\begin{align*}
&\frac{d}{dt}\left( \sigma^2 \|H_t\|_{L^2}^2 \right) + \sigma^2 \|\nabla H_t\|_{L^2}^2 - C C_{0}^{\f12} \sigma^2  \|\nabla \dot{u}\|_{L^2}^2 \notag\\
&\leq C \sigma^2 \left( \|\nabla u\|_{L^2}^4 + \|\nabla H\|_{L^2}^4 \right) \big( \|\sqrt{\rho}\dot{u}\|_{L^2}^2 + \|\curl^2 H\|_{L^2}^2 + \|H_t\|_{L^2}^2 \big)\notag \\
&\quad + C \sigma^2 \|\nabla u\|_{L^2}^2 \|\nabla H\|_{L^2}^2 \left( \|\nabla u\|_{L^2}^2 + \|\nabla H\|_{L^2}^2 + 1 \right) +C\sigma^2 \|\nabla u\|_{L^2}^4+  C \sigma \sigma' \|H_t\|_{L^2}^2.
\end{align*}
Integrating the above inequality over $(0,T)$ indicates that
\begin{align}\label{3.59}
&\sup_{0 \leq t \leq T}\big(\sigma^2\|H_t\|_{L^2}^2\big)+ \int_0^T \sigma^2 \|\nabla H_t\|_{L^2}^2 \, dt - C_3 C_0^{\f12} \int_0^T \sigma^2 \|\nabla \dot{u}\|_{L^2}^2 \, dt \notag\\
&\leq C C_{0} \sup_{0 \leq t \leq T} \sigma^2 \left( \|\sqrt{\rho}\dot{u}\|_{L^2}^2 + \|\curl^2 H\|_{L^2}^2 + \|H_t\|_{L^2}^2 \right) \notag\\
&\quad + C C_0 \sup_{0 \leq t \leq T} \sigma \left( \|\nabla u\|_{L^2}^2 + \|\nabla H\|_{L^2}^2 \right) + C \int_0^{\sigma(T)}  \sigma\sigma' \|H_t\|_{L^2}^2 dt.
\end{align}
This together with \eqref{3.49} implies that
\begin{align}\label{3.60}
& \sup_{0 \leq t \leq T}\big[\sigma^2 \big(\|\sqrt{\rho}\dot{u}\|_{L^2}^2 + \|H_t\|_{L^2}^2 \big)\big]+ \int_0^T \sigma^2 \left( \|\nabla \dot{u}\|_{L^2}^2 + \|\nabla H_t\|_{L^2}^2 \right) \, dt \notag\\
& \quad- C_2 C_0^{\f12} \int_0^T \sigma^2 \|\nabla H_t\|_{L^2}^2 \, dt - C_3 C_0^{\f12} \int_0^T \sigma^2\|\nabla \dot{u}\|_{L^2}^2 \, dt \notag\\
&\leq C\int_0^T \sigma^2\|\nabla H_t\|_{L^2}^2 dt+ CA_{1}(T)+ C\int_0^{\sigma(T)}\sigma\sigma'\|H_t\|_{L^2}^2 dt \notag\\
& \leq C\int_0^T \sigma^2\|\nabla u\|_{L^4}^4 dt+ CA_{1}(T)+ CC_0^{\f12}
\end{align}
provided that
\begin{align}
C_0 \leq \varepsilon_{2}\triangleq \min \left\{\varepsilon_1,\Big(\frac{1}{4C_2}\Big)^2,\Big(\frac{1}{4C_3}\Big)^2\right\}.\notag
\end{align}

Finally, by Lemma \ref{lem:2.1} and $\eqref{a1}_3$, we have
\begin{align*}
\|\curl^2 H\|_{L^2}^2 &\leq C\big(\|H_t\|_{L^2}^2+ \|u\nabla H\|_{L^2}^2+ \|H\nabla u\|_{L^2}^2\big)\notag\\
&\leq C\big(\|H_t\|_{L^2}^2+ \|u\|_{L^6}^2\|\nabla H\|_{L^3}^2+ \|H\|_{L^\infty}^2\|\nabla u\|_{L^2}^2\big)\notag\\
&\leq C\big(\|H_t\|_{L^2}^2+ \|\nabla u\|_{L^2}^2\big(\|\nabla ^2H\|_{L^2}\|\nabla H\|_{L^2}+ \|\nabla H\|_{L^2}^2\big)\notag\\
&\quad + \|\nabla H\|_{L^2}\big(\|\curl^2 H\|_{L^2}+ \|\nabla H\|_{L^2})\|\nabla u\|_{L^2}^2\big)\notag\\
&\leq \f12\|\curl^2 H\|_{L^2}^2 +C\|H_t\|_{L^2}^2+ C\|\nabla H\|_{L^2}^2\|\nabla u\|_{L^2}^4+ C\|\nabla u\|_{L^2}^2\|\nabla H\|_{L^2}^2 .
\end{align*}
Thus, we have
\begin{align}\label{3.62}
\sup_{0 \leq t \leq T}\big(\sigma^2\|\curl^2 H\|_{L^2}^2\big)
\leq C\sup_{0 \leq t \leq T}\big[\sigma^2\big(\|H_t\|_{L^2}^2+ \|\nabla H\|_{L^2}^2\|\nabla u\|_{L^2}^4+ \|\nabla H\|_{L^2}^2\|\nabla u\|_{L^2}^2\big)\big].
\end{align}
This together with \eqref{3.60} gives that
\begin{align*}
A_2(T) &\leq CC_0+ CA_1(T)+ C\sup_{0 \leq t \leq T}\big((\sigma\|\nabla u\|_{L^2}^2)^2\|\nabla H\|_{L^2}^2\big)\notag\\
&\quad + C\sup_{0 \leq t \leq T}\big(\sigma\|\nabla u\|_{L^2}^2\sigma\|\nabla H\|_{L^2}^2\big)+ C\int_0^T \sigma^2\|\nabla u\|_{L^4}^4 dt,
\end{align*}
which implies \eqref{3.31}.
\end{proof}

\begin{lemma}\label{lem:3.4}
Let the condition \eqref{3.4} be satisfied and $F$ and $\omega$ be the ones defined in \eqref{f5}, then there exists a positive constant $\varepsilon_{3}$ depending only on $\mu$, $\lambda$, $\nu$, $\gamma$, $A$, $\underline\rho$, $\bar\rho$, $\hat{\rho}$, and $\Omega$ such that
\begin{gather}\label{3.64}
\int_{0}^{T} \|\rho - \rho_s\|_{L^2}^2 \, dt \leq C C_0,\\ \label{3.68}
\int_{0}^{T} \sigma^{2}\left(\|\nabla u\|_{L^{4}}^{4} + \|F\|_{L^{4}}^{4} + \|\omega\|_{L^{4}}^{4} + \|P - P(\rho_s)\|_{L^{4}}^{4}\right) d t \leq C C_{0},
\end{gather}
provided $C_0\leq\varepsilon_{3}$.
\end{lemma}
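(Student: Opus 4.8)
The plan is to treat the two estimates in turn: \eqref{3.64} is the genuinely new ingredient, obtained by a Bogovskii-operator test function in the spirit of \cite{HLX}, and \eqref{3.68} then follows by combining \eqref{3.64} with the elliptic bounds of Lemma~\ref{lem:2.6} and the weighted bounds encoded in $A_1(T)$, $A_2(T)$.

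For \eqref{3.64} I would test $\eqref{a1}_2$ — written in the form \eqref{lo} — against $w \triangleq \rho_s^{-1}\mathcal{B}[\rho-\rho_s]$; this is admissible because $\int(\rho-\rho_s)\,dx=0$ by \eqref{state density} (so $\divv\mathcal{B}[\rho-\rho_s]=\rho-\rho_s$) and $w|_{\partial\Omega}=0$ by Lemma~\ref{lem:2.2}. Using the identity $\nabla P-\rho\nabla\phi=\nabla(P-P(\rho_s))-\rho_s^{-1}(\rho-\rho_s)\nabla P(\rho_s)$, which relies on $\nabla P(\rho_s)=\rho_s\nabla\phi$ from \eqref{state density}, integration by parts isolates the principal term $\int(P(\rho)-P(\rho_s))\rho_s^{-1}(\rho-\rho_s)\,dx\ge c\|\rho-\rho_s\|_{L^2}^2$, the lower bound holding because $\rho\mapsto\frac{P(\rho)-P(\rho_s)}{\rho-\rho_s}$ is continuous and strictly positive on the compact set $\{0\le\rho\le2\hat\rho,\ \underline\rho\le\rho_s\le\bar\rho\}$. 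The Lamé terms, integrated by parts, produce no boundary contribution (since $w|_{\partial\Omega}=0$ and $\curl u\times n|_{\partial\Omega}=0$) and are bounded by $\delta\|\rho-\rho_s\|_{L^2}^2+C\|\nabla u\|_{L^2}^2$ via $\|\nabla w\|_{L^2}\le C\|\rho-\rho_s\|_{L^2}$; the Lorentz terms, rewritten using $\divv H=0$, give $\delta\|\rho-\rho_s\|_{L^2}^2+C\|H\|_{L^4}^4$; and the inertial term $\int\rho\dot u\cdot w$ is handled by moving $\partial_t$ outside, noting $\partial_t\mathcal{B}[\rho-\rho_s]=-\mathcal{B}[\divv(\rho u)]$ and hence $\|\partial_t w\|_{L^2}\le C\|\rho u\|_{L^2}$ by the second bound of Lemma~\ref{lem:2.2} (here $\rho u\cdot n|_{\partial\Omega}=0$), which yields a total time-derivative plus terms controlled by $\delta\|\rho-\rho_s\|_{L^2}^2+C(\|\nabla u\|_{L^2}^2+\|\nabla u\|_{L^2}^4)$.

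The delicate point is the external-force remainder $\int\big[(P(\rho)-P(\rho_s))-(\rho-\rho_s)P'(\rho_s)\big]\rho_s^{-2}\nabla\rho_s\cdot\mathcal{B}[\rho-\rho_s]\,dx$, which appears once one subtracts the first-order Taylor term of $P$ at $\rho_s$. By Taylor's theorem the bracket is $O((\rho-\rho_s)^2)$ uniformly on the above compact set, so this term is at most $C\|\rho-\rho_s\|_{L^3}^2\|\nabla\rho_s\|_{L^6}\|\mathcal{B}[\rho-\rho_s]\|_{L^6}\le C\|\rho-\rho_s\|_{L^2}^{7/3}\le CC_0^{1/6}\|\rho-\rho_s\|_{L^2}^2$, using $\|\rho-\rho_s\|_{L^3}^2\le C\|\rho-\rho_s\|_{L^2}^{4/3}$, $\|\mathcal{B}[\rho-\rho_s]\|_{L^6}\le C\|\rho-\rho_s\|_{L^2}$, and $\sup_t\|\rho-\rho_s\|_{L^2}^2\le CC_0$ from Lemma~\ref{lem:3.1}; this extra power of $C_0$ is precisely what permits absorption into the principal term with no smallness of $\phi$. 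Integrating in $t$, bounding the endpoint-in-time terms $\big[\int\rho u\cdot w\big]_0^T$ by the energy estimate, and using $\int_0^T(\|\nabla u\|_{L^2}^2+\|\nabla u\|_{L^2}^4+\|H\|_{L^4}^4)\,dt\le CC_0$ (Lemma~\ref{lem:3.1}, \eqref{3.29}, \eqref{3.11}), then choosing $\delta$ and $C_0$ small gives \eqref{3.64}.

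For \eqref{3.68}, the pressure piece is immediate: since the density is bounded, $\|P-P(\rho_s)\|_{L^4}^4\le C\|\rho-\rho_s\|_{L^\infty}^2\|\rho-\rho_s\|_{L^2}^2\le C\|\rho-\rho_s\|_{L^2}^2$, so $\int_0^T\sigma^2\|P-P(\rho_s)\|_{L^4}^4\,dt\le CC_0$ by \eqref{3.64}. For $\|F\|_{L^4}$, $\|\omega\|_{L^4}$, and then $\|\nabla u\|_{L^4}$ (the last controlled by the first two together with $\|P-P(\rho_s)\|_{L^4}$ and $\|H\|_{L^8}^2$ through Lemma~\ref{lem:2.4}), I would use the Gagliardo–Nirenberg estimate $\|F\|_{L^4}^4\le C\|F\|_{L^2}\|\nabla F\|_{L^2}^3+C\|F\|_{L^2}^4$ and its analogue for $\omega$: the zeroth-order pieces use $\|F\|_{L^2}\le C(\|\nabla u\|_{L^2}+\|\rho-\rho_s\|_{L^2}+\|H\|_{L^4}^2)$ directly from \eqref{f5} (bounded, with time-integral of the square $\le CC_0$), while the gradient pieces use \eqref{2.12} and \eqref{114} to write $\|\nabla F\|_{L^2},\|\nabla\omega\|_{L^2}\le C(\|\rho\dot u\|_{L^2}+\|\rho-\rho_s\|_{L^3}+\|H\cdot\nabla H\|_{L^2}+\|\nabla u\|_{L^2})$. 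One then distributes the $\sigma^2$-weight and invokes $\sup_t\sigma^2\|\sqrt\rho\dot u\|_{L^2}^2\le A_2(T)$, $\int_0^T\sigma\|\sqrt\rho\dot u\|_{L^2}^2\,dt\le A_1(T)$, $\int_0^T\|\nabla u\|_{L^2}^2\,dt\le CC_0$, $\int_0^T\|\rho-\rho_s\|_{L^3}^3\,dt\le CC_0$ (again from density boundedness and \eqref{3.64}), the $H$-bounds from Lemma~\ref{lem:2} together with $\int_0^T\|\nabla H\|_{L^2}^2\,dt\le CC_0$, and finally $\|H\|_{L^8}^8\le C\|\nabla H\|_{L^2}^6\|H\|_{L^\infty}^2$ handled the same way, keeping $\int_0^T\|\nabla H\|_{L^2}^2\,dt\le CC_0$ and using that $\sigma^2\|H\|_{L^\infty}^2$ is uniformly bounded via Lemma~\ref{lem:2.6} and the $A$-bounds; summing up gives \eqref{3.68}.

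The main obstacle is the external-force remainder in the proof of \eqref{3.64}: because $\phi$ is not assumed small, the naive estimate of that term is $\sim\|\nabla\phi\|\,\|\rho-\rho_s\|_{L^2}^2$ with a possibly large constant and cannot be absorbed, and the crux is realizing that subtracting the first-order Taylor polynomial of $P$ at $\rho_s$ — which is legitimate exactly because of $\nabla P(\rho_s)=\rho_s\nabla\phi$ — leaves a remainder quadratic in $\rho-\rho_s$ carrying an additional factor that is small by the basic energy estimate. By contrast, the boundary integrals in this lemma require no special effort, since $w$ vanishes on $\partial\Omega$, so the argument is cleaner here than the interior-derivative manipulations used elsewhere in Section~\ref{sec3}.
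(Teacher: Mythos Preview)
Your proposal is correct and follows essentially the same approach as the paper: for \eqref{3.64} the paper also tests the momentum equation against $\rho_s^{-1}\mathcal{B}[\rho-\rho_s]$ (after rewriting the pressure/force combination via the identity $\rho_s^{-1}\big(\nabla(P-P(\rho_s))-(\rho-\rho_s)\nabla\phi\big)=\nabla\big(\rho_s^{-1}(P-P(\rho_s))\big)-\frac{\gamma-1}{A}G(\rho,\rho_s)\nabla\rho_s^{-1}$, which is exactly your Taylor-remainder observation in different notation) and absorbs the quadratic remainder with the same $C_0^{1/6}$ factor; for \eqref{3.68} the paper likewise uses Gagliardo--Nirenberg on $F$ and $\omega$, the elliptic estimates of Lemma~\ref{lem:2.6}, the bounded-density reduction $\|P-P(\rho_s)\|_{L^4}^4\le C\|\rho-\rho_s\|_{L^2}^2$, and the $A_1,A_2$ weighted bounds.
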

\begin{proof}
Note that $\nabla\phi=\gamma\rho_s^{\gamma- 2}\nabla \rho_s$ due to \eqref{state density},
and the definition of $G(\rho, \rho_s)$ yields that
\begin{align}
\rho_s^{-1}(\nabla(\rho^{\gamma}- \rho_s^{\gamma})- \gamma(\rho- \rho_s)\rho_s^{\gamma- 2}\nabla \rho_s)= \nabla(\rho_s^{-1}(\rho^{\gamma}- \rho_s^{\gamma}))- \frac{\gamma- 1}{A}G(\rho, \rho_s)\nabla\rho_s^{-1}.\notag
\end{align}
Thus, we can rewrite $\eqref{a1}_2$ as
\begin{align}\label{3.65}
- \nabla \left( \rho_s^{-1} \left( P - P(\rho_s) \right) \right)
&= \rho_s^{-1} \bigg( \rho \dot{u} - (\mu+ \lambda) \nabla \divv u - \mu \Delta u -H\cdot\nabla H +\f12\nabla|H|^2\bigg) \notag\\ & \quad - \frac{\gamma - 1}{A} G(\rho, \rho_s)\nabla \rho_s^{-1}.
\end{align}
Multiplying \eqref{3.65} by \(\mathcal{B}[\rho - \rho_s]\) and integrating over \(\Omega\), one gets from Lemma \ref{lem:2.2} that
\begin{align}\label{3.66}
& \int \rho_s^{-1} \left( P - P(\rho_s) \right) (\rho - \rho_s) dx \notag \\
& = \left( \int \rho_s^{-1} \rho u \cdot \mathcal{B}[\rho - \rho_s] dx \right)_t - \int \rho u \cdot \mathcal{B}[\rho_t] dx  - \int \rho_s^{-1} \rho u^i u^j \partial_j B_i(\rho - \rho_s) dx \notag \\
&\quad - \int \rho u^i u^j (\partial_i \rho_s^{-1} B_j(\rho - \rho_s)) dx + \mu \int \rho_s^{-1} \nabla u : \nabla \mathcal{B}(\rho - \rho_s) dx + \mu \int \partial_i u^j \partial_i \rho_s^{-1} B_j(\rho - \rho_s) dx\notag \\
&\quad - \frac{\gamma - 1}{A} \int G(\rho, \rho_s) \nabla\rho_s^{-1}\cdot\mathcal{B}[\rho-\rho_s] dx+ (\lambda + \mu) \int \left( \rho_s^{-1} (\rho - \rho_s) + \nabla \rho_s^{-1} \cdot \mathcal{B}(\rho - \rho_s) \right) \divv u dx  \notag \\
&\quad + \int H\cdot\nabla\rho_s^{-1}\cdot\mathcal{B}[\rho-\rho_s] dx+ \int \rho_s^{-1}H\cdot\nabla \mathcal{B}[\rho-\rho_s]\cdot H dx
\notag \\ & \quad- \f12\int |H|^{2}\mathcal{B}[\rho-\rho_s]\cdot\nabla\rho_s^{-1}  dx- \f12\int\rho_s^{-1}|H|^2\divv(\mathcal{B}[\rho-\rho_s]) dx \notag \\
& \leq \left( \int \rho_s^{-1} \rho u \cdot \mathcal{B}[\rho - \rho_s] \, dx \right)_t + C \|\nabla u\|_{L^2}^2 + C \|u\|_{L^6}^2 \|\nabla \rho_s^{-1}\|_{L^6} \|\mathcal{B}[\rho - \rho_s]\|_{L^2} \notag \\
& \quad + C \|\nabla u\|_{L^2} \|\mathcal{B}[\rho - \rho_s]\|_{L^2}+ C \|\nabla u\|_{L^2} \|\nabla \rho_s^{-1}\|_{L^6} \|\mathcal{B}[\rho - \rho_s]\|_{L^3} + C \|\nabla u\|_{L^2} \|\rho - \rho_s\|_{L^2} \notag \\
&\quad + C \|G(\rho, \rho_s)\|_{L^3}^{\f12} \|G(\rho, \rho_s)\|_{L^1}^{\f12} \|\nabla \rho_s^{-1}\|_{L^6} \|\mathcal{B}[\rho - \rho_s]\|_{L^6}+ C \|H\|_{L^6}^2\|\nabla\rho_s^{-1}\|_{L^6}\|\mathcal{B}[\rho-\rho_s]\|_{L^2} \notag \\
&\quad + C\|H\|_{L^4}^2\|\nabla\mathcal{B}[\rho-\rho_s]\|_{L^2}+ C\|H\|_{L^6}^2\|\nabla\rho_s^{-1}\|_{L^6}\|\mathcal{B}[\rho-\rho_s]\|_{L^2}+ C\|H\|_{L^4}^2\|\nabla\mathcal{B}[\rho-\rho_s]\|_{L^2} \notag \\
&\leq \left( \int \rho_s^{-1} \rho u \cdot \mathcal{B}[\rho - \rho_s] \, dx \right)_t + \delta\|\rho-\rho_s\|_{L^2}^2 + C\|\nabla u\|_{L^2}^2 + C\|\nabla H\|_{L^2}^4 + C_4C_0^{\f16}\|\rho-\rho_s\|_{L^2}^2 .
\end{align}
Now choosing \(\delta = \frac{1}{4}\) and using Lemma \ref{lem:3.1}, we obtain \eqref{3.64} provided that
\begin{align*}
C_0 \leq \varepsilon_{3}\triangleq\min\bigg\{\varepsilon_{2},\Big(\frac{1}{4 C_4} \Big)^6\bigg\},
\end{align*}
where we have used
\begin{align*}
\bigg|\int_0^T \bigg(\int\rho_s^{-1}\rho u\cdot\mathcal{B}[\rho-\rho_s] dx\bigg)_t dt\bigg| \leq \bigg|\int \rho u\cdot\mathcal{B}[\rho-\rho_s] dx\bigg| +\|\sqrt{\rho_0}u_0\|_{L^2}^2\|\mathcal{B}[\rho_0-\rho_s]\|_{L^2} \leq CC_0.
\end{align*}

In view of the standard $L^{p}$-estimate, we have
\begin{align*}
\int_{0}^{T} \sigma^{2}\|\nabla u\|_{L^{4}}^{4} d t &\leq C \int_{0}^{T} \sigma^{2}\big(\|\divv u\|_{L^{4}}^{4} + \|\omega\|_{L^{4}}^{4}\big) d t \\
&\leq C \int_{0}^{T} \sigma^{2}\big(\|F\|_{L^{4}}^{4} + \|\omega\|_{L^{4}}^{4} + \|P - P(\rho_s)\|_{L^{4}}^{4} + \|H\|_{L^{8}}^{8}\big)dt.
\end{align*}
It follows from \eqref{f5}, \eqref{3.4}, Lemma \ref{lem:3.1}, \eqref{3.10}, and \eqref{3.11} that
\begin{equation*}
\|F\|_{L^{2}} + \|\omega\|_{L^{2}} \leq C\big(\|\nabla u\|_{L^{2}} + \|P - P(\rho_s)\|_{L^{2}} + \|H\|_{L^{4}}^{2}\big) \leq C.
\end{equation*}
This along with Lemma \ref{lem:2.1}, \eqref{3.4}, \eqref{3.7}, \eqref{3.11}, and \eqref{3.19} implies that
\begin{align}\label{3.71}
&\int_{0}^{T} \sigma^{2}\big(\|F\|_{L^{4}}^{4} + \|\omega\|_{L^{4}}^{4}\big)dt\notag \\
&\leq C \int_{0}^{T} \big[\sigma^{2}\left(\|F\|_{L^{2}} + \|\omega\|_{L^{2}}\right)\left(\|\nabla F\|_{L^{2}}^{3} + \|\nabla \omega\|_{L^{2}}^{3}\right)+ \sigma^2\big(\|F\|_{L^2}^4 +\|\omega\|_{L^2}^4\big)\big] dt
\notag \\
&\leq C\sup _{0 \leq t \leq T}\big(\sigma\|\sqrt{\rho}\dot{u}\|_{L^{2}}\big) \int_{0}^{T} \sigma\|\sqrt{\rho}\dot{u}\|_{L^{2}}^{2} d t + C\sup _{0 \leq t \leq T} \big(\sigma\|\nabla^{2} H\|_{L^{2}}\big)^{\f32} \int_{0}^{T}\|\nabla H\|_{L^{2}}^{2} d t\notag \\
&\quad + C\int_0^T \|\rho-\rho_s\|_{L^2}^2 dt+ \sup_{0 \leq t \leq T}\big(\sigma^2\|\nabla u\|_{L^2}^2\big)\int_0^T \|\nabla u\|_{L^2}^2 dt + \sup_{0 \leq t \leq T}\big(\sigma^2\|\nabla H\|_{L^2}^2\big)\int_0^T\|\nabla H\|_{L^2}^2 dt \notag \\
&\leq C A_{2}^{\f12}(T) A_{1}(T) + C C_{0} \leq C C_{0}^{\f34},
\end{align}
where we have used
\begin{align*}
\|F\|_{L^2}^4+ \|\omega\|_{L^2}^4 \leq C\big(\|F\|_{L^2}^3+ \|\omega\|_{L^2}^3\big).
\end{align*}
By Lemma \ref{lem:3.4}, \eqref{3.4}, Lemma \ref{lem:3.1}, \eqref{3.71}, and Young's inequality, we get that
\begin{align}\label{3.73}
\int_0^T \sigma^2\|P-P(\rho_s)\|_{L^4}^4 dt \leq C\int_0^T \sigma^2\|\rho-\rho_s\|_{L^2}^2 dt \leq CC_0.
\end{align}
Moreover, one has that
\begin{align*}
\int_{0}^{T}\sigma^{2}\|H\|_{L^8}^8dt
&\leq C \int_{0}^{T} \sigma^{2} \|H\|_{L^{\infty}}^4 \|H\|_{L^4}^4 dt \notag \\
&\leq C \int_{0}^{T} \sigma^{2}\Big(\|H\|_{L^2} \|\nabla H\|_{L^2}^5 \|\curl^2 H\|_{L^2}^2 +\|H\|_{L^2}\|\nabla H\|_{L^2}^7\Big)dt\notag\\
&\leq C C_0^{\f12} \int_{0}^{T} \sigma^{2} \|\curl^2 H\|_{L^2}^2 dt+CC_0 \leq C C_0^{\f34},
\end{align*}
which combined with \eqref{3.71} and \eqref{3.73} leads to \eqref{3.68}.
\end{proof}

\begin{lemma}
Let the condition \eqref{3.4} be satisfied, then there exists a positive constant $\varepsilon_{4}$ depending on $\mu$, $\lambda$, $\nu$, $\gamma$, $A$, $\hat{\rho}$, $\|\phi\|_{H^2},\inf\limits_{\Omega}\phi$, $\Omega$, $M_1$, and $M_2$ such that
\begin{gather}\label{3.75}
A_1(T) + A_2(T) \leq C_0^{\f12},\\ \label{3.76}
\sup_{0 \leq t \leq T}\big(\|\nabla u\|_{L^{2}}^{2}+\|\nabla H\|_{L^{2}}^{2}\big)+\int_{0}^{T}\big(\|\sqrt{\rho} \dot{u}\|_{L^{2}}^{2}+\left\|H_{t}\right\|_{L^{2}}^{2}+\left\|\nabla^{2} H\right\|_{L^{2}}^{2}\big) d t \leq C,\\ \label{3.77}
\sup_{0 \leq t \leq T}\big[\sigma\big(\|\sqrt{\rho} \dot{u}\|_{L^{2}}^{2}+\left\|H_{t}\right\|_{L^{2}}^{2}+\left\|\nabla^{2} H\right\|_{L^{2}}^{2}\big)\big]+\int_{0}^{T}\sigma\big(\|\nabla \dot{u}\|_{L^{2}}^{2}+\left\|\nabla H_{t}\right\|_{L^{2}}^{2}\big)dt\leq C,
\end{gather}
provided \(C_{0} \leq \varepsilon_{4}\).
\end{lemma}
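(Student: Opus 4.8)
The plan is to establish the three displays in order, each resting on the conditional estimates already proved in Lemmas \ref{lem:2}--\ref{lem:3.4}. For \eqref{3.75}: inequality \eqref{3.68} supplies $\int_0^T\sigma^2\big(\|P-P(\rho_s)\|_{L^4}^4+\|\nabla u\|_{L^4}^4\big)\,dt\le CC_0$, so feeding the first bound into \eqref{3.30} gives $A_1(T)\le CC_0$, and then \eqref{3.31} gives $A_2(T)\le C\big(C_0+A_1(T)\big)\le CC_0$. Hence $A_1(T)+A_2(T)\le CC_0\le C\sqrt{\varepsilon_4}\,\sqrt{C_0}$, and it suffices to demand $\varepsilon_4\le\min\{\varepsilon_3,C^{-2}\}$, which simultaneously ensures that all of Lemmas \ref{lem:2}--\ref{lem:3.4} are in force.

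For the uniform bound \eqref{3.76} I would split $[0,T]$ at $\sigma(T)=\min\{1,T\}$. On $[\sigma(T),T]$ one has $\sigma\equiv1$, so $\sup\|\nabla u\|_{L^2}^2$ and $\int\|\sqrt\rho\dot u\|_{L^2}^2\,dt$ over that range are bounded by $A_1(T)\le\sqrt{C_0}$; on $[0,\sigma(T)]$ they are bounded by $A_3(\sigma(T))\le 2K$ and by $\int_0^{\sigma(T)}\|\sqrt\rho\dot u\|_{L^2}^2\,dt\le 2K$ from \eqref{3.10}. The magnetic quantities $\sup\|\nabla H\|_{L^2}^2$ and $\int_0^T\big(\|H_t\|_{L^2}^2+\|\curl^2H\|_{L^2}^2\big)\,dt$ are already controlled globally by \eqref{3.11}, while $\|\nabla^2H\|_{L^2}\le C\big(\|\curl^2H\|_{L^2}+\|\nabla H\|_{L^2}\big)$ by Lemmas \ref{lem:2.4}--\ref{lm25} together with $\divv H=0$, so $\int_0^T\|\nabla^2H\|_{L^2}^2\,dt\le C$ because $\int_0^T\|\nabla H\|_{L^2}^2\,dt\le C\int_0^T\|\curl H\|_{L^2}^2\,dt\le CC_0$ by \eqref{112} and \eqref{3.7}. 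Collecting the pieces yields \eqref{3.76}.

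Estimate \eqref{3.77} is the $\sigma^1$-weighted analogue of the second-level bounds packed into $A_2(T)$, and near $t=0$ it requires a genuinely new computation since $\|\sqrt\rho\dot u\|_{L^2}$ need not be pointwise bounded there. I would repeat the derivations producing \eqref{3.46} and \eqref{3.51}, this time applying $\sigma\dot u^j[\partial_t+\divv(u\cdot)]$ to the $j$th equation of \eqref{1234} and multiplying the $H_t$-equation by $\sigma H_t$. Every term $J_i$ and $K_i$ is estimated exactly as before with $\sigma^2$ replaced by $\sigma$; the only new contributions are the lower-order terms $\frac12\sigma'\|\sqrt\rho\dot u\|_{L^2}^2$ and $\frac12\sigma'\|H_t\|_{L^2}^2$ on the right-hand sides, and the boundary term now reads $-\big(\int_{\partial\Omega}\sigma(u\cdot\nabla n\cdot u)F\,dS\big)_t+\sigma'\int_{\partial\Omega}(u\cdot\nabla n\cdot u)F\,dS$. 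Since $\sigma'$ is supported in $[0,\sigma(T)]$, all these extra pieces integrate to at most $C\int_0^{\sigma(T)}\big(\|\sqrt\rho\dot u\|_{L^2}^2+\|H_t\|_{L^2}^2+\|\nabla u\|_{L^2}^2+\|\nabla u\|_{L^2}^4\big)\,dt\le CK$ by \eqref{3.10} and \eqref{3.7}, hence are harmless. After choosing the absorbing parameter $\delta$ small and inserting \eqref{3.75}, the left-hand side of the desired estimate is controlled by $C\int_0^T\sigma\|\nabla u\|_{L^4}^4\,dt+C$, and the $\sigma$-weighted version of \eqref{3.62} then upgrades the bound on $\sup\sigma\|H_t\|_{L^2}^2$ to one on $\sup\sigma\|\curl^2H\|_{L^2}^2$, hence on $\sup\sigma\|\nabla^2H\|_{L^2}^2$.

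\textbf{The main obstacle} is the bound $\int_0^T\sigma\|\nabla u\|_{L^4}^4\,dt\le C$, since \eqref{3.68} only provides the $\sigma^2$-weighted version. On $[\sigma(T),T]$ one has $\int\sigma\|\nabla u\|_{L^4}^4=\int\sigma^2\|\nabla u\|_{L^4}^4\le CC_0$. On $[0,\sigma(T)]$, $\sigma\le1$ and $\|F\|_{L^2},\|\omega\|_{L^2},\|\nabla H\|_{L^2}\le C$ there by $A_3(\sigma(T))\le 2K$ and \eqref{3.7}, so Lemma \ref{lem:2.6}, the Gagliardo--Nirenberg inequality, and \eqref{3.19} give $\|\nabla u\|_{L^4}^4\le C\big(\|F\|_{L^4}^4+\|\omega\|_{L^4}^4+\|P-P(\rho_s)\|_{L^4}^4+\|H\|_{L^8}^8\big)\le C\big(\|\sqrt\rho\dot u\|_{L^2}^3+\|\curl^2H\|_{L^2}^2+1\big)$. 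The crucial gain is then
\begin{equation*}
\int_0^{\sigma(T)}\sigma\|\sqrt\rho\dot u\|_{L^2}^3\,dt\le\Big(\sup_{0\le t\le\sigma(T)}\sigma\|\sqrt\rho\dot u\|_{L^2}\Big)\int_0^{\sigma(T)}\|\sqrt\rho\dot u\|_{L^2}^2\,dt\le\sqrt{A_2(T)}\cdot 2K\le C
\end{equation*}
by \eqref{3.75} and \eqref{3.10}, together with $\int_0^{\sigma(T)}\sigma\|\curl^2H\|_{L^2}^2\,dt\le\int_0^{\sigma(T)}\|\curl^2H\|_{L^2}^2\,dt\le 2K$. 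This yields $\int_0^T\sigma\|\nabla u\|_{L^4}^4\,dt\le C$ and closes \eqref{3.77}. Finally one takes $\varepsilon_4$ to be the smallest of the smallness thresholds encountered above.
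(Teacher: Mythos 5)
Your proof is correct and follows essentially the same route as the paper: derive \eqref{3.75} by combining \eqref{3.30}, \eqref{3.31}, and \eqref{3.68}; read off \eqref{3.76} from \eqref{3.10}, \eqref{3.11}, \eqref{3.75}, and the div-curl bound $\|\nabla^2H\|_{L^2}\le C\big(\|\curl^2H\|_{L^2}+\|\nabla H\|_{L^2}\big)$; and redo the weighted energy identity of Lemma \ref{lem:3.2} with the weight $\sigma$ in place of $\sigma^2$ to get \eqref{3.77}. The one place where you genuinely diverge is the treatment of $\int_0^T\sigma\|\nabla u\|_{L^4}^4\,dt$: the paper estimates it uniformly in $t$ by writing $\sigma\|\sqrt\rho\dot u\|_{L^2}^3\le\big(\sigma\|\sqrt\rho\dot u\|_{L^2}^2\big)^{1/2}\|\sqrt\rho\dot u\|_{L^2}^2$, producing a term $C\,\big(\sup_t\sigma\|\sqrt\rho\dot u\|_{L^2}^2\big)^{1/2}$ that is then absorbed into the left-hand side by Young's inequality; you instead split at $\sigma(T)$, invoke \eqref{3.68} on $[\sigma(T),T]$ where $\sigma=\sigma^2$, and on $[0,\sigma(T)]$ dominate $\sup\sigma\|\sqrt\rho\dot u\|_{L^2}$ by $\sqrt{A_2(T)}\le C_0^{1/4}$ (already controlled by \eqref{3.75}), so no absorption is needed. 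Both closures are valid; yours trades the self-improving Young step for an explicit decomposition, which is slightly more transparent at the cost of invoking the freshly proved $A_2$ bound rather than the quantity one is trying to estimate.
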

\begin{proof}
It follows from \eqref{3.30}, \eqref{3.31}, and \eqref{3.68} that
\begin{equation*}
A_1(T) + A_2(T) \leq C C_0^{\f34}.
\end{equation*}
Thus, one immediately obtains \eqref{3.75} provided that
\begin{align*}
C_0 \leq \varepsilon_{4} \triangleq \min\big\{\varepsilon_{3},C^{-4}\big\}.
\end{align*}

By Lemma \ref{lem:2} and \eqref{3.75}, we can get \eqref{3.76} directly.

To prove \eqref{3.77}, operating $\sigma\dot u[\partial_t + \divv(u\cdot)]$ to both sides of $\eqref{a1}_2$, then we deduce from \eqref{3.7}, \eqref{3.75}, \eqref{3.76}, and \eqref{3.46} (with $\sigma^2$ replaced by $\sigma$) that
\begin{align}\label{qwe}
&\big(\sigma\|\sqrt{\rho}\dot u\|_{L^2}^2\big)_t + \sigma\|\nabla\dot u\|_{L^2}^2 \leq -\bigg(\int_{\partial\Omega}\sigma(u\cdot n\cdot u)F dS\bigg)_t +\frac{\sigma}{2}\|\nabla\dot u\|_{L^2}^2+ C_2\sigma C_0^{\f12}\|\nabla H_t\|_{L^2}^2 \notag\\
&\quad + C\sigma\|\nabla u\|_{L^4}^4+ C\sigma\big(\|\nabla H\|_{L^2}^4+ \|\nabla u\|_{L^2}^4\big)\|\curl^2 H\|_{L^2}^2+ C\sigma\big(\|\sqrt{\rho}\dot u\|_{L^2}^2+ \|\curl^2 H\|_{L^2}^2\big)\|\nabla u\|_{L^2}^2 \notag\\
&\quad + C\sigma\|\nabla H\|_{L^2}^2\|\nabla u\|_{L^2}^6+ C\sigma\|\nabla u\|^2\big(1+\|\nabla u\|_{L^2}^2+ \|\nabla u\|_{L^2}^4+ \|\nabla H\|_{L^2}^2 +\|\nabla H\|_{L^2}^4+ \|\nabla H\|_{L^2}^6\big)\notag\\
&\quad + C\sigma'\big(\|\sqrt{\rho}\dot u\|_{L^2}^2+ \|\curl^2 H\|_{L^2}^2+ \|\nabla u\|_{L^2}^2+ \|\nabla H\|_{L^2}^4+ \|\nabla u\|_{L^2}^4+ \|\nabla H\|_{L^2}^2\|\nabla u\|_{L^2}^2\big).
\end{align}
Integrating \eqref{qwe} over $(0,T)$ gives that
\begin{align}\label{3.79}
\sup_{0 \leq t \leq T}\big(\sigma\|\sqrt{\rho}\dot u\|_{L^2}^2\big)
+\int_{0}^{T}\sigma\|\nabla\dot u\|_{L^2}^2 dt \leq C+ C\int_{0}^{T}\sigma\|\nabla u\|_{L^4}^4 dt+ \f12\int_{0}^{T}\sigma\|\nabla\dot u\|_{L^2}^2 dt.
\end{align}
We need to estimate the last two terms on the right-hand side of \eqref{3.79}.
It follows from Lemmas \ref{lem:2.1}, \ref{lem:2.6}, \eqref{3.4}, \eqref{3.7}, \eqref{3.19}, \eqref{3.29}, \eqref{3.68}, \eqref{3.75}, and \eqref{3.76} that
\begin{align*}
C\int_{0}^{T}\sigma\|\nabla u\|_{L^4}^4 dx &\leq C\int_{0}^{T}\sigma\|\nabla u\|_{L^2}\|\nabla u\|_{L^6}^3 dt \notag\\
&\leq C\int_{0}^{T}\sigma\|\nabla u\|_{L^2}\Big(\|\sqrt{\rho}\dot u\|_{L^2}^3+ \|\rho-\rho_s\|_{L^6}^3+ \|H\|_{L^{12}}^6+ \|H\cdot\nabla H\|_{L^2}^3\Big)dt \notag\\
&\leq C\int_{0}^{T}\sigma\|\nabla u\|_{L^2}\Big(\|\sqrt{\rho}\dot u\|_{L^2}^3+ \|\rho-\rho_s\|_{L^2}^3+ \|\nabla H\|_{L^2}^6+ \|\nabla H\|_{L^2}^{\f92}\|\curl^2 H\|_{L^2}^{\f32}\Big)dt \notag\\
&\leq C\sup_{0 \leq t \leq T}\big(\sigma\|\sqrt{\rho}\dot u\|_{L^2}^2\big)^{\f12}\int_{0}^{T}\|\sqrt{\rho}\dot u\|_{L^2}^2 dt \notag\\
&\quad + C\int_{0}^{T}\Big(\|\nabla u\|_{L^2}^2+ \|\nabla u\|_{L^2}^4+ \|\curl^2 H\|_{L^2}^2+ \|\nabla H\|_{L^2}^6\|\nabla u\|_{L^2}\Big)dt \notag\\
&\leq C+ C\sup_{0 \leq t \leq T}\big(\sigma\|\sqrt{\rho}\dot u\|_{L^2}^2\big)^{\f12} \leq C+ \f12\sup_{0 \leq t \leq T }\big(\sigma\|\sqrt{\rho}\dot u\|_{L^2}^2\big),
\end{align*}
which together with \eqref{3.79} implies that
\begin{align}\label{3.81}
\sup_{0 \leq t \leq T}\big(\sigma\|\sqrt{\rho}\dot u\|_{L^2}^2\big)
+\int_{0}^{T}\sigma\big(\|\nabla\dot u\|_{L^2}^2+ \|\nabla u\|_{L^4}^4\big)dt \leq C.
\end{align}
With the help of \eqref{3.76} and \eqref{3.81}, similar to the derivation of \eqref{3.59}, one can obtain the estimates of magnetic field $H$ stated in \eqref{3.77}.
\end{proof}

Now we show the uniform upper bound of density.
\begin{lemma}\label{lem:3.8}
Suppose that the conditions of Proposition 3.1 hold. Then there exists a positive constant $\varepsilon_{5}$ depending on $\mu$, $\lambda$, $\nu$, $\gamma$, $A$, $\hat\rho$, $\inf\limits_{\Omega}\phi$, $\|\phi\|_{H^2}$, $\Omega$, $M_1$, and $M_2$ such that
\begin{align}\label{cz2}
\sup_{0 \leq t \leq T}\|\rho(t)\|_{L^\infty} \leq \frac{7\hat\rho}{4},
\end{align}
provided $C_0 \leq \varepsilon_{5}$.
\end{lemma}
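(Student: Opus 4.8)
The plan is to bound $\rho$ along particle trajectories by Zlotnik's inequality (Lemma~\ref{lem:2.8}). Let $X(t,x)$ be the flow of $u$, i.e. $\partial_t X(t,x)=u(X(t,x),t)$ and $X(0,x)=x$; since $u$ is smooth and $u\cdot n=0$ on $\partial\Omega$, the trajectory $X(\cdot,x)$ stays in $\overline{\Omega}$, so it suffices to bound $\rho(X(t,x),t)$ uniformly in $x$. From \eqref{f5} one has $\divv u=\frac{1}{2\mu+\lambda}\big(F+(P(\rho)-P(\rho_s))+\tfrac12|H|^2\big)$, so the mass equation $\eqref{a1}_1$ becomes, along $X(\cdot,x)$,
\begin{equation*}
\frac{d}{dt}\rho(X(t,x),t)=g(\rho)+b'(t),\qquad g(\rho):=-\frac{A}{2\mu+\lambda}\rho\big(\rho^{\gamma}-\bar\rho^{\gamma}\big),
\end{equation*}
where $b'(t):=-\frac{\rho}{2\mu+\lambda}\big(F+\tfrac12|H|^2+P(\bar\rho)-P(\rho_s)\big)$, evaluated at $(X(t,x),t)$. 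Because $0\le\rho_s\le\bar\rho$ gives $P(\bar\rho)-P(\rho_s)\ge0$, this last term is nonpositive, so by \eqref{3.4} we have $b'(t)\le\frac{2\hat\rho}{2\mu+\lambda}\big(\|F(t)\|_{L^\infty}+\tfrac12\|H(t)\|_{L^\infty}^2\big)$; and $g(\infty)=-\infty$. It therefore remains to control $\int\big(\|F\|_{L^\infty}+\|H\|_{L^\infty}^2\big)\,dt$ separately on $[0,\sigma(T)]$ and on $[\sigma(T),T]$, using the (weighted) a priori estimates of Section~\ref{sec3}.

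On $[0,\sigma(T)]$, where $\sigma(t)=t$, I would use Lemma~\ref{lem:2.1} in the forms $\|F\|_{L^\infty}\le C\|F\|_{L^2}^{1/4}\|\nabla F\|_{L^6}^{3/4}+C\|F\|_{L^2}$ and $\|H\|_{L^\infty}^2\le C\|H\|_{L^2}^{1/2}\|\nabla H\|_{L^6}^{3/2}+C\|H\|_{L^2}^2$, Lemma~\ref{lem:2.6} and \eqref{112} to convert $\|\nabla F\|_{L^6},\|\nabla H\|_{L^6}$ into norms of $\nabla\dot u$, $\curl^2H$, $\nabla u$, $\rho-\rho_s$, $H$, and then the bounds already established: $\sup_{0\le t\le T}\sigma\|\nabla u\|_{L^2}^2\le A_1(T)\le C_0^{1/2}$, $\sup_{0\le t\le T}\big(\|\rho-\rho_s\|_{L^2}^2+\|H\|_{L^2}^2\big)\le CC_0$ from \eqref{3.7}, $\sup_{0\le t\le T}\|\nabla H\|_{L^2}^2\le C$ from \eqref{3.11}, $\int_0^T\sigma\|\nabla\dot u\|_{L^2}^2\,dt+\sup_{0\le t\le T}\sigma\|\curl^2H\|_{L^2}^2\le C$ from \eqref{3.77}, together with $\|\rho-\rho_s\|_{L^\infty}\le2\hat\rho+\bar\rho$. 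In particular $\|F\|_{L^2}\le CC_0^{1/8}t^{-1/2}$ on $(0,\sigma(T)]$, and since $t^{-a}$ is integrable near $0$ for every $a<1$, Hölder in time shows that each contribution carries a positive power of $C_0$, so $\int_0^{\sigma(T)}\big(\|F\|_{L^\infty}+\|H\|_{L^\infty}^2\big)\,dt\le CC_0^{\theta}$ for some $\theta>0$. Then Lemma~\ref{lem:2.8} with $N_1=0$, $N_0=CC_0^{\theta}$, $\xi^*=\bar\rho$ (as $g(\xi)\le0$ for $\xi\ge\bar\rho$), using $\rho_0\le\hat\rho$ and $\bar\rho<\hat\rho$, gives $\sup_{[0,\sigma(T)]}\rho\le\hat\rho+CC_0^{\theta}\le\frac{3\hat\rho}{2}$ for $C_0$ small.

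On $[\sigma(T),T]$ (a single point unless $T>1$), $\sigma\equiv1$, so the weighted bounds become unweighted: from $A_1(T)+A_2(T)\le C_0^{1/2}$ (see \eqref{3.75}), $\sup_{[\sigma(T),T]}\big(\|\nabla u\|_{L^2}^2+\|\nabla H\|_{L^2}^2+\|\sqrt\rho\dot u\|_{L^2}^2+\|\curl^2H\|_{L^2}^2\big)\le C_0^{1/2}$ and $\int_{\sigma(T)}^T\big(\|\nabla\dot u\|_{L^2}^2+\|\curl^2H\|_{L^2}^2\big)\,dt\le C_0^{1/2}$, while $\sup_{0\le t\le T}\|\rho-\rho_s\|_{L^2}^2\le CC_0$. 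Here I would use $\|F\|_{L^\infty}\le C\|F\|_{L^4}^{1/4}\|\nabla F\|_{L^4}^{3/4}+C\|F\|_{L^2}$ with $\|\nabla F\|_{L^4}\le C\big(\|\nabla\dot u\|_{L^2}+\|\nabla u\|_{L^2}^2+\|\rho-\rho_s\|_{L^{12}}+\|H\cdot\nabla H\|_{L^4}\big)$ (Lemma~\ref{lem:2.6}), trading the bounded-but-not-small term $\|\rho-\rho_s\|_{L^\infty}$ against the small $L^2$-norm through $\|\rho-\rho_s\|_{L^{12}}\le C\|\rho-\rho_s\|_{L^\infty}^{5/6}\|\rho-\rho_s\|_{L^2}^{1/6}\le CC_0^{1/12}$, and similarly extracting powers of $C_0$ from $\|F\|_{L^2}$, $\|F\|_{L^4}$ and $\|H\cdot\nabla H\|_{L^4}$. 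By Hölder and Young's inequality in time on each $[t_1,t_2]\subset[\sigma(T),T]$ this yields $\int_{t_1}^{t_2}\big(\|F\|_{L^\infty}+\|H\|_{L^\infty}^2\big)\,ds\le CC_0^{\theta}+CC_0^{\theta}(t_2-t_1)$, so Lemma~\ref{lem:2.8} applies with $N_0=N_1=CC_0^{\theta}$; since $g$ is strictly decreasing on $[\bar\rho,\infty)$ with $g(\bar\rho)=0$, for $C_0$ small we may take $\xi^*\le\bar\rho+1\le\hat\rho$, and with initial value $\rho(X(\sigma(T),x),\sigma(T))\le\frac{3\hat\rho}{2}$ from the first step, $\sup_{[\sigma(T),T]}\rho\le\frac{3\hat\rho}{2}+CC_0^{\theta}\le\frac{7\hat\rho}{4}$ once $C_0\le\varepsilon_5$. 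Combining the two regimes gives \eqref{cz2}.

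The step I expect to be the main obstacle is the time-integral control of $\|F\|_{L^\infty}$ and $\|H\|_{L^\infty}^2$: near $t=0$ only $\sigma$-weighted bounds on $\|\nabla\dot u\|_{L^2}$ and $\|\curl^2H\|_{L^2}$ are available, so the interpolation exponents must be chosen so that the remaining power of $t$ stays $<1$ while a positive power of $C_0$ is still produced; and on $[\sigma(T),T]$ the sensitive point is that $\|\rho-\rho_s\|_{L^\infty}$ enters $\nabla F$ (only boundedly), and must be downgraded via the $L^{12}$ interpolation so that the Zlotnik constant $N_1$ becomes small enough to keep $\xi^*$ below $\hat\rho$.
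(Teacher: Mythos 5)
Your proof follows the same Zlotnik-based strategy as the paper: rewrite the continuity equation along particle paths as $D_t\rho = g(\rho) + b'(t)$ using the effective viscous flux $F$, split the time interval at $\sigma(T)$, bound $b(t_2)-b(t_1)$ by $\int(\|F\|_{L^\infty}+\|H\|_{L^\infty}^2)\,dt$ via the weighted a priori bounds of Section~\ref{sec3}, and conclude with Lemma~\ref{lem:2.8}. The conclusion, the choice of $\xi^*\le\hat\rho$ on $[0,\sigma(T)]$ and the final threshold $\frac{7\hat\rho}{4}$, all match the paper.

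There is one genuine (and welcome) technical difference. You define $g(\xi) = -\frac{A}{2\mu+\lambda}\xi(\xi^\gamma-\bar\rho^\gamma)$ with the \emph{constant} $\bar\rho$, absorbing the nonpositive remainder $-\frac{\rho}{2\mu+\lambda}\bigl(P(\bar\rho)-P(\rho_s)\bigr)$ into $b'$, whereas the paper takes $g(\rho)=-\frac{\rho(P(\rho)-P(\rho_s))}{2\mu+\lambda}$, which along a trajectory depends on $\rho_s(X(t,x))$ and is therefore not a function of $\rho$ alone — strictly speaking, Lemma~\ref{lem:2.8} requires $g\in C(\mathbb{R})$. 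Your decomposition removes this informality at no cost, since only an upper bound on $b(t_2)-b(t_1)$ is needed and the discarded term has the right sign. On $[\sigma(T),T]$ you aim for the stronger statement $N_1=CC_0^\theta$, which would let $\xi^*$ sit near $\bar\rho+1$; the paper instead Young-splits $\|F\|_{L^\infty}\le\epsilon + C_\epsilon\|F\|_{L^\infty}^{8/3}$ and accepts an $O(1)$ drift $N_1=\frac{A}{2\mu+\lambda}$, compensating with the larger $\xi^*=\frac{3\hat\rho}{2}$. Both choices land at $\frac{3\hat\rho}{2}+CC_0^\theta\le\frac{7\hat\rho}{4}$; your route requires somewhat more careful Hölder/Young bookkeeping on $[\sigma(T),T]$ (extracting $C_0$-powers from $\|F\|_{L^4}$ and $\|\rho-\rho_s\|_{L^{12}}$, then trading $(t_2-t_1)^{\alpha}$, $\alpha<1$, for $1+(t_2-t_1)$), but the smallness is available from \eqref{3.7}, \eqref{3.64}, \eqref{3.75}, so it does go through. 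The paper's version is slightly simpler to justify; yours is a legitimate variant.
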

\begin{proof}
Denote by
\begin{align*}
D_t\rho \triangleq \rho_t + u\cdot\nabla\rho ,\quad g(\rho)\triangleq -\frac{\rho(P- P(\rho_s))}{2\mu+ \lambda}, \quad b(t)\triangleq -\frac{1}{2\mu+ \lambda}\int_{0}^{t}\rho\bigg(\f12|H|^2+ F\bigg) dt,
\end{align*}
then $\eqref{a1}_1$ can be rewritten as
\begin{align*}
D_t\rho= g(\rho)+ b'(t).
\end{align*}
In order to apply Lemma \ref{lem:2.8}, we need to deal with $b(t)$. On the one hand, it follows from \eqref{3.7}, \eqref{3.11}, and \eqref{3.75} that, for $0 \leq t \leq \sigma(T)$,
\begin{align*}
|b(t_2)- b(t_1)| &\leq C\int_{0}^{\sigma(T)}\big(\|\rho F\|_{L^\infty}+ \|\rho H\|_{L^\infty}^2\big) dt \notag\\
&\leq C\int_{0}^{\sigma(T)}\Big(\|\nabla H\|_{L^2}\|\curl^2 H\|_{L^2}+ \|\nabla H\|_{L^2}^2+ \|F\|_{L^2}^{\f14}\|\nabla F\|_{L^6}^{\f34}+\|F\|_{L^2}\Big) dt\notag\\
&\leq CC_0^{\f14}+ CC_0^{\frac{1}{16}}\int_{0}^{\sigma(T)}\Big(1+\sigma^{-\f18}\Big)\Big(\|\rho\dot u\|_{L^6}^{\f34} +\|\rho- \rho_s\|_{L^\infty}^{\f34}+ \|H\cdot\nabla H\|_{L^6}^{\f34}\Big) dt\notag\\
&\leq CC_0^{\frac{1}{16}}+ CC_0^{\frac{1}{16}}\int_{0}^{\sigma(T)}\Big(1+\sigma^{-\f18}\Big)\Big(\|\nabla\dot u\|_{L^2}^{\f34}+ \|\curl^2 H\|_{L^2}^{\f98}\Big)dt\notag\\
&\leq CC_0^{\frac{1}{16}}+ CC_0^{\frac{1}{16}}\bigg(\int_{0}^{\sigma(T)}\Big(1+ \sigma^{-\f45}\Big) dt\bigg)^{\f58}\bigg(\int_{0}^{\sigma(T)}\sigma\|\nabla\dot u\|_{L^2}^2 dt\bigg)^{\f38}\notag\\
&\quad+ CC_0^{\frac{1}{16}}\bigg(\int_{0}^{\sigma(T)}\Big(1+ \sigma^{-\f27}\Big) dt\bigg)^{\frac{7}{16}}\bigg(\int_{0}^{\sigma(T)}\|\curl^2 H\|_{L^2}^2 dt\bigg)^{\frac{9}{16}}\notag\\
&\leq CC_0^{\frac{1}{16}},
\end{align*}
where we have used
\begin{align*}
\|F\|_{L^2} &\leq C\Big(\|\nabla u\|_{L^2}+ \|\rho- \rho_s\|_{L^2}+ \|H\|_{L^2}^{\f12}\|\nabla H\|_{L^2}^{\f32}\Big)\\
&\leq CC_0^{\f14}+ C\sigma^{-\f12}\big(\sigma\|\nabla u\|_{L^2}^2\big)^{\f12} \leq CC_0^{\f14}\left(1+ \sigma^{-\f12}\right).
\end{align*}
So, for \(t \in [0, \sigma(T)]\), we can choose $N_0 = C C_0^{\frac{1}{16}}, \ N_1 = 0$, and  $\xi^* = \hat{\rho}$ in \eqref{2.31}.
Noting that
\begin{align*}
g(\xi) = -\frac{A\xi}{2\mu + \lambda} \left(\xi^\gamma - \rho_{s}^\gamma\right) \leq -N_1 = 0 \quad \text{for all } \xi \geq \xi^* = \hat{\rho},
\end{align*}
we thus deduce from \eqref{2.31} that
\begin{equation}\label{cz}
\sup_{0 \leq t \leq \sigma(T)} \|\rho(t)\|_{L^\infty} \leq \hat{\rho} + C C_0^{\frac{1}{16}} \leq \frac{3}{2}\hat{\rho},
\end{equation}
provided that
\begin{align*}
C_0 \leq \varepsilon_{4,1} \triangleq \min\left\{\varepsilon_{4}, \Big(\frac{\hat{\rho}}{2C}\Big)^{16}\right\}.
\end{align*}
\par
On the other hand, for $t\in [\sigma(T), T]$, it follows from \eqref{2.12}, \eqref{2.26}, \eqref{3.4}, and \eqref{3.64} that
\begin{align}\label{3.88}
|b(t_2)- b(t_1)| &\leq C\int_{t_1}^{t_2}\big(\|\rho F\|_{L^\infty}+ \|\rho H\|_{L^\infty}^2\big) dt \notag\\
&\leq C\int_{t_1}^{t_2}\Big(\|H\|_{L^2}^{\f12}\|\nabla H\|_{L^6}^{\f32}+\|F\|_{L^\infty}\Big) dt \notag\\
&\leq \frac{A}{2\mu+ \lambda}(t_2- t_1)+C\int_{t_1}^{t_2}\Big(\|H\|_{L^2}^2+ \|\curl^2 H\|_{L^2}^2+ \|\nabla H\|_{L^2}^2 +\|F\|_{L^\infty}^\f83\Big) dt\notag\\
&\leq \frac{A}{2\mu+ \lambda}(t_2- t_1)+C\int_{t_1}^{t_2}\Big( \|\curl^2 H\|_{L^2}^2+ \|\nabla H\|_{L^2}^2 +\|F\|_{L^4}^\f23\|\nabla F\|_{L^4}^2+\|F\|_{L^2}^\f83\Big) dt\notag\\
&\leq \frac{A}{2\mu+ \lambda}(t_2- t_1)+ CC_0^{\frac{1}{16}}+ C\int_{t_1}^{t_2}\big(\sigma^2\|\nabla\dot u\|_{L^2}^2 + \sigma\|\curl^2 H\|_{L^2}^2\big)dt \notag\\
&\leq \frac{A}{2\mu+ \lambda}(t_2- t_1)+ CC_0^{\frac{1}{16}}. \
\end{align}
Now we choose \(N_{0}=C C_{0}^{\frac{1}{16}}\), \(N_{1}=\frac{A}{\lambda+2 \mu}\), and \(\xi^*=\frac{3 \hat{\rho}}{2}\). Noting that for all \(\xi \geq \xi^*=\frac{3 \hat{\rho}}{2}>\bar{\rho}+1\),
\begin{align*}
g(\xi)=-\frac{A \xi}{2 \mu+\lambda}\left(\xi^{\gamma}-\rho_{s}^{\gamma}\right) \leq-\frac{A}{\lambda+2 \mu}=-N_{1},
\end{align*}
we then infer from \eqref{3.88} and Lemma \ref{lem:2.8} that
\begin{align}\label{cz3}
\sup_{\sigma(T) \leq t \leq T}\|\rho(t)\|_{L^{\infty}} \leq \frac{3}{2}\hat\rho+ CC_{0}^{\frac{1}{16}} \leq \frac{7}{4}\hat\rho,
\end{align}
provided that
\begin{align*}
C_0 \leq \varepsilon_{5} \triangleq \min\left\{\varepsilon_{4,1}, \Big(\frac{\hat\rho}{4C}\Big)^{16}\right\}.
\end{align*}
As a consequence, the desired \eqref{cz2} follows from \eqref{cz} and \eqref{cz3}.
\end{proof}

\section{Proof of Theorem 1.1}\label{sec4}
\par In this section we use the {\it a priori} estimates showed in Section \ref{sec3} to complete the proof of Theorem \ref{thm:main}.

\begin{proof}[{\it Proof of Theorem \ref{thm:main}}]
Let \( \eta_{\delta}(x) \) be a standard mollifier with width \( \delta \). We define
\[
\rho^{\delta}_{0} = [\eta_{\delta}*(\rho_{0}1_{\Omega})]1_{\Omega}+\delta, \quad \rho_s^{\delta}= [\eta_{\delta}*(\rho_s  1_{\Omega})]1_{\Omega}, \quad \phi^{\delta}= [\eta_{\delta}* (\phi 1_{\Omega})]1_{\Omega},
\]
meanwhile, $u_0^{\delta}$ and $H_0^{\delta}$ are the unique smooth solutions to the following elliptic equations
\begin{align}
\begin{cases}
\Delta u_0^{\delta}=\Delta(\eta^{\delta}*u_0), &x\in \Omega,\\
u_0^{\delta}\cdot n=0, \ \curl u_0^{\delta}\times n=0, &x \in \partial\Omega.
\end{cases}\ \ \ \ \ \
\begin{cases}
\Delta H_0^{\delta}=\Delta(\eta^{\delta}*H_0), &x\in \Omega,\\
H_0^{\delta}\cdot n=0, \ \curl H_0^{\delta}\times n=0, &x \in \partial\Omega.
\end{cases}
\end{align}
Using an argument similar to \cite{chen}, we can obtain a global smooth solution \( (\rho^{\delta}, u^{\delta}, H^{\delta}) \) of \eqref{a1}--\eqref{a5} with the initial data \( (\rho^{\delta}_{0}, u^{\delta}_{0}, H^{\delta}_{0}) \) for all \( t > 0 \) uniformly in \( \delta \).

Fix $x\in\Omega$ and let $B_{R}$ be a ball of radius R centered at $x$. In view of Lemma \ref{lem:2.4}, \eqref{2.12}, and the Sobolev embedding theorem, we have
\begin{align}\label{4.1}
\langle u^{\delta}(\cdot, t) \rangle^{1/2} &\leq C \big(1 + \|\nabla u^{\delta}\|_{L^{6}}\big) \notag\\
&\leq C \big(1 + \|F^{\delta}\|_{L^{6}} + \|\omega^{\delta}\|_{L^{6}} + \|P^{\delta} - P(\rho_s^{\delta})\|_{L^{6}} + \|H^{\delta}\|_{L^{12}}^{2}\big) \notag\\
&\leq C\big(1+ \|\rho^{\delta}\dot u^{\delta}\|_{L^2}+ \|H^{\delta}\cdot\nabla H^{\delta}\|_{L^2}+ \|\nabla u^{\delta}\|_{L^2}+ \|\rho^{\delta}- \rho_s^{\delta}\|_{L^6} \notag\\
&\quad + \|\rho^{\delta}- \rho_s^{\delta}\|_{L^3}+ \|H^{\delta}\|_{L^4}^2+ \|\nabla u^{\delta}\|_{L^6}+ \|\rho^{\delta}- \rho_s^{\delta}\|_{L^6}^2+ \|H^{\delta}\|_{L^{12}}^2\big) \notag\\
&\leq C(\tau), \quad t \geqslant \tau > 0.
\end{align}
 Noting that
\begin{align}\label{4.2}
\left|u^{\delta}(x, t) - \frac{1}{|B_{R} \cap \Omega|} \int_{B_{R} \cap \Omega} u^{\delta}(y, t) \, dy\right|&= \left|\frac{1}{|B_{R}\cap\Omega|}\int_{B_{R}\cap\Omega}(u^{\delta}(x, t)- u^{\delta}(y, t)) dy\right| \notag\\
&\leq \frac{1}{|B_{R}\cap\Omega|}C(\tau)\int_{B_{R}\cap\Omega}|x- y|^{\f12} dy \notag\\
&\leq C(\tau) R^{\frac{1}{2}},
\end{align}
one deduces that, for $    0 < \tau \leq t_1 \leq t_2 < \infty$,
\begin{align}\label{4.3}
|u^{\delta}(x, t_{2}) - u^{\delta}(x, t_{1})| &\leq \frac{1}{|B_{R}\cap\Omega|} \int_{t_{1}}^{t_{2}} \int_{B_{R}\cap\Omega} |u^{\delta}_{t}(y, t)| \, dy \, dt + C(\tau) R^{\frac{1}{2}} \notag\\
&\leq C R^{-3/2} |t_{2} - t_{1}|^{1/2} \left( \int_{t_{1}}^{t_{2}} \int_{B_{R}\cap\Omega} |u^{\delta}_{t}(y, t)|^{2} \, dy \, dt \right)^{\frac{1}{2}} + C(\tau) R^{\frac{1}{2}} \notag\\
&\leq C R^{-3/2} |t_{2} - t_{1}|^{1/2} \left( \int_{t_{1}}^{t_{2}} \int_{B_{R}\cap\Omega} \left(|\dot{u}^{\delta}|^{2} + |u^{\delta}|^{2}|\nabla u^{\delta}|^{2}\right) \, dy \, dt \right)^{\frac12} + C(\tau) R^{\frac12}.
\end{align}
For any \( 0 < \tau \leq t_{1} < t_{2} < \infty \), we have
\begin{align*}
\int_{t_{1}}^{t_{2}} \int |\dot{u}^{\delta}|^{2} dx dt &\leq C \int_{t_{1}}^{t_{2}} \int \big(\rho^{\delta} |\dot{u}^{\delta}|^{2} + |\rho^{\delta} - \rho_s^{\delta}|^{2} |\dot{u}^{\delta}|^{2}\big) dx dt \notag\\
&\leq C(\tau ) + C \int_{t_{1}}^{t_{2}} \|\nabla \dot{u}^{\delta}\|_{L^{2}}^{2} \|\rho^{\delta} - \rho_s^{\delta}\|_{L^{3}}^{2} dt \notag\\
&\leq C(\tau ),
\end{align*}
and
\begin{align}
\int_{t_{1}}^{t_{2}} \int |u^{\delta}|^{2} |\nabla u^{\delta}|^{2} \, dx \, dt &\leq C \sup_{t_1 \leq t \leq t_2} \|u^{\delta}\|_{L^{\infty}}^{2} \int_{t_{1}}^{t_{2}} \|\nabla u^{\delta}\|_{L^{2}}^{2} \, dt \notag\\
&\leq C \sup_{t_1 \leq t \leq t_2}\|u^{\delta}\|_{L^2}^{\f12}\|\nabla u^{\delta}\|_{L^6}^{\f32}\int_{t_1}^{t_2}\|\nabla u^{\delta}\|_{L^2}^2 dt\notag\\
&\leq C(\tau ).\notag
\end{align}
Thus, we infer from \eqref{4.3} that
\[
|u^{\delta}(x, t_{2}) - u^{\delta}(x, t_{1})| \leq C(\tau) R^{-\frac32} |t_{2} - t_{1}|^{\frac12} + C(\tau) R^{\frac12}.
\]
Choosing \( R = |t_{2} - t_{1}|^{\frac14} \), we obtain that
\begin{equation}
|u^{\delta}(x, t_{2}) - u^{\delta}(x, t_{1})| \leq C(\tau) |t_{2} - t_{1}|^{\frac18}, \quad 0 < \tau \leq t_{1} < t_{2} < \infty. \label{4.4}
\end{equation}

The estimates \eqref{4.1}--\eqref{4.4} also hold for the magnetic field \( H^{\delta} \). Hence, \( \{u^{\delta}\} \) and \( \{H^{\delta}\} \) are uniformly H{\"o}lder continuous away from \( t = 0 \). By the Ascoli--Arzel$\grave{a}$ theorem, there is a subsequence $\delta_k\rightarrow0$ satisfying
\begin{equation}
u^{\delta_k} \to u, \quad H^{\delta_k} \to H \quad \text{uniformly on compact sets in } \Omega \times (0, \infty).
\end{equation}
Moreover, similar to the argument in \cite{PL}, we can extract a further subsequence $\delta_{k'}\rightarrow0$ such that
\begin{equation}
\rho^{\delta_{k'}} \to \rho \quad \text{strongly in } L^{p}(\Omega \times (0, \infty)), \quad \forall p \in [2, \infty).
\end{equation}

Passing to the limit as \( \delta_{k'} \to 0 \), we obtain the limiting function \( (\rho, u, H) \), which is a global weak solution of \eqref{a1}--\eqref{a5} in the sense of Definition \ref{def:weak-solution} and satisfies \eqref{3.4}. The large-time behavior of $(\rho, u, H)$ as given in \eqref{1.21} is a direct consequence of the uniform bounds established in Section \ref{sec3}, and its proof follows a similar line of reasoning to that in \cite{huang}. The proof of Theorem \ref{thm:main} is thus complete.
\end{proof}

\section*{Conflict of interest}
The authors have no conflicts to disclose.

\section*{Data availability}
No data was used for the research described in the article.


\end{document}